\newcommand{\Ker}{\mathrm{Ker}}
\newcommand{\de}{\mathrm{d}}                        
\newcommand{\N}{\ensuremath{\mathds{N}}\xspace}     
\newcommand{\R}{\ensuremath{\mathds{R}}\xspace}     
\newcommand{\prodot}[1]
    {\ensuremath{\left\langle#1\right\rangle}}      
\newcommand{\Hi}{\mathcal{H}}
\newcommand{\K}{\mathcal{K}}
\newcommand{\spa}{\mathrm{span}\,}
\newtheorem{theorem}{Theorem}[section]
\newtheorem{corollary}[theorem]{Corollary}
\newtheorem{lemma}[theorem]{Lemma}
\newtheorem{proposition}[theorem]{Proposition}
\theoremstyle{definition}
\newtheorem{definition}[theorem]{Definition}
\theoremstyle{remark}
\newtheorem{remark}[theorem]{Remark}
\title[Pseudo focal points]{Pseudo focal points along Lorentzian geodesics and Morse index}
\author[M. A. Javaloyes]{Miguel \'{A}ngel Javaloyes}
\address{Departamento de Geometr\'{\i}a y Topolog\'{\i}a.\hfill\break\indent
 Facultad de Ciencias, Universidad de Granada.\hfill\break\indent
 Campus Fuentenueva s/n, 18071 Granada, Spain}
 \email{majava@ugr.es}
\author[A. Masiello]{Antonio Masiello}
\address{Dipartimento di Matematica,\hfill\break\indent
Politecnico di Bari, \hfill\break\indent Via Orabona 4,
70125, Bari, Italy}
\email{masiello@poliba.it}
\author[P. Piccione]{Paolo Piccione}
\address{Departamento de Matem\'{a}tica,\hfill\break\indent
Universidade de S\~ao Paulo, \hfill\break\indent Rua do Mat\~ao 1010,
S\~ao Paulo, Brasil}
\email{piccione@ime.usp.br}
\date{April 18th, 2008}
\subjclass[2000]{53B30, 53C22, 58E05}
\keywords{Geodesics, Lorentzian manifolds, Morse index theorem}
\thanks{First author was partially supported  by Regional J.
Andaluc\'{\i}a Grant P06-FQM-01951, by Fundaci\'on S\'eneca project 04540/GERM/06   and by Spanish MEC Grant MTM2007-64504. The second author is supported by M.I.U.R. Research project PRIN07 "Metodi Variazionali e  
Topologici nello Studio di Fenomeni Nonlineari"
The third author is sponsored by Capes, Brazil, grant BEX 1509/08-0.}
\begin{document}
\begin{abstract}
Given a Lorentzian manifold $(M,g)$, a  geodesic $\gamma$ in $M$ and
a timelike Jacobi field $\mathcal Y$ along $\gamma$, we introduce a special class of instants along $\gamma$ that we call $\mathcal Y$-pseudo conjugate
(or focal relatively to some initial orthogonal submanifold).
We prove that the $\mathcal Y$-pseudo conjugate instants form a finite set, and their number equals the
Morse index of  (a suitable restriction of) the index form.
This gives a Riemannian-like Morse index theorem.
As special cases of the theory, we will consider geodesics in stationary
and static Lorentzian manifolds, where the Jacobi field $\mathcal Y$ is obtained as the
restriction of a globally defined timelike Killing vector field.
\end{abstract}

\maketitle

\begin{section}{Introduction}
In the last years, there have been several attempts of stating a Morse index  theorem for stationary Lorentzian manifolds.
Starting from the original results in  \cite{BGM98,BenMas92,GiMa95},
and aiming at establishing Morse theoretical results, several authors have studied the relations
between conjugate instants along a geodesic and its index form.
With the development of new functional analytical and symplectic techniques, it has appeared naturally
that the classical Riemannian statement of the theorem would not hold in the non positive definite case.
In first place, it is easy to prove that, unless the geodesic is Riemannian, the
index of its index form is always infinite. On the other hand, the conjugate instants along a semi-Riemannian geodesic,
unlike the Riemannian case,  may accumulate. As a matter of fact, there are several pathological examples where the set of conjugate
instants can be arbitrarily complicated (see \cite{PiTa03}).
In order to obtain a meaningful statement of the Morse index theorem, one has to replace the notion
of Morse index with the more general notion of \emph{spectral flow}, which is an integer number associated
to a continuous path of Fredholm symmetric bilinear forms. Moreover, the count of the conjugate instants has
to be interpreted as a suitable intersection number in the Grassmannian of all Lagrangian subspaces
in a finite dimensional symplectic space; this number is called \emph{Maslov index}.
The more general semi-Riemannian Morse index theorem (see for instance \cite{PiTa02}) states that,
given a semi-Riemannian manifold $(M,g)$ and a geodesic $\gamma:[0,1]\to M$, the spectral flow of
the paths of symmetric forms $I_t$, $t\in\left]0,1\right]$, obtained as restriction of the index
form of $\gamma$ to the set of variational vector fields along $\gamma\vert_{[0,t]}$, equals the
Maslov index of $\gamma$ up to the sign.

However, in the case of stationary Lorentzian manifolds, an alternative variational principle
is known for geodesics; among the main advantages, one proves that each one of its critical
points has finite index and, once again, its value equals the Maslov index of the corresponding geodesic.
This alternative variational principle will be described with more details below.
It is not known whether the set of conjugate instants along a given geodesic is discrete
in the stationary case. A very natural conjecture would be that, under the stationarity assumption,
conjugate instants do not accumulate, and that
the Maslov index of a geodesic is equal to their number, counted with multiplicity.
This conjecture still remains an open problem, although it has been proven to hold in some special cases.
For instance, in \cite{JaPi06a} the authors prove that this is true in the case
of semi-Riemannian Lie groups, endowed with a left-invariant metric, whose dimension is less than or equal to $5$.
Other than this special example, basically nothing is known concerning the distribution of conjugate instants along a geodesic
in a stationary manifold; the purpose of the present paper is to investigate in this direction.
\smallskip

In \cite{BenMas92} the authors establish a Riemannian-like Morse index theorem in a
static Lorentzian manifold by considering a functional on the Riemannian base. Due to a technical
gap in the proof, the result holds only under additional assumptions, although no counterexample to
their general statement has been found so far. Recently, the more general case
of stationary Lorentzian manifolds has been considered (see \cite{GiaMasPic01,GiaPic99}).
The central idea is to consider the energy functional restricted to the set of curves $\gamma:I\to M$
satisfying the natural constraint $g(\dot\gamma,\mathcal Y)=C_\gamma$, where $C_\gamma$ is a constant depending on
$\gamma$, $g$ is the Lorentzian metric on the stationary spacetime $M$ and $\mathcal Y$ is a timelike Killing
field in $(M,g)$. Such restriction has the same critical points as the original geodesic action
functional, but its second variation is \emph{essentially positive} at each critical
point. Thus, one has finite Morse index, and in \cite{GiaMasPic01} it is proven that this index
is equal to the Maslov index.\smallskip

The main goal of this paper is to study in more detail the distribution
of conjugate or focal instants, and to formulate a Riemannian-like Morse index theorem. Rather than restricting
to the fixed endpoints case, we will consider the more general case of geodesics whose endpoints
are free to vary along two given smooth submanifolds.
Our central result is the introduction of the class of $\mathcal Y$-pseudo conjugate, or $(\mathcal P,\mathcal Y)$-pseudo focal instants related to the choice of a timelike Jacobi field $\mathcal Y$;  these instants form a discrete set,
and they carry all the information about the second variation of the geodesic action functional up to a
correction term which is either null or equal to $1$. Although the notion of pseudo conjugate/focal point depends
on the (existence and the) choice of an everywhere timelike Jacobi field, in some specific situations
there is a canonical choice. This is the case, for instance, in stationary Lorentzian manifolds with a distinguished
timelike Killing vector field, which is the standard example we will refer to. 

Let us describe more precisely our result.
Consider a Lorentzian manifold $(M,g)$,  two smooth nondegenerate submanifolds $\mathcal P,\mathcal Q\subset M$ and
a geodesic $\gamma:[0,1]\to M$ with $\gamma(0)\in\mathcal P$, $\gamma(1)\in\mathcal Q$,
$\dot\gamma(0)\in T_{\gamma(0)}\mathcal P^\perp$ and $\dot\gamma(1)\in T_{\gamma(1)}\mathcal Q^\perp$. We will
call such a geodesic $\{\mathcal P,\mathcal Q\}$-orthogonal; let $\mathcal Y$ be a timelike Jacobi field along $\gamma$, for instance,
if $(M,g)$ is stationary, $\mathcal Y$ can be taken to be the restriction to $\gamma$ of a globally defined
timelike Killing vector field.
We will say that $\mathcal Y$ is \emph{admissible} (Definition~\ref{thm:defadmissible}) if $\mathcal Y(0)$ and the covariant derivative $\mathcal Y'(0)$ are  linearly independent. When the geodesic is spacelike or lightlike, given a $\mathcal Y$ non admissible, we can obtain an admissible timelike Jacobi field by perturbating the first one (see Lemma~\ref{perturbation}); on the other hand, $\mathcal Y$ will be called \emph{singular} if $\mathcal Y$ and $\mathcal Y'$ are everywhere
pointwise linearly dependent. For instance, if $(M,g)$ is static, and $\gamma$ is a geodesic orthogonal
to the static Killing vector field $\mathcal Y$, then the restriction of $\mathcal Y$ to $\gamma$ is singular.

A $({\mathcal P,\mathcal Y})$-pseudo Jacobi field $J$ is a smooth vector field along the geodesic $\gamma$
satisfying $g(J',\mathcal Y)-g(J,\mathcal Y')=0$ on $[0,1]$, such that
the vector field $J''-R[J]$ is a linear combination of $\mathcal Y$ and $\mathcal Y'$
and it satisfies certain initial conditions (see Definition~\ref{pseudo}). 
A $(\mathcal P,\mathcal Y)$-pseudo focal instant $t_0\in\left]0,1\right]$ is an 
instant such that there exists a $(\mathcal P,\mathcal Y)$-pseudo Jacobi field $J$ 
along $\gamma$ with $J(t_0)=0$. 
In this situation, one can consider the space $\mathcal H_0^{\{\gamma,\mathcal P,\mathcal Q\}}$
of all variational vector fields $V$ along $\gamma$ satisfying $V(0)\in T_{\gamma(0)}\mathcal P$,
$V(1)\in T_{\gamma(0)}\mathcal Q$ and the \emph{linear} constraint $g(V',\mathcal Y)-g(V,\mathcal Y')=0$ (we will suppress $\mathcal Q$ in the notations when $\mathcal Q$ reduces to a point); these correspond to variations of $\gamma$ by a smooth
family $\gamma_s$, $s\in\left]-\varepsilon,\varepsilon\right[$, by curves with $\gamma_s(0)\in\mathcal P$, 
$\gamma_s(1)\in\mathcal Q$ and such that the quantity $g(\dot\gamma_s,\mathcal Y)$ is constant and equal to the constant $C_\gamma=g(\dot\gamma,\mathcal Y)$
for all $s$. Such space has codimension one in the space $\mathcal H_*^{\{\gamma,\mathcal P,\mathcal Q\}}$ of all vector fields $V$ satisfying the more
general \emph{affine} constraint $g(V',\mathcal Y)-g(V,\mathcal Y')=\textrm{constant}$.

Consider the index form
$I_{\{\gamma,\mathcal P,\mathcal Q\}}$ given by the second variation at $\gamma$ of the geodesic action functional on the space
of curves with initial endpoint $\gamma(0)$ in $\mathcal P$ and final endpoint $\gamma(1)$ in $\mathcal Q$.
The difference between the indexes of the restrictions of $I_{\{\gamma,\mathcal P,\mathcal Q\}}$ to
the spaces $\mathcal H_0^{\{\gamma,\mathcal P,\mathcal Q\}}\times \mathcal H_0^{\{\gamma,\mathcal P,\mathcal Q\}}$ and
$\mathcal H_*^{\{\gamma,\mathcal P,\mathcal Q\}}\times \mathcal H_*^{\{\gamma,\mathcal P,\mathcal Q\}}$, which is at most one, is an invariant of the geodesic $\gamma$, that will be denoted
by $\epsilon_{\{\gamma,\mathcal P,\mathcal Q\}}$. It is an intriguing question to determine which geodesics have non vanishing
$\epsilon_{\{\gamma,\mathcal P,\mathcal Q\}}$, and how this fact affects the distribution of $\mathcal P$-focal instants
along $\gamma$. As a special example, we will consider the case of geodesics in static manifolds,
i.e., stationary manifolds whose Killing field $\mathcal Y$ has integrable orthogonal distribution.
In this case, each integral leaf of $\mathcal Y^\perp$ is a totally geodesic submanifold of $M$,
and those geodesics that are contained in one such integral submanifold have a purely
Riemannian behavior.

The main results of the paper are the following. First, we show that $(\mathcal P,\mathcal Y)$-pseudo focal
instants are related with the kernel of the restriction of the index form (Proposition~\ref{thm:prop2.6}).
The $(\mathcal P,\mathcal Y)$-pseudo focal instants form a finite
set, and their number equals the index of the restriction of  $I_{\{\gamma,\mathcal P\}}$ to
$\mathcal H_0^{\{\gamma,\mathcal P\}}\times \mathcal H_0^{\{\gamma,\mathcal P\}}$ (Morse index theorem,
Theorem~\ref{morsegeodesic}).
When we consider $I_{\{\gamma,\mathcal P,\mathcal Q\}}$ defined in
$\mathcal H_0^{\{\gamma,\mathcal P,\mathcal Q\}}\times \mathcal H_0^{\{\gamma,\mathcal P,\mathcal Q\}}$, we must add to the number of $(\mathcal P,\mathcal Y)$-pseudo focal instants, the index of a certain  symmetric bilinear form defined on a finite dimensional subspace. Moreover, in the singular
case the Morse index theorem holds in a stronger sense, in that the restrictions of
$I_{\{\gamma,\mathcal P\}}$ to $\mathcal H_*^{\{\gamma,\mathcal P\}}\times \mathcal H_*^{\{\gamma,\mathcal P\}}$ and to $\mathcal H_0^{\{\gamma,\mathcal P\}}\times \mathcal H_0^{\{\gamma,\mathcal P\}}$ have the same index,
i.e., $\epsilon_{\{\gamma,\mathcal P\}}=0$ (Theorem~\ref{morsesingulargeodesic}). The last result is applied to horizontal geodesics in static manifolds in Proposition~\ref{staticcase}.
A discussion on the distribution of pseudo focal and focal points along a geodesic is discussed in Section~\ref{sec:distribution}.
\smallskip

The proof of the main results is obtained by functional analytical techniques, involving the study of the
nullity and the variation of the index for a smooth family of Fredholm bilinear forms with varying domains.
Establishing the smoothness of the domains is a surprisingly non trivial fact (Proposition~\ref{c1}),
complicated by the occurrence of the singular case.
The kernel of the restriction of the index form $I_{\{\gamma,\mathcal P\}}$ to $\mathcal H_0^{\{\gamma,\mathcal P\}}\times \mathcal H_0^{\{\gamma,\mathcal P\}}$ is studied
in Section~\ref{sec:diffeq}. In order to get the Morse index theorem, in Section~\ref{abstractMorse} we prove
an abstract Morse Index Theorem in the spirit of \cite{Uhl73} (see also \cite{FrTh83,FrTh90}). As to the plethora of
abstract Morse index theorems appearing in the literature, few remarks are in order.
When dealing with a family of closed subspaces, it is customary to make two assumptions:
\begin{itemize}
\item \emph{monotonicity} of the family, to guarantee monotonicity of the index function;
\item \emph{continuity} of the family, to guarantee the semi-continuity of the index function.
\end{itemize}
These two assumptions are not totally independent; for instance, monotonicity is not compatible with
continuity in the norm operator topology (see Definition~\ref{thm:defcontfam} and Lemma~\ref{thm:lemcontinuity}). For the result aimed in this paper,
we cannot apply directly \cite[Theorem 1.11]{Uhl73}, because we cannot guarantee any kind of continuity
for our monotonic family of closed subspaces; however,  continuity in the norm operator topology is obtained
by considering a family of deformations (more precisely \emph{reparameterizations}, see Proposition~\ref{c1}) of the subspaces, but this operation
does not preserve monotonicity. The abstract index theorem proved here, Proposition~\ref{thm:MIT}, deals with this situation.

The authors gratefully acknowledge an important contribution to the final
version of the paper given by the anonymous referee, who pointed out a
mistake contained in the original version of the manuscript.
\end{section}
\begin{section}{An abstract Morse index theorem}\label{abstractMorse}
The main result of this section (Proposition~\ref{thm:MIT}) gives an abstract version of the Morse
index theorem for continuous families of bounded symmetric bilinear forms on varying domains.
Very likely, some of the preliminary results are already known in the literature, but for the reader's
convenience we give complete proofs of every statement. Basic bibliography for the topics
of this section are the classical textbooks \cite{Brezis83,Kato76}.

Let $H$ be a (real) Hilbert space, with inner product $\langle\cdot,\cdot\rangle$.
A bounded symmetric bilinear form $B:H\times H\to\R$ is said to be Fredholm if
it is represented by a (self-adjoint) Fredholm operator $T:H\to H$, i.e., $B=\langle T\cdot,\cdot\rangle$.
Note that the operator that represents $B$ depends on the choice of the inner product,
but the notion of Fredholmness does not.
A symmetric Fredholm bilinear form is \emph{nondegenerate} if $\Ker(B)=\{x\in H:B(x,y)=0\ \forall y\in H\}=\Ker(T)$
is trivial; this implies that $T$ is an isomorphism.
Observe that $\Ker(B)$ is finite dimensional if $B$ is Fredholm.
A subspace $Z\subset H$ is \emph{$B$-isotropic}
(or simply isotropic) if $B\vert_{Z\times Z}$ is null.
Given a self-adjoint Fredholm operator
$T$, there exists an orthogonal decomposition:
\[H=V^{-}(T)\oplus\Ker(T)\oplus V^+(T)\]
into $T$-invariant closed subspaces such that $B=\langle T\cdot,\cdot\rangle$ is
negative definite (resp., positive definite) on $V^-(T)$ (resp., on $V^+(T)$).
The \emph{index} of $B=\langle T\cdot,\cdot\rangle$ denoted by $\mathrm n_-(B)$, is the dimension of $V^{-}(T)$;
equivalently, $\mathrm n_-(B)$ is the dimension of a maximal subspace of $H$ on which $B$ is negative definite.
Observe that if $Z$ is an isotropic subspace, then $Z\cap V^-(T)=Z\cap V^+(T)=\{0\}$.
If $X\subset H$ is a subspace, we set:
\[X^{\perp_B}=\big\{y\in H:B(x,y)=0\ \forall\,x\in X\big\};\]
assume that $X$ is closed, then if $B$ and $B\vert_{X\times X}$ are nondegenerate,  $B\vert_{X^{\perp_B}\times X^{\perp_B}}$ is nondegenerate,
and $H=X\oplus X^{\perp_B}$. In this case:
\[\mathrm n_-(B)=\mathrm n_-\big(B\vert_{X\times X}\big)+\mathrm n_-\big(B\vert_{X^{\perp_B}\times X^{\perp_B}}\big).\]
\begin{lemma}\label{thm:lem1}
Let $B$ be a Fredholm bilinear form and $Z\subset H$ be a $B$-isotropic subspace such that $Z\cap\Ker(B)=\{0\}$.
Then, $\mathrm n_-(B)\ge\mathrm{dim}(Z)$.
\end{lemma}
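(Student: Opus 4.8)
The plan is to show that the orthogonal projection of $Z$ onto the negative eigenspace of (the operator representing) $B$ is injective; the dimension inequality is then immediate. Write $B=\langle T\cdot,\cdot\rangle$ for a self-adjoint Fredholm operator $T$ and use the orthogonal decomposition $H=V^{-}(T)\oplus\Ker(T)\oplus V^+(T)$ into $T$-invariant closed subspaces recalled above, together with the identity $\Ker(T)=\Ker(B)$. Let $P^-\colon H\to V^{-}(T)$ be the orthogonal projection. The claim to prove is that $P^-\vert_Z$ is injective.

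To establish the claim I would take $z\in Z$ with $P^-(z)=0$ and write $z=z_0+z_+$ with $z_0\in\Ker(T)$ and $z_+\in V^+(T)$. Since the three summands of the decomposition are mutually orthogonal and $T$-invariant, and $Tz_0=0$, the mixed terms drop out and one gets $B(z,z)=\langle Tz,z\rangle=\langle Tz_+,z_+\rangle$; because $B$ is positive definite on $V^+(T)$, this quantity is $\ge 0$ and vanishes only when $z_+=0$. But $z$ lies in the $B$-isotropic subspace $Z$, so $B(z,z)=0$; hence $z_+=0$ and therefore $z=z_0\in\Ker(T)=\Ker(B)$. The hypothesis $Z\cap\Ker(B)=\{0\}$ then forces $z=0$, which proves injectivity. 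Finally, an injective linear map $P^-\vert_Z\colon Z\to V^{-}(T)$ yields $\mathrm{dim}(Z)\le\mathrm{dim}\big(V^{-}(T)\big)=\mathrm n_-(B)$, the inequality being vacuous in the case $\mathrm n_-(B)=+\infty$.

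I do not expect a serious obstacle: the argument is essentially a linear-algebraic observation and its only delicate point is the bookkeeping showing that the $V^{-}(T)$-component and the cross terms $\langle Tz_+,z_0\rangle$ contribute nothing to $B(z,z)$, which is exactly where the orthogonality and $T$-invariance of the three pieces are used. This may be viewed as a sharpening of the elementary fact, noted just before the statement, that an isotropic subspace meets $V^{-}(T)$ and $V^+(T)$ trivially: here we use instead that any vector of $Z$ with vanishing $V^{-}(T)$-component must already lie in $\Ker(B)$.
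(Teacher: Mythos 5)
Your proof is correct and rests on the same core idea as the paper's: project $Z$ onto $V^-(T)$ and use isotropy plus $Z\cap\Ker(B)=\{0\}$ to conclude that the projection is injective. The one small difference is procedural: the paper first reduces to the case $\Ker(B)=\{0\}$ by passing to the quotient $H/\Ker(B)$, and only then observes that $\dim(Z)>\dim\big(V^-(T)\big)$ would force $Z\cap V^+(T)\ne\{0\}$, contradicting isotropy; you instead absorb the kernel into the computation directly, writing $z=z_0+z_+$ and showing $B(z,z)=\langle Tz_+,z_+\rangle$, which makes the injectivity of $P^-\vert_Z$ explicit without a quotient step. Both arguments are sound and of the same difficulty; yours is marginally more self-contained since it does not appeal to the (routine but unproved) facts that the induced form on the quotient is Fredholm and has the same index.
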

\begin{proof}
First, we observe that we can assume $\Ker(B)=\{0\}$. Namely, should this not be the case, one can consider
the quotient $\overline H=H/\Ker(B)$, endowed with the induced Fredholm bilinear form $\overline B$, that has
the same index as $B$. If $\pi:H\to\overline H$ is the projection, since $Z\cap\Ker(B)=\{0\}$, then setting
$\overline Z=\pi(Z)$, we get a $\overline B$-isotropic subspace of $\overline H$ with the same dimension
as $Z$. This shows that it suffices to consider the case that $\Ker(B)=\{0\}$.

Assume $\Ker(B)=\{0\}$; consider the representative $T$ of $B$ and the decomposition $H=V^-(T)\oplus V^+(T)$.
If $\mathrm{dim}(Z)>\mathrm{dim}\big(V^-(T)\big)$, then it would be $Z\cap V^+(T)\ne\{0\}$, which contradicts the assumption
that $Z$ is isotropic. This concludes the proof.
\end{proof}
Let us now prove the following:
\begin{proposition}\label{thm:prop2}
 Let $X\subset H$ be a closed subspace, and let $B$ be a Fredholm
bilinear form on $H$. Assume that $X\cap\Ker(B)=\{0\}$;
then: \[\mathrm n_-(B)\ge\mathrm n_-\big(B\vert_{X\times X}\big)+\mathrm{dim}\big[\Ker\big(B\vert_{X\times X}\big)\big].\]
\end{proposition}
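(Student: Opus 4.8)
The plan is to exhibit a finite-dimensional subspace $W\subset H$ with $\dim(W)=\mathrm n_-\big(B\vert_{X\times X}\big)+\dim\big[\Ker\big(B\vert_{X\times X}\big)\big]$ on which $B$ is negative definite; since $\mathrm n_-(B)$ is the dimension of a maximal subspace of $H$ on which $B$ is negative definite, this gives the inequality at once. If $\mathrm n_-(B)=+\infty$ there is nothing to prove, so assume $\mathrm n_-(B)<+\infty$. Write $Z_0=\Ker\big(B\vert_{X\times X}\big)$: this is a $B$-isotropic subspace with $Z_0\cap\Ker(B)\subseteq X\cap\Ker(B)=\{0\}$, so Lemma~\ref{thm:lem1} forces $d_0:=\dim(Z_0)<+\infty$. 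Pick a maximal subspace $V^-\subseteq X$ on which $B$ is negative definite; it is also negative definite in $H$, so $d^-:=\dim(V^-)=\mathrm n_-\big(B\vert_{X\times X}\big)\le\mathrm n_-(B)<+\infty$. Since each $z\in Z_0$ satisfies $B(z,x)=0$ for all $x\in X$, we have $B(v,z)=0$ for $v\in V^-$, $z\in Z_0$; in particular $V^-\cap Z_0=\{0\}$ and $B$ is negative semidefinite on $V^-\oplus Z_0$ with radical exactly $Z_0$. By compactness of the unit sphere of the finite-dimensional space $V^-$ there is $c>0$ with $B(v,v)\le -c\,\norm{v}^2$ on $V^-$.

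The idea is now to perturb $Z_0$ slightly out of $X$ so as to make $B$ strictly negative on it, without destroying the negativity coming from $V^-$. For this I would first set up a dual system: since $Z_0\cap\Ker(B)=\{0\}$, the bounded linear map $\beta\colon H\to Z_0^{\ast}$, $\beta(h)(z)=B(z,h)$, has image whose annihilator inside the (finite-dimensional, hence reflexive) space $Z_0$ is $Z_0\cap\Ker(B)=\{0\}$, so $\beta$ is onto. Fixing a basis $z_1,\dots,z_{d_0}$ of $Z_0$, choose $w_1,\dots,w_{d_0}\in H$ with $B(z_i,w_j)=\delta_{ij}$. For $\eps>0$ put $\tilde z_i=z_i-\eps\,w_i$ and $W_\eps=V^-+\spa\{\tilde z_1,\dots,\tilde z_{d_0}\}$. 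Because linear independence is an open condition and a basis of $V^-$ together with $z_1,\dots,z_{d_0}$ is linearly independent (direct sum $V^-\oplus Z_0$), we get $\dim(W_\eps)=d^-+d_0$ for $\eps$ small.

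It then remains to verify that $B$ is negative definite on $W_\eps$ for $\eps$ small. For $u=v+\tilde z\in W_\eps$ with $v\in V^-$ and $\tilde z=\sum_i a_i\tilde z_i=z-\eps w$, where $z=\sum_i a_iz_i\in Z_0$ and $w=\sum_i a_iw_i$, one uses $B(v,z)=0$, $B(z,z)=0$, $B(z,w)=\sum_i a_i^2=\abs{a}^2$, boundedness of $B$, and $\norm{w}\le C\abs{a}$ to obtain
\[B(u,u)\;\le\;-c\,\norm{v}^2+2\eps\,\norm{B}\,C\,\norm{v}\,\abs{a}-\eps\big(2-\eps\,\norm{B}\,C^2\big)\abs{a}^2,\]
which is a quadratic form in $(\norm{v},\abs{a})$ whose determinant equals $\eps\big(2c-\eps\,\norm{B}\,C^2(c+\norm{B})\big)$, positive for $\eps$ small, with negative $(1,1)$-entry; hence the form is negative definite, so $B(u,u)<0$ unless $\norm{v}=\abs{a}=0$, i.e. $v=0$ and $\tilde z=0$, i.e. $u=0$. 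Thus $B$ is negative definite on the $(d^-+d_0)$-dimensional space $W_\eps$, and the proposition follows.

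The only genuinely delicate point is this last estimate: the cross term coming from $B(v,w)$ and the term $\eps^2B(w,w)$ are unavoidable once one perturbs $Z_0$, and $\eps$ must be chosen small enough that neither overwhelms the negative gain $-2\eps\abs{a}^2$, uniformly over $W_\eps$; reducing the question to the sign of a fixed $2\times2$ symmetric matrix in the variables $\norm{v}$ and $\abs{a}$ is what makes the uniformity transparent. (Note that the statement genuinely generalizes Lemma~\ref{thm:lem1}, to which it reduces when $X$ is itself $B$-isotropic.)
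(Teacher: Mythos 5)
Your argument is correct, but it follows a genuinely different route from the paper's. The paper reduces to $\Ker(B)=\{0\}$, picks a maximal negative definite $V\subset X$, uses the $B$-orthogonal decomposition $H=V\oplus V^{\perp_B}$ together with additivity of the index, and then applies Lemma~\ref{thm:lem1} directly to the isotropic subspace $\Ker(B\vert_{X\times X})\subset V^{\perp_B}$ with the (nondegenerate, Fredholm) form $B\vert_{V^{\perp_B}\times V^{\perp_B}}$ — three lines once that machinery is in place. You instead \emph{construct} a negative definite subspace of the required dimension by hand: you use $Z_0\cap\Ker(B)=\{0\}$ to get surjectivity of $h\mapsto B(\,\cdot\,,h)\vert_{Z_0}$, pull back the dual basis to vectors $w_i$, and then push the $z_i$ slightly in the $w_i$-directions, turning isotropy into strict negativity at the linear order in $\eps$; the $2\times2$ Sylvester criterion in $(\norm{v},\abs{a})$ controls the uniformity. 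Your route avoids invoking the $B$-orthogonal splitting and index additivity (only using Lemma~\ref{thm:lem1} for the finiteness $\dim Z_0<\infty$, and even that could be replaced by the trivial ``$\mathrm n_-(B)=\infty$'' escape), at the price of an explicit but somewhat delicate quadratic estimate; the paper's version buys brevity from the abstract decomposition lemmas it has already proved. Both are valid, and your observation that Proposition~\ref{thm:prop2} reduces to Lemma~\ref{thm:lem1} when $X$ is itself isotropic is a nice sanity check on the construction.
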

\begin{proof}
As in Lemma~\ref{thm:lem1}, we can assume $\Ker(B)=\{0\}$. Let $V\subset X$ be a maximal subspace on which
$B\vert_{X\times X}$ is negative definite, so that $\mathrm n_-(B\vert_{X\times X})=\mathrm{dim}(V)$, $B\vert_{V^{\perp_B}\times V^{\perp_B}}$
is nondegenerate and $H=V\oplus V^{\perp_B}$. Clearly, the kernel $\Ker\big(B\vert_{X\times X}\big)$ is an isotropic subspace of $V^{\perp_B}$,
thus, by Lemma~\ref{thm:lem1}:
\begin{multline*}\mathrm n_-(B)=\mathrm n_-\big(B\vert_{V\times V}\big)+\mathrm n_-\big(B\vert_{V^{\perp_B}\times V^{\perp_B}}\big)=
\mathrm{dim}(V)+\mathrm n_-\big(B\vert_{V^{\perp_B}\times V^{\perp_B}}\big)\\ \ge\mathrm{dim}(V)+\mathrm{dim}\big[\Ker\big(B\vert_{X\times X}\big)\big].
\end{multline*}
This concludes the proof.
\end{proof}
We will denote by $\mathcal L(H)$ the algebra of all bounded linear operators on $H$.
The \emph{Grassmannian} $\mathcal G(H)$ of all closed subspaces of $H$, endowed with the distance
$\mathrm{dist}(X,Y)=\Vert P_X-P_Y\Vert$, is a complete metric space, where $P_Z:H\to H$ denotes the orthogonal projection onto $Z\in\mathcal G(H)$
and $\Vert\cdot\Vert$ is the operator norm.

\begin{definition}\label{thm:defcontfam}
A family $\{H_s\}_{s\in[a,b]}$ of closed subspaces of $H$ is said to be a \emph{continuous
family of closed subspaces} if the map $[a,b]\ni s\mapsto H_s\in\mathcal G(H)$ is continuous.
\end{definition}
Weaker notions of continuity may also be considered (see Appendix~\ref{app:A}).

Given a projection\footnote{By a \emph{projection}, we mean an operator $P\in\mathcal L(H)$
such that $P^2=P$; by an \emph{orthogonal projection} we mean a self-adjoint projection.} $P\in\mathcal L(H)$,
we will denote by $\mathrm{Im}(P)$ the image $P(H)$, which is a closed subspace of $H$. The following lemma can be found in \cite[Lemma 4.7]{BenPicpre}.
\begin{lemma}\label{thm:lemprojprox}
Let $P,Q$ be  projections in $\mathcal L(H)$ with $\Vert P-Q\Vert<1$.
Then, the restriction $\widetilde P:\mathrm{Im}(Q)\to\mathrm{Im}(P)$ of $P$ is an isomorphism.
\end{lemma}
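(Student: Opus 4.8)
The plan is to reduce the statement to the invertibility of a single auxiliary operator that \emph{symmetrizes} $P$ and $Q$. I would set
\[
R := PQ + (I-P)(I-Q), \qquad R' := QP + (I-Q)(I-P),
\]
both in $\mathcal L(H)$. The virtue of $R$ is that it respects the two range/kernel splittings: if $x\in\mathrm{Im}(Q)$ then $(I-Q)x=0$, so $Rx=PQx=Px$; thus $R$ carries $\mathrm{Im}(Q)$ into $\mathrm{Im}(P)$ and its restriction there is \emph{exactly} $\widetilde P$. Dually, if $k\in\Ker(Q)$ then $Rk=(I-P)k$, and $P(I-P)k=0$, so $R$ carries $\Ker(Q)$ into $\Ker(P)$. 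Since $P$ and $Q$ are idempotents we have $H=\mathrm{Im}(Q)\oplus\Ker(Q)=\mathrm{Im}(P)\oplus\Ker(P)$, so once $R$ is known to be an isomorphism of $H$ the lemma will follow.

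The heart of the argument is the algebraic identity
\[
RR' \;=\; R'R \;=\; I-(P-Q)^2,
\]
verified by a direct expansion using $Q(I-Q)=(I-Q)Q=0$, $(I-Q)^2=I-Q$ and the symmetric relations for $P$; note that $R'$ is obtained from $R$ by interchanging $P$ and $Q$ while $(P-Q)^2$ is invariant under this interchange, so it suffices to expand one of the two products. Since $\|(P-Q)^2\|\le\|P-Q\|^2<1$, the operator $G:=I-(P-Q)^2$ is invertible by the Neumann series; then $RR'=R'R=G$ exhibits $R'G^{-1}$ as a right inverse and $G^{-1}R'$ as a left inverse of $R$, so $R$ is invertible with $R^{-1}=G^{-1}R'=R'G^{-1}$.

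It then remains to deduce that $\widetilde P$ is bijective with bounded inverse. Injectivity of $\widetilde P$ is immediate from injectivity of $R$. For surjectivity, given $y\in\mathrm{Im}(P)$, write $R^{-1}y=w_1+w_2$ with $w_1:=QR^{-1}y\in\mathrm{Im}(Q)$ and $w_2:=(I-Q)R^{-1}y\in\Ker(Q)$; applying $R$ gives $y=Rw_1+Rw_2$ with $Rw_1\in\mathrm{Im}(P)$ and $Rw_2\in\Ker(P)$, hence $y-Rw_1=Rw_2\in\mathrm{Im}(P)\cap\Ker(P)=\{0\}$, i.e. $y=Rw_1=\widetilde P(w_1)$. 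The same bookkeeping shows that $\widetilde P$ and the bounded operator $QR^{-1}\big|_{\mathrm{Im}(P)}:\mathrm{Im}(P)\to\mathrm{Im}(Q)$ are mutually inverse, so $\widetilde P$ is an isomorphism. The step demanding the most care is recognizing that the naive contraction estimate fails: since $P$ and $Q$ are arbitrary idempotents, not orthogonal projections, a bound like $\|Px-x\|=\|P(Q-P)x\|\le\|P\|\,\|P-Q\|\,\|x\|$ on $\mathrm{Im}(Q)$ is worthless because $\|P\|$ is not controlled by the hypothesis; the symmetrized operator $R$ is precisely what circumvents this, and the only genuinely delicate point is keeping track of which of $R$, $R'$ sits on which side in the identity $RR'=R'R=G$.
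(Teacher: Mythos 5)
Your proof is correct, and in fact the paper does not prove Lemma~\ref{thm:lemprojprox} at all: it is cited from \cite[Lemma 4.7]{BenPicpre}, so there is no in-text argument to compare against. The operator $R=PQ+(I-P)(I-Q)$ you introduce is the classical Sz.-Nagy/Kato comparison operator used to show that two projections at distance less than one are similar, and the chain of observations — $R$ restricts to $\widetilde P$ on $\mathrm{Im}(Q)$, carries $\Ker(Q)$ into $\Ker(P)$, and satisfies $RR'=R'R=I-(P-Q)^2$, invertible by Neumann series — is exactly the standard route; I verified the algebraic identity and the rest of the bookkeeping, and it all checks out.

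One small inaccuracy in your closing remark: for $x\in\mathrm{Im}(Q)$ one has $Qx=x$, so the correct identity is $Px-x=(P-Q)x$, not $P(Q-P)x$ (the latter vanishes on $\mathrm{Im}(Q)$). Consequently the naive estimate $\|Px-x\|\le\|P-Q\|\,\|x\|<\|x\|$ holds with no spurious $\|P\|$ factor, and it does show that $\widetilde P$ is injective and bounded below. What the naive estimate genuinely fails to give is \emph{surjectivity} of $\widetilde P$ onto $\mathrm{Im}(P)$: composing back with $Q$ to close the loop is where an uncontrolled $\|Q\|$ would appear. So your instinct that the symmetrized operator $R$ is needed is right, but the reason is surjectivity, not the lower bound.
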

A self-adjoint operator $T$ in $\mathcal L(H)$ is said to be \emph{essentially positive} if it is of the form
$P+K$, where $P$ is a positive isomorphism of $H$, that is, a self-adjoint isomorphism satisfying that
$\prodot{Px,x}>0$ for every $x\in H\setminus \{0\}$, and $K$ is a compact (self-adjoint) operator on $H$.
In particular, an essentially positive operator is Fredholm.
A symmetric bilinear form $B$ will be called essentially positive if it is represented by an essentially
positive operator. Also this notion does not depend on the choice of an inner product.
An essentially positive operator has finite index; moreover, the restriction to any closed subspace
of an essentially positive form is essentially positive.
\begin{lemma}\label{lem4}
Let $B:H\times H\to \R$ be an essentially positive symmetric bilinear
form. Then $B$ has finite index.
\end{lemma}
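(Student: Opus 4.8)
The plan is to work directly with the representation $T=P+K$ furnished by the definition of essential positivity, where $T$ is the operator representing $B$ with respect to $\langle\cdot,\cdot\rangle$, $P$ is a positive self-adjoint isomorphism of $H$ and $K$ is compact and self-adjoint.

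First I would record that $P$ is coercive. Writing $P=(P^{1/2})^2$ with $P^{1/2}$ again a self-adjoint isomorphism, one has $\norm{x}\le\norm{P^{-1/2}}\,\norm{P^{1/2}x}$ for every $x\in H$, whence
\[
\prodot{Px,x}=\norm{P^{1/2}x}^2\ge c\,\norm{x}^2,\qquad c:=\norm{P^{-1/2}}^{-2}>0 .
\]
(Equivalently: $P$ self-adjoint, positive and invertible forces $\inf\sigma(P)>0$.)

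Next I would argue by contradiction. By the definitions recalled above, $\mathrm n_-(B)=\dim V^-(T)$ and $B$ is negative definite on $V^-(T)$; so, were $\mathrm n_-(B)=\infty$, the subspace $V^-(T)$ would be infinite dimensional and we could choose an orthonormal sequence $(e_n)_{n\in\N}$ inside it. An orthonormal sequence converges weakly to $0$, hence --- $K$ being compact --- $Ke_n\to 0$ in norm, and in particular $\abs{\prodot{Ke_n,e_n}}\le\norm{Ke_n}\to 0$. Therefore
\[
B(e_n,e_n)=\prodot{Pe_n,e_n}+\prodot{Ke_n,e_n}\ge c-\abs{\prodot{Ke_n,e_n}}\,,
\]
and the right-hand side tends to $c>0$; thus $B(e_n,e_n)>0$ for $n$ large, which contradicts the negative definiteness of $B$ on $V^-(T)\ni e_n$. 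Hence $\mathrm n_-(B)<\infty$.

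There is no genuine obstacle here: the argument rests only on the coercivity of the positive part $P$ and on the elementary fact that a compact operator sends weakly null sequences to norm null sequences. The same estimate, carried out after splitting off the (finite dimensional) spectral subspace of $K$ associated with $(-\infty,-c/2]$, shows moreover that $B$ is positive definite on a closed subspace of finite codimension, which re-proves that $B$ is Fredholm; but for the statement of the Lemma the contradiction argument above is all that is needed.
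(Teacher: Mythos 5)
Your proof is correct, but it takes a genuinely different route from the paper's. The paper changes the inner product to $\langle\cdot,\cdot\rangle_1=\langle P\cdot,\cdot\rangle$, observes that the operator representing $B$ in the new inner product is $I+P^{-1}K$ with $P^{-1}K$ compact self-adjoint, and then identifies $\mathrm n_-(B)$ with the (finite) total dimension of the eigenspaces of $P^{-1}K$ for eigenvalues $\lambda<-1$. You instead argue by contradiction: if $V^-(T)$ were infinite dimensional you extract an orthonormal sequence there, use $e_n\rightharpoonup 0$ plus compactness of $K$ to kill the compact term, and use coercivity of $P$ to force $B(e_n,e_n)>0$ eventually, contradicting negative definiteness. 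Both are sound; your argument is a bit more elementary (no change of inner product, no spectral decomposition of $P^{-1}K$) and isolates exactly the two ingredients in play, while the paper's approach is slightly more structural and gives an explicit eigenvalue-counting description of the index, which is closer in spirit to the characterization later exploited in Lemma~\ref{thm:charessposforms}. Your closing remark --- that splitting off the finite-dimensional spectral subspace of $K$ below $-c/2$ yields positivity on a finite-codimensional closed subspace --- is in fact essentially the converse implication of Lemma~\ref{thm:charessposforms}, so the two viewpoints are equivalent and you have recovered the same picture by a different path.
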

\begin{proof}
Since $B$ is essentially positive, the
self-adjoint operator $T$ associated to $B$ can be expressed as $P+K$,
with $P$ a positive isomorphism and $K$ a compact self-adjoint operator on
$H$. Considering the equivalent scalar product $\langle\cdot,\cdot\rangle_1=\left\langle
P\cdot,\cdot\right\rangle$, the self-adjoint operator associated to $B$
can be expressed as $I+P^{-1}K$, where $I$ is the identity in $H$
and $P^{-1}K$ is compact. Note that $P^{-1}K$ is self-adjoint relatively to the inner
product $\langle\cdot,\cdot\rangle_1$.
The index of $B$ is the sum of the dimensions of
the eigenspaces of the self-adjoint compact operator $P^{-1}K$ corresponding to
its eigenvalues $\lambda<-1$; this is a finite number.
\end{proof}
The following characterization of essentially positive symmetric bilinear forms will be useful:
\begin{lemma}\label{thm:charessposforms}
Let $B$ be a bounded symmetric bilinear form on $H$. Then, $B$ is essentially positive if and only if
there exists a closed finite codimensional subspace $V$ of $H$ such that:
\begin{equation}\label{eq:infpositivo}
\inf_{\stackrel{x\in V}{\|x\|=1}}B(x,x)>0.
\end{equation}
\end{lemma}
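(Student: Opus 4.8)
The plan is to work throughout with the bounded self-adjoint operator $T$ representing $B$, i.e.\ $B=\langle T\cdot,\cdot\rangle$, and to prove the two implications separately.

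For the "only if" direction I would simply recycle the computation already carried out in the proof of Lemma~\ref{lem4}. Writing $T=P+K$ with $P$ a positive isomorphism and $K$ compact self-adjoint, I pass to the equivalent inner product $\langle\cdot,\cdot\rangle_1=\langle P\cdot,\cdot\rangle$, with respect to which $B$ is represented by $I+S$, where $S=P^{-1}K$ is compact and $\langle\cdot,\cdot\rangle_1$-self-adjoint. The spectral theorem for compact self-adjoint operators then forces the eigenvalues of $S$ to accumulate only at $0$, so only finitely many of them are $\le-\tfrac12$ and the corresponding eigenspaces are finite dimensional. Taking $V$ to be the $\langle\cdot,\cdot\rangle_1$-orthogonal complement of the (finite-dimensional) sum of those eigenspaces, one gets $\langle Sx,x\rangle_1\ge-\tfrac12\|x\|_1^2$ for $x\in V$, hence $B(x,x)\ge\tfrac12\|x\|_1^2\ge c\,\|x\|^2$ for a suitable $c>0$ by equivalence of the norms, which is exactly \eqref{eq:infpositivo}. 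I would note in passing that, the two inner products being equivalent, "closed" and "finite codimensional" mean the same thing in both, so this $V$ is admissible for the statement.

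For the "if" direction, given a closed finite-codimensional $V$ with $\alpha:=\inf_{x\in V,\,\|x\|=1}B(x,x)>0$ (so $B(v,v)\ge\alpha\|v\|^2$ on $V$), I would produce the decomposition $T=P+K$ by a finite-rank correction supported on $W:=V^{\perp}$, which is finite dimensional. Splitting $x=v+w\in V\oplus W$ orthogonally, using $B(v,v)\ge\alpha\|v\|^2$, the global bound $|B(v,w)|\le\|T\|\,\|v\|\|w\|$ for the cross term, the bound $B(w,w)\ge-\|T\|\,\|w\|^2$, and $\|x\|^2=\|v\|^2+\|w\|^2$, one obtains for the candidate $P:=T+\lambda P_W$ (with $P_W$ the orthogonal projection onto $W$)
\[
\langle Px,x\rangle=B(x,x)+\lambda\|P_Wx\|^2\ \ge\ \alpha\|v\|^2-2\|T\|\,\|v\|\|w\|+(\lambda-\|T\|)\|w\|^2 .
\]
The right-hand side, read as a quadratic form in $(\|v\|,\|w\|)\in\R^2$, is positive definite as soon as $\alpha(\lambda-\|T\|)>\|T\|^2$; fixing such a $\lambda$, there is $c_0>0$ with $\langle Px,x\rangle\ge c_0\|x\|^2$ for all $x$. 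Then $P$ is a self-adjoint operator bounded below, hence a positive isomorphism, while $K:=-\lambda P_W$ is self-adjoint of finite rank, hence compact; since $T=P+K$, the form $B$ is essentially positive.

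The only genuinely delicate point I anticipate is the bookkeeping in this last estimate: one must be sure the cross terms $B(v,w)$ are controlled uniformly (they are, since $B$ is bounded) and that the finite-rank surgery on $W$ does not destroy positivity on $V$ (it does not, since $P_W$ vanishes on $V$ and the off-diagonal contribution is absorbed by completing the square once $\lambda$ is large). The "only if" direction is essentially contained in Lemma~\ref{lem4}, and the sole extra observation needed there is that equivalence of inner products preserves closedness and codimension.
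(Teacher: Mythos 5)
Your proof is correct, but it takes a genuinely different route from the paper in both directions. For the ``if'' direction (existence of $V$ implies essential positivity), the paper builds $P$ block-diagonally: it represents $B\vert_{V\times V}$ by a positive isomorphism $\widetilde P$ of $V$, extends by the identity on $V^\perp$, and then observes that the remainder $K=T-P$ maps $V$ into $V^\perp$ and $V^\perp$ into $H$, hence has range inside the finite-dimensional $V^\perp+K(V^\perp)$ and is therefore of finite rank. You instead set $P:=T+\lambda P_W$ with $W=V^\perp$ and $\lambda$ large, and prove that $P$ is a positive isomorphism by a discriminant/completing-the-square estimate on the quadratic form in $(\Vert v\Vert,\Vert w\Vert)$; the remainder $K=-\lambda P_W$ is visibly finite rank. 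Both constructions are correct; yours has the merit of avoiding the (mild) bookkeeping about which subspace $K$ maps where, at the cost of a short elementary inequality. For the ``only if'' direction, the paper stays in the original inner product and applies the spectral decomposition of the compact operator $K$ directly, choosing $V$ to be the closed span of the eigenspaces with $\vert\lambda\vert\le c/2$ where $c$ bounds $P$ below. You pass instead to the auxiliary inner product $\langle P\cdot,\cdot\rangle$ and decompose the compact operator $P^{-1}K$, essentially recycling the computation of Lemma~\ref{lem4}; this is a minor variant, logically equivalent, but it does neatly unify the two lemmas. Your parenthetical point that the two equivalent inner products give the same notions of closedness and codimension is precisely the observation needed to close the argument and is worth stating, as you did.
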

\begin{proof}
Assume that a subspace $V$ as in the statement of the Lemma exists. Let $\widetilde P:V\to V$ be the positive isomorphism such that $B\vert_{V\times V}=\langle\widetilde P\cdot,\cdot\rangle$, and define $P:H\to H$ by
setting $P(x)=\widetilde P(x)$ for $x\in V$ and $P(x)=x$ for $x\in V^\perp$. Clearly, $P$ is a positive isomorphism
of $H$.
Moreover, the difference $B-\langle P\cdot,\cdot\rangle$ is represented by a finite rank (hence compact)
operator $K$ of $H$; namely, $K(V)\subset V^\perp$, and so $K(H)\subset V^\perp+K(V^\perp)$, which is a finite dimensional subspace of $H$. Thus, $B$ is essentially positive.

Conversely, assume that $B$ is essentially positive, and set $B=\langle(P+K)\cdot,\cdot\rangle$, where
$P$ is a positive isomorphism of $H$ and $K$ is a compact self-adjoint operator on $H$.
There exists a positive constant $c>0$ such that $\langle Px,x\rangle\ge c\Vert x\Vert^2$ for all
$x\in H$. Since $K$ is compact, there exists also a finite codimensional closed space $V$ of $H$ such that
$\vert\langle Kx,x\rangle\vert\le\frac c2\Vert x\Vert^2$ for all $x\in V$.
Namely, $V$ can be taken to be the closure of the direct sum of the
eigenspaces of $K$ corresponding to all the eigenvalues $\lambda$ of $K$ with $\vert\lambda\vert\le\frac c2$.
Now, for $x\in V$, $B(x,x)=\langle Px,x\rangle+\langle Kx,x\rangle\ge\frac c2\Vert x\Vert^2$.
This concludes the proof.
\end{proof}

Let $\mathcal{L}_{sa}(H)$ be the closed subspace of $\mathcal L(H)$ consisting of all self-adjoint operators,
and let $\mathcal B_s(H)$ denote the space of bounded symmetric bilinear forms on $H$.
Once an inner product is fixed on $H$, one has a natural identification of these two spaces
by $\mathcal{L}_{sa}(H)\ni T\mapsto\langle T\cdot,\cdot\rangle\in\mathcal B_s(H)$;
we will consider $\mathcal B_s(H)$ endowed with the induced topology.
\begin{lemma}\label{thm:4}
Let $B$ be a bounded symmetric bilinear form on $H$, let $V\subset H$ be a closed subspace
such that \eqref{eq:infpositivo} holds. If $P$ is a projection in $\mathcal L(H)$ which is
\emph{sufficiently close} to the orthogonal projection $P_V$ onto $V$ and
$\widetilde{B}\in\mathcal B_s(H)$ is close enough to $B$, then
\[\inf_{\stackrel{x\in P(V)}{\|x\|=1}}\widetilde B(x,x)>0.\]
\end{lemma}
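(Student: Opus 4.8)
The plan is to transport the uniform positivity \eqref{eq:infpositivo} of $B$ on $V$ to $\widetilde B$ on $P(V)$ through the isomorphism furnished by Lemma~\ref{thm:lemprojprox}, keeping quantitative control of the errors coming from $P\ne P_V$ and from $\widetilde B\ne B$. Set $c:=\inf_{x\in V,\ \|x\|=1}B(x,x)>0$ (the case $V=\{0\}$ being trivial, with both infima equal to $+\infty$), and let $\delta:=\|P-P_V\|$, which we require to be $<1$. Applying Lemma~\ref{thm:lemprojprox} with $Q=P_V$, so that $\mathrm{Im}(Q)=V$, the restriction $P|_V:V\to\mathrm{Im}(P)$ is a bounded isomorphism with bounded inverse; in particular $P(V)=\mathrm{Im}(P)$, and every $y\in P(V)$ with $\|y\|=1$ can be written $y=Px$ for a unique $x\in V\setminus\{0\}$.

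The heart of the argument is the observation that $\|x\|$ is forced to be close to $1$: since $x\in V$ we have $P_Vx=x$, hence $x-Px=(P_V-P)x$ and $\big|\,\|x\|-1\,\big|=\big|\,\|x\|-\|Px\|\,\big|\le\|x-Px\|\le\delta\|x\|$, which gives $\frac{1}{1+\delta}\le\|x\|\le\frac{1}{1-\delta}$. I would then estimate, using bilinearity and symmetry of $B$, the normalization $\|Px\|=1$, the bound $\|x-Px\|\le\delta\|x\|$, the bound $B(x,x)\ge c\|x\|^2$, and the two-sided estimate on $\|x\|$ just obtained:
\[
\widetilde B(y,y)=\widetilde B(Px,Px)\ \ge\ B(x,x)-\|B\|\,\|x-Px\|\,(\|x\|+\|Px\|)-\|\widetilde B-B\|\ \ge\ \frac{c}{(1+\delta)^2}-\|B\|\,\delta\,\frac{2-\delta}{(1-\delta)^2}-\|\widetilde B-B\|,
\]
a lower bound independent of $y$. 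Since the first two terms on the right tend to $c$ and to $0$ respectively as $\delta\to0$, one first chooses $\delta$ small so that their sum exceeds, say, $c/2$, and then requires $\|\widetilde B-B\|<c/4$; this forces $\widetilde B(y,y)\ge c/4$ for all unit $y\in P(V)$, whence $\inf_{x\in P(V),\ \|x\|=1}\widetilde B(x,x)\ge c/4>0$.

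I do not expect a genuine obstacle: the only point requiring a little care is to notice that $P(V)$ actually equals $\mathrm{Im}(P)$ and that the preimages $x=(P|_V)^{-1}y$ of unit vectors have norms uniformly (in $y$) close to $1$ — it is exactly this uniformity that upgrades pointwise positivity into a strictly positive infimum. Everything else is a routine continuity estimate for a bounded bilinear form.
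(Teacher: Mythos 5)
Your proof is correct and follows essentially the same route as the paper's: both rest on the key two-sided bound $\frac{1}{1+\|P-P_V\|}\le\|x\|\le\frac{1}{1-\|P-P_V\|}$ for $x\in V$ with $\|Px\|=1$, combined with the fact that convergence in $\mathcal B_s(H)$ means uniform closeness on the unit sphere. The paper merely states that inequality and declares the conclusion a consequence, whereas you carry out the quantitative estimate explicitly; this is a welcome elaboration, not a different argument.
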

\begin{proof}
It is a consequence of the fact that  convergence in $\mathcal B_s(H)$ means uniform convergence
on the unit sphere of $H$ and the following inequality:
\[\frac{1}{1+\|P-P_V\|}\leq\|y\|\leq \frac{1}{1-\|P-P_V\|}\]
for every $y\in V$ such that $\|P(y)\|=1$.
\end{proof}

\begin{corollary}\label{thm:4bisbis}
The set:
\[\mathcal A=\big\{(B,V)\in\mathcal B_s(H)\times\mathcal G(H):\ \text{$B\vert_{V\times V}$ is essentially positive}\big\}
\]
is open in $\mathcal B_s(H)\times\mathcal G(H)$.
The map
\begin{equation}\label{eq:map1}\mathcal A\ni (B,V)\longmapsto n_-\big(B\vert_{V\times V}\big)+
\dim\left[\Ker\left(B\vert_{V\times V}\right)\right]\in \N\end{equation} is upper semi-continuous,
and the map
\begin{equation}\label{eq:map2}\mathcal A\ni (B,V)\longmapsto n_-\big(B\vert_{V\times V}\big)\in
\N\end{equation} is lower semi-continuous.
\end{corollary}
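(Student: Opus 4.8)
The plan is to reduce all three assertions to a single ``transfer'' principle: by Lemma~\ref{thm:charessposforms}, essential positivity of $B\vert_{V\times V}$ is equivalent to strict positivity of $B$ on the unit sphere of some closed subspace $W\subset V$ of finite codimension, and such a subspace can be pushed forward along a nearby $\widetilde V$ by the isomorphism $P_{\widetilde V}\vert_V$ of Lemma~\ref{thm:lemprojprox}, preserving (co)dimension; one then checks that the relevant sign condition survives when $(B,V)$ is replaced by a nearby $(\widetilde B,\widetilde V)$.

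\medskip

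\emph{Openness of $\mathcal A$.} Given $(B,V)\in\mathcal A$, pick $W\subset V$ closed, of finite codimension $m$ in $V$, with $\inf_{x\in W,\ \|x\|=1}B(x,x)=c>0$ (Lemma~\ref{thm:charessposforms}). For $\widetilde V$ close to $V$ we have $\|P_{\widetilde V}-P_V\|<1$, hence $P_{\widetilde V}\vert_V\colon V\to\widetilde V$ is an isomorphism (Lemma~\ref{thm:lemprojprox}) and $\widetilde W:=P_{\widetilde V}(W)$ is closed of codimension $m$ in $\widetilde V$. Since for $w\in W\subset V$ one has $P_{\widetilde V}(w)-w=(P_{\widetilde V}-P_V)(w)$, the operator $P_{\widetilde V}$ is as close to the identity as desired on $W$; thus Lemma~\ref{thm:4} (applied with its ``$V$'' equal to our $W$ and its ``$P$'' equal to $P_{\widetilde V}$ --- the proof only uses that $\|(P-\mathrm{id})\vert_W\|$ is small) together with $\|\widetilde B-B\|$ small gives $\inf_{x\in\widetilde W,\ \|x\|=1}\widetilde B(x,x)>0$, so $\widetilde B\vert_{\widetilde V\times\widetilde V}$ is essentially positive by Lemma~\ref{thm:charessposforms}, i.e.\ $(\widetilde B,\widetilde V)\in\mathcal A$.

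\medskip

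\emph{Semicontinuity.} For \eqref{eq:map1}, fix $(B,V)\in\mathcal A$, let $T$ represent $B\vert_{V\times V}$ and put $k=\mathrm n_-(B\vert_{V\times V})+\dim\Ker(B\vert_{V\times V})=\mathrm{codim}_V V^+(T)$. The restriction $B\vert_{V^+(T)\times V^+(T)}$ is positive definite and essentially positive, hence coercive (it is represented by the positive self-adjoint isomorphism $T\vert_{V^+(T)}$ of $V^+(T)$); transferring $W:=V^+(T)$ to $\widetilde W=P_{\widetilde V}(W)\subset\widetilde V$ as above, $\widetilde B$ is strictly positive on $\widetilde W\setminus\{0\}$ for $(\widetilde B,\widetilde V)$ near $(B,V)$, so any $\widetilde B$-negative-semidefinite subspace of $\widetilde V$ meets $\widetilde W$ trivially and thus has dimension $\le\mathrm{codim}_{\widetilde V}\widetilde W=k$; applying this to $V^-(\widetilde T)\oplus\Ker(\widetilde T)$ (where $\widetilde T$ represents $\widetilde B\vert_{\widetilde V\times\widetilde V}$) yields $\mathrm n_-(\widetilde B\vert_{\widetilde V\times\widetilde V})+\dim\Ker(\widetilde B\vert_{\widetilde V\times\widetilde V})\le k$. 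For \eqref{eq:map2}, let $k=\mathrm n_-(B\vert_{V\times V})$ and choose $S\subset V$ of dimension $k$ with $\sup_{x\in S,\ \|x\|=1}B(x,x)<0$ (compactness of the sphere in finite dimension); the same transfer, now with a direct estimate of $\widetilde B(P_{\widetilde V}w,P_{\widetilde V}w)$ against $B(w,w)$ (the error being uniform on the compact unit sphere of $S$), makes $\widetilde B$ negative definite on $\widetilde S=P_{\widetilde V}(S)$, so $\mathrm n_-(\widetilde B\vert_{\widetilde V\times\widetilde V})\ge\dim\widetilde S=k$.

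\medskip

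\emph{Main obstacle.} The one point requiring care is the transfer estimate: coercivity (resp.\ negative definiteness) of $B$ on $W$ must be shown to pass to $\widetilde B$ on $P_{\widetilde V}(W)$, and here $P_{\widetilde V}$ is close to $P_V$ rather than to $P_W$. The resolution is that on the subspace $W\subset V$ one has $P_{\widetilde V}-\mathrm{id}=(P_{\widetilde V}-P_V)\vert_W$, which is small, so the quantitative estimate behind Lemma~\ref{thm:4} goes through unchanged. Everything else is routine bookkeeping, the finiteness of all (co)dimensions involved being guaranteed by Fredholmness of the essentially positive restrictions and by the isomorphism $P_{\widetilde V}\vert_V$ of Lemma~\ref{thm:lemprojprox}.
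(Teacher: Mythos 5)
Your proof is correct and follows the paper's route: characterize essential positivity via coercivity on a finite-codimensional subspace (Lemma~\ref{thm:charessposforms}), transfer test subspaces using the isomorphism of Lemma~\ref{thm:lemprojprox}, and obtain the needed sign estimates from Lemma~\ref{thm:4}. Your explicit resolution in the ``Main obstacle'' paragraph---that $(P_{\widetilde V}-\mathrm{id})\vert_W=(P_{\widetilde V}-P_V)\vert_W$ is small whenever $W\subset V$, which justifies invoking Lemma~\ref{thm:4} with $W$ in place of its $V$ while only controlling $\Vert P_{\widetilde V}-P_V\Vert$---spells out a step the paper leaves implicit, but it is not a different argument.
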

\begin{proof}
The openness of $\mathcal A$ follows immediately from Lemma~\ref{thm:charessposforms}
and Lemma~\ref{thm:4}. Namely, if $B_{V\times V}$ is essentially positive, then there exists
a closed subspace $W\subset V$ having finite codimension in $V$ such that
$\inf\limits_{\stackrel{x\in W}{\Vert x\Vert=1}}B(x,x)>0$. Then, if $P$ is an orthogonal projection
sufficiently close to $P_V$, $P(W)$ is a finite codimensional subspace of $P(V)$, and if
$\widetilde B\in\mathcal B_s(H)$ is sufficiently close to $B$
by Lemma~\ref{thm:4} $\inf\limits_{\stackrel{x\in P(W)}{\Vert x\Vert=1}}\widetilde B(x,x)>0$.
Thus, by Lemma~\ref{thm:charessposforms}, $\widetilde B\vert_{P(W)\times P(W)}$ is essentially positive.

Given $(B,V)\in\mathcal A$, the quantity $n_-\big(B\vert_{V\times V}\big)+
\dim\left[\Ker\left(B\vert_{V\times V}\right)\right]$ is equal to the codimension in $V$ of a maximal
closed subspace $W\subset V$ on which $B$ is positive definite. Given one such $W$, an orthogonal
projection $P$ sufficiently close to $P_V$ and a symmetric bilinear form $\widetilde B$ sufficiently
close to $B$, then by Lemma~\ref{thm:4} $\widetilde B$ is positive definite on $P(W)$, and, by Lemma~\ref{thm:lemprojprox},
the codimension of $P(W)$ in $P(V)$ equals the codimension of $W$ in $V$. This proves that
\[n_-\big(B\vert_{V\times V}\big)+
\dim\left[\Ker\left(B\vert_{V\times V}\right)\right]\ge
n_-\big(\widetilde B\vert_{P(V)\times P(V)}\big)+
\dim\left[\Ker\left(\widetilde B\vert_{P(V)\times P(V)}\right)\right],\]
i.e., the upper semi-continuity of the map \eqref{eq:map1}.

Similarly, if $(B,V)\in\mathcal A$, the quantity $n_-\big(B\vert_{V\times V}\big)$ is equal to the dimension
of a maximal closed subspace $W\subset V$ on which $-B$ is positive definite. Such $W$ is necessarily finite dimensional,
hence $\inf\limits_{\stackrel{x\in W}{\Vert x\Vert=1}}-B(x,x)>0$.
Given one such $W$, an orthogonal
projection $P$ sufficiently close to $P_V$ and a symmetric bilinear form $\widetilde B$ sufficiently
close to $B$, then by Lemma~\ref{thm:4} $-\widetilde B$ is positive definite on $P(W)$, and, by Lemma~\ref{thm:lemprojprox},
the dimension of $P(W)$ is equal to the dimension of $W$. This proves that
\[n_-\big(B\vert_{V\times V}\big)\le
n_-\big(\widetilde B\vert_{P(V)\times P(V)}\big),\]
i.e., the lower semi-continuity of the map \eqref{eq:map2}.
\end{proof}
We can therefore prove the following:
\begin{proposition}[Abstract Morse Index Theorem]\label{thm:MIT}
Let $B_s:H\times H\to \R$ with $s\in[a,b]$ be a continuous family of bounded symmetric bilinear forms
and let $\{H_s\}_{s\in\left[a,b\right]}$ be a continuous family of closed subspaces of $H$
such that the restriction \[B_s:H_s\times H_s\longrightarrow \R\] is essentially positive for all $s$.
Assume that for every $s,t\in [a,b]$ with $s<t$, there exists an injective linear map
$\varphi_{\{s,t\}}:H_s\to H_t$ with closed range such that
\begin{enumerate}
\item\label{itm:2} $B_t\big(\varphi_{\{s,t\}}(V),\varphi_{\{s,t\}}(W)\big)=B_s(V,W)$ for $V,W\in H_s$;
\item\label{itm:3} $\Ker\big(B_t\vert_{H_t\times H_t}\big)\cap \varphi_{\{s,t\}}( H_s)=\{0\}$.
\end{enumerate}
Assume also that $B_a|_{H_a\times H_a}$ is non degenerate.
Then:
\begin{itemize}
\item[(a)] the map $\left[a,b\right]\ni s\mapsto\mathrm n_-\big(B_s\vert_{H_s\times H_s}\big)\in\N$ is nondecreasing;
\item[(b)] the set of instants $s\in\left]a,b\right[$ such that $\Ker\big(B_s\vert_{H_s\times H_s}\big)\ne\{0\}$ is finite;
\item[(c)] $\mathrm n_-\big(B_b\vert_{H_b\times H_b}\big)=\mathrm n_-\left( B_a\vert_{H_a\times H_a}\right)+
\sum\limits_{s\in\left]a,b\right[}\mathrm{dim}\big[\Ker\big(B_s\vert_{H_s\times H_s}\big)\big].$
\end{itemize}
\end{proposition}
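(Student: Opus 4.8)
The plan is to combine three ingredients: monotonicity of the index coming from the maps $\varphi_{\{s,t\}}$, a "jump" estimate at each $s$ coming from Proposition~\ref{thm:prop2}, and the semi-continuity of the index functions from Corollary~\ref{thm:4bisbis}, packaged through the reparameterization trick so that we can talk about $B_s\vert_{H_s\times H_s}$ as a genuinely continuous $\mathcal B_s(H)$-valued path on a fixed domain. Concretely, first I would fix $s<t$ and use the linear isomorphism $\varphi_{\{s,t\}}:H_s\to\varphi_{\{s,t\}}(H_s)\subset H_t$: by \eqref{itm:2} it is an isometry of the forms, so $\mathrm n_-(B_s\vert_{H_s\times H_s})=\mathrm n_-(B_t\vert_{\varphi_{\{s,t\}}(H_s)\times\varphi_{\{s,t\}}(H_s)})$. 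Now $\varphi_{\{s,t\}}(H_s)$ is a \emph{closed} subspace of $H_t$, and by \eqref{itm:3} it meets $\Ker(B_t\vert_{H_t\times H_t})$ trivially, so Proposition~\ref{thm:prop2} (applied with $H$ replaced by $H_t$, $B$ replaced by $B_t\vert_{H_t\times H_t}$, and $X=\varphi_{\{s,t\}}(H_s)$) gives
\[
\mathrm n_-\big(B_t\vert_{H_t\times H_t}\big)\ \ge\ \mathrm n_-\big(B_s\vert_{H_s\times H_s}\big)+\dim\big[\Ker\big(B_s\vert_{H_s\times H_s}\big)\big].
\]
This single inequality already yields (a) (take $t>s$, drop the nullity term), and it also gives the "$\ge$" half of the jump formula that will produce (b) and (c).

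Next I would establish the reverse semi-continuity. Fix an auxiliary point $s_0$ and, on a neighborhood of $s_0$, compose the continuous family $H_s\in\mathcal G(H)$ with a fixed isomorphism $H_{s_0}\to H_s$ obtained from an orthogonal projection (as in Lemma~\ref{thm:lemprojprox}, this works once $\mathrm{dist}(H_s,H_{s_0})<1$, hence on a whole neighborhood since $s\mapsto H_s$ is continuous). Pulling $B_s\vert_{H_s\times H_s}$ back along these isomorphisms gives a path of bounded symmetric forms on the \emph{fixed} Hilbert space $H_{s_0}$ which is continuous in $\mathcal B_s(H_{s_0})$, and which is essentially positive by hypothesis; Corollary~\ref{thm:4bisbis} then says that near $s_0$
\[
\mathrm n_-\big(B_s\vert_{H_s\times H_s}\big)\ \le\ \mathrm n_-\big(B_{s_0}\vert_{H_{s_0}\times H_{s_0}}\big)+\dim\big[\Ker\big(B_{s_0}\vert_{H_{s_0}\times H_{s_0}}\big)\big],
\]
and, moreover, that $\mathrm n_-(B_s\vert_{H_s\times H_s})$ is lower semi-continuous. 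Combining this with the monotonicity from the first step: for $s<s_0<t$ close to $s_0$,
\[
\mathrm n_-\big(B_s\vert_{H_s\times H_s}\big)+\dim\big[\Ker\big(B_s\vert_{H_s\times H_s}\big)\big]\ \le\ \mathrm n_-\big(B_t\vert_{H_t\times H_t}\big)\ \le\ \mathrm n_-\big(B_{s_0}\vert_{H_{s_0}\times H_{s_0}}\big)+\dim\big[\Ker\big(B_{s_0}\vert_{H_{s_0}\times H_{s_0}}\big)\big],
\]
so by lower semi-continuity $\dim\Ker(B_{s_0}\vert_{H_{s_0}\times H_{s_0}})$ is bounded by the jump $\lim_{t\downarrow s_0}\mathrm n_- - \lim_{s\uparrow s_0}\mathrm n_-$ of the nondecreasing integer function $s\mapsto\mathrm n_-(B_s\vert_{H_s\times H_s})$, and in fact equals it once one checks the matching lower bound — the first-step inequality gives $\mathrm n_-(B_t\vert_{H_t\times H_t})\ge\mathrm n_-(B_{s_0}\vert_{H_{s_0}\times H_{s_0}})+\dim\Ker(B_{s_0}\vert_{H_{s_0}\times H_{s_0}})$ for $t>s_0$, hence the right jump is $\ge\dim\Ker$, while lower semi-continuity from the left gives the left jump is $0$ when the kernel is trivial and in general the two inequalities pin the total jump at $s_0$ to exactly $\dim\Ker(B_{s_0}\vert_{H_{s_0}\times H_{s_0}})$.

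For (b): a nondecreasing integer-valued function on $[a,b]$ has only finitely many points of discontinuity, and we have just shown $\Ker(B_s\vert_{H_s\times H_s})\ne\{0\}$ forces $s$ to be such a discontinuity point (the left jump being $0$ at points of continuity would force the kernel to vanish by the squeeze above, using also nondegeneracy of $B_a\vert_{H_a\times H_a}$ to anchor the left end); hence the set in (b) is finite. For (c): since $s\mapsto\mathrm n_-(B_s\vert_{H_s\times H_s})$ is a nondecreasing step function on $[a,b]$, $\mathrm n_-(B_b\vert_{H_b\times H_b})-\mathrm n_-(B_a\vert_{H_a\times H_a})$ equals the sum of its jumps, and we have identified each jump at an interior point $s$ with $\dim\Ker(B_s\vert_{H_s\times H_s})$; one checks there is no jump at $a$ (there $B_a\vert_{H_a\times H_a}$ is assumed nondegenerate, so the monotonicity inequality with $s=a$ and $\dim\Ker=0$ is consistent with left-continuity there), which gives the stated formula. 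The main obstacle is the passage to a fixed domain: one must verify that pulling the forms $B_s\vert_{H_s\times H_s}$ back through the projection-isomorphisms of Lemma~\ref{thm:lemprojprox} is genuinely continuous into $\mathcal B_s(H_{s_0})$ and preserves essential positivity — this is exactly where the continuity of $\{H_s\}$ in $\mathcal G(H)$ and the uniform-on-the-sphere nature of convergence in $\mathcal B_s(H)$ (Lemma~\ref{thm:4}) are used, and it is the step that replaces the monotonicity hypothesis that one would otherwise need for a Uhlenbeck-type argument.
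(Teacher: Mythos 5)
Your proposal is correct and takes essentially the same route as the paper's proof: Proposition~\ref{thm:prop2} together with hypotheses~\eqref{itm:2}--\eqref{itm:3} yields monotonicity and the lower bound on each jump, while Corollary~\ref{thm:4bisbis} (whose proof already contains the projection--isomorphism pull-back you describe, via Lemma~\ref{thm:lemprojprox}) supplies the matching semicontinuity upper bound. The combination pins each jump of $s\mapsto\mathrm n_-(B_s\vert_{H_s\times H_s})$ to $\dim\Ker(B_s\vert_{H_s\times H_s})$, with the nondegeneracy of $B_a\vert_{H_a\times H_a}$ anchoring the left endpoint, exactly as in the paper.
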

\begin{proof}
Part (a) is obvious,   since by \eqref{itm:2}, the restriction of $B$ to $\varphi_{\{s,t\}}(H_s)\subset H_t$ has the same index than the restriction of $B$ to $H_s$. By Lemma~\ref{lem4}, we know that $\mathrm n_-\big(B_s\vert_{H_s\times H_s}\big)$ is finite for all $s$.
Proposition~\ref{thm:prop2} and assumptions \eqref{itm:2} and \eqref{itm:3} imply that if $t>s$
\[\mathrm n_-(B_t|_{H_t\times H_t})\geq \mathrm n_-(B_s|_{H_s\times H_s})+\dim\big[\Ker\big(B_s|_{H_s\times H_s}\big)\big].\]
A repeated use of this inequality
shows that if there existed an infinite number of instants $s\in\left]a,b\right[$
at which $B_s\vert_{H_s\times H_s}$ degenerates, then $\mathrm n_-(B\vert_{H_b\times H_b})$ would be infinite.
This is absurd, and proves (b). Corollary~\ref{thm:4bisbis} says that if there is no $s\in[c,d]$ such that $B_s\vert_{H_s\times H_s}$ degenerates,
then $\mathrm n_-\big(B_s\vert_{H_s\times H_s}\big)$ is constant on $[c,d]$; namely,
if there is no $s\in[c,d]$ such that $B_s\vert_{H_s\times H_s}$ degenerates,
then the function $s\mapsto\mathrm n_-\big(B_s\vert_{H_s\times H_s}\big)$ is both lower and upper semi-continuous
on $[c,d]$, i.e., continuous and therefore constant.
Using \eqref{itm:3} and Proposition~\ref{thm:prop2}, the jumps of the map $\mathrm n_-\big(B_s\vert_{H_s\times H_s}\big)$ at a degeneracy instant
are at least equal to the dimension of $\Ker\big(B_s\vert_{H_s\times H_s}\big)$.
On the other hand, Corollary~\ref{thm:4bisbis} says that the value of this jump is at most equal to the dimension
of $\Ker\big(B_s\vert_{H_s\times H_s}\big)$, from which the equality in (c) follows.
\end{proof}
\begin{remark}
The reader will find several analogies between the result of
Proposition~\ref{thm:MIT} and several other abstract Morse index theorems appearing in the literature,
most notably, \cite[Theorem 1.11]{Uhl73} (see also \cite{FrTh83,FrTh90}).
All these results originated from a celebrated index theorem due to Smale \cite{Sma65}
which holds for a strongly elliptic self-adjoint differential operator $L$ of even order
defined on the sections of a Riemannian vector bundle $E$ over a compact manifold with boundary $M$.
In order to obtain Smale's result, one considers the following setup:
$H$ is (a suitable closure of) the space $C^\infty(E)$ of smooth sections of $E$ vanishing on $\partial M$,
$B$ is the  bilinear form $B(u,v)=\int_M\langle Lu,v\rangle\,\mathrm dM$, and $H_s$ is the space of sections
of $E\vert_{M_s}$ vanishing on $\partial M_s$, corresponding to a smooth deformation of $M$ by a filtration
of compact submanifolds $M_s\subset M$, $s\in[a,b]$.
The strong ellipticity assumption gives that $B$ is essentially positive.
The assumption that $L$ has the \emph{uniqueness property} for the Cauchy problem, i.e., that if $u\in C^\infty(E)$
satisfies $Lu=0$ and $u$ vanishes on a nonempty open subset implies $u\equiv0$, gives assumption~\eqref{itm:3}
in Proposition~\ref{thm:MIT}. In this setup, the family $H_s$ is \emph{not} continuous in the sense
of Definition~\ref{thm:defcontfam} (see Appendix~\ref{app:A}), but only in a weaker sense.
Nonetheless, an index theorem is proved in this context using the fact that the family
$H_s$ is \emph{increasing}, i.e., $H_s\subset H_t$ when $s\le t$, in which case it suffices to require
that the family of orthogonal projections onto $H_s$ is continuous relatively to the strong
operator topology. This is the basic idea in the results of \cite{FrTh83, FrTh90, Uhl73}.
In the present paper we will consider a situation where the weak continuity of a given increasing family
of closed subspaces may fail, and  \cite[Theorem 1.11]{Uhl73} does not apply.
\medskip

\end{remark}
\end{section}

\begin{section}{Pseudo focal points and Morse-Sturm systems}
\label{sec:diffeq}
\subsection{Stationary Lorentzian manifolds and geodesics}\label{stationary}
Let $(M,g)$ be a stationary Lorentzian manifold,  $\nabla$ the associated Levi-Civita connection,
$\mathcal P$ a smooth submanifold of $M$
and  $\mathcal Y$ a timelike Killing field on $M$ (see \cite{BeErEa96,HaEl73,One83} for details).
Given a geodesic $\gamma:[0,1]\to M$,
the equation $g(\nabla_{\dot\gamma}\dot\gamma,\mathcal Y)=0$ integrates as $g(\dot\gamma,\mathcal Y)=C_\gamma$,
where $C_\gamma$ is a real constant. In \cite{GiaPic99}, it was proposed the space of $H^1$-curves
$\mathcal N_{p,q}M$ joining $p$ and $q$ in $M$ and satisfying the condition $g(\dot\gamma,\mathcal Y)=C_\gamma$
almost everywhere to study the energy functional in a stationary Lorentzian manifold. This can be generalized
as in \cite{GiaMasPic01} to the situation in that the curves depart not from a point, but from an orthogonal
initial submanifold $\mathcal P$. Let $S^{\mathcal P}_{\dot\gamma(0)}$ denote the second fundamental form
of $\mathcal P$ at the orthogonal direction $\dot\gamma(0)$. Recall that $S^{\mathcal P}_{\dot\gamma(0)}$ is the symmetric
bilinear form on $T_{\gamma(0)}\mathcal P$ defined by $S^{\mathcal P}_{\dot\gamma(0)}(v,w)=g\big(\dot\gamma(0),\nabla_vW\big)$,
where $v,w\in T_{\gamma(0)}\mathcal P$ and $W$ is any extension of $w$ to a local vector field
along $\mathcal P$.

It is a convenient assumption that $\mathcal P$ be \emph{nondegenerate} at $\gamma(0)$, i.e.,
that the restriction of the Lorentzian metric tensor $g$ to $T_{\gamma(0)}\mathcal P$ be nondegenerate.
This assumption has two basic consequences:
\begin{itemize}
\item[(a)] there are no $\mathcal P$-focal points on a sufficiently short initial portion of $\gamma$;
\smallskip

\item[(b)] $S^{\mathcal P}_{\dot\gamma(0)}$ can be written in terms of the \emph{shape operator} of $\mathcal P$,
which is a $g$-symmetric linear endomorphism, also denoted by $S^{\mathcal P}_{\dot\gamma(0)}$, defined as the linear
operator associated to the second fundamental form $S^{\mathcal P}_{\dot\gamma(0)}$ relatively to the restriction
of $g$ to $T_{\gamma(0)}\mathcal P$.
\end{itemize}
The subset $\mathcal N_{\{\mathcal P,q\}}M$ is a submanifold of the manifold $\Omega_{\{\mathcal P,q\}}M$ consisting of all $H^1$-curves from $\mathcal P$ to $q$ in $M$ satisfying  $g(\dot\gamma,\mathcal Y)=C_\gamma$. It is not difficult to show that the tangent space to $\mathcal N_{\{\mathcal P,q\}}M$ is given by the  $H^1$-vector fields  $V$ along $\gamma$ with $V(0)\in T_{\gamma(0)}\mathcal P$, $V(1)=0$ and
\begin{equation}\label{cv}
g(V',\mathcal Y)-g(V,\mathcal Y')=C_V
\end{equation}
a. e. on $[0,1]$ for any constant $C_V$ (in the following we will use the upper index $'$ to denote covariant differentiation along $\gamma$ or derivation depending on the context). Moreover, if we consider the energy functional
\[E(\gamma)=\int^1_0 g(\dot\gamma,\dot\gamma)\,\de s,\]
restricted to $\mathcal N_{\{\mathcal P,q\}}M$, its critical points are the geodesics from $\mathcal P$ to $q$ that depart orthogonally from $\mathcal P$.
Along this section we are going to consider the subspace of $T_\gamma\big(\mathcal N_{\{\mathcal P,q\}}M\big)$ putting $C_V=0$. The idea is to restrict the tangent of $\mathcal N_{\{\mathcal P,q\}}M$ to the tangent of the
subset of curves having the same constant $C_\gamma$. We observe that this subset may fail to be a submanifold of $\mathcal N_{\{\mathcal P,q\}}M$ and when it is, the critical points of the energy functional restricted to it may not be geodesics. Anyway, it will be of a great help to study the index form, which can be written as
\begin{equation}\label{index}
I_{\{\gamma,\mathcal P\}}(V,W)=\int_0^1 \big[g(V', W')+g\big(R(\dot\gamma,V)\dot\gamma,W\big)\big]\,\de t-g(S^{\mathcal P}_{\dot\gamma(0)}[V(0)],W(0)),
\end{equation}
where $R$ is the curvature tensor of $M$ chosen with the sign convention $R(X,Y)=[\nabla_X,\nabla_Y]-\nabla_{[X,Y]}$.
 Recall that a {\it Jacobi field} along $\gamma$ (see \cite{One83}) is a vector field $J$ along $\gamma$ satisfying the Jacobi equation
\[
J''=R(\dot\gamma,J)\dot\gamma;
\]
then using that the restriction of $\mathcal Y$ to $\gamma$ is a Jacobi field, it is easy to prove that $J$ satisfies Eq. \eqref{cv}. We say that $t_0\in\left]0,1\right]$ is a {\it focal instant} of the geodesic $\gamma$ with respect to $\mathcal{P}$, if there exists a non null Jacobi field $J$ satisfying $J(0)\in T_{\gamma(0)}\mathcal P$, $J'(0)+S^{\mathcal P}_{\dot\gamma(0)}[J(0)]\in \left(T_{\gamma(0)}\mathcal P\right)^\bot$,  and $J(t_0)=0$.
\subsection{Morse-Sturm systems and Jacobi fields.
}\label{parallelfocal}
The results we are going to obtain hold in the more general context of \emph{Morse-Sturm systems}, i.e.,
differential systems of the form:
\begin{equation}\label{morsesturm}
V''(t)- R(t)[V(t)]=0,
\end{equation}
where $V\in H^2([0,1];\R^n)$ and $R:[0,1]\to \mathcal{L}(\R^n)$ is a continuous map
for every $t\in [0,1]$ taking values in the space of all endomorphisms of $\R^n$ that are symmetric
relatively to a given nondegenerate symmetric bilinear form $g$ on $\R^n$.
To obtain a Morse-Sturm system from the geometrical setup, it is enough to
consider a parallel orthonormal frame along the geodesic $\gamma$, so that the Jacobi equation of the
geodesics becomes a Morse-Sturm system in $\R^n$. We will need some additional data. Let
 $g$ be a bilinear form with index 1 in $\R^n\times\R^n$
 (that in the stationary context represents the Lorentzian metric).   For every $t\in [0,1]$ we ask  $R(t)$  to be a $g$-symmetric linear map, that is, $g(R(t)[x],y)=g(x,R(t)[y])$ for every $x,y\in\R^n$. Let $Y$ be a  map $Y:[0,1]\to \R^n$ such that $g(Y(t),Y(t))<0$ and
\begin{equation*}
Y''(t)=R(t)[Y(t)],
\end{equation*}
let $P$ be a $g$-nondegenerate subspace of $\R^n$ ($P$ represents the tangent space
$T_{\gamma(0)}\mathcal P$), and  $S:P\to P$  a $g$-symmetric linear map
(that represents the shape operator $S^{\mathcal P}_{\dot\gamma(0)}$ of $\mathcal P$ at $\gamma(0)$ in the normal direction
$\dot\gamma(0)$).
We observe that the symbol $\bot$ will denote the orthogonal subspace with respect to $g$.
The initial conditions of the Morse-Sturm system \eqref{morsesturm} are given by
\begin{equation}\label{cauchy}
V(0)\in P\quad\text{and}\quad V'(0)+S[V(0)]\in P^\bot,
\end{equation}
and the associated index form of the problem is defined as
\begin{equation}\label{indexform}
I(V,W)=\int_0^1 \big[g(V', W')+g\big(R(t)[V],W\big)\big]\,\de t-g\big(S[V(0)],W(0)\big).
\end{equation}
Summing up, we will assume the initial data $(g,R,Y,P,S)$ defined above, we will refer to the solutions of \eqref{morsesturm} as {\it Jacobi fields}, and we will say that  $t_0\in\left]0,1\right]$ is a {\it focal instant} of the given data if there exists a non null Jacobi field satisfying the initial data \eqref{cauchy} and such that $J(t_0)=0$. It is easy to see that a Jacobi field $V$ satisfies
\begin{equation}\label{cv2}
g(V',Y)-g(V,Y')=C_V.
\end{equation}
\subsection{Admissible and singular Jacobi fields.}\label{admisingu}
In order to establish the results we aim to, we will need some additional properties of the Jacobi field $Y$.
In particular, the following definitions will be useful.
\begin{definition}\label{thm:defadmissible}
We say that $Y$ is {\it admissible} if $Y(0)$ and $Y'(0)$ are linearly independent
and \emph{singular} when $Y(s)$ and $Y'(s)$ are linearly dependent for every $s\in [0,1]$.
\end{definition}
If we denote
\begin{equation}\label{mdey}
m(Y)(s)=\left(\frac{Y(s)}{g\big(Y(s),Y(s)\big)}\right)'+\frac{Y'(s)}{g(Y(s),Y(s))},
\end{equation}
then $Y$ is admissible iff $m(Y)(0)\not=0$, and singular iff $m(Y)(s)=0$ for every $s\in[0,1]$.
This comes easily from the fact that as $g(Y,Y)<0$, $m(Y)(s)=0$ is equivalent to
$Y'(s)g(Y(s),Y(s))-Y(s)g(Y(s),Y'(s))=0$, and the last equality is  equivalent to $Y(s)$ and $Y'(s)$
being linearly dependent whenever $g(Y(s),Y(s))\not=0$.

We are especially interested in the case where the data comes from a geometrical setup. In fact, the initial data can be obtained from a more general context than stationary manifolds, that is, when considering a geodesic $\gamma$ in a Lorentzian manifold, a submanifold $\mathcal{P}$ orthogonal to $\gamma$ through $\gamma(0)$ and a timelike Jacobi field along $\gamma$. In this case the notion of {\it admissible} and {\it singular} Jacobi fields can be brought in the obvious way.

Even if we find a timelike Jacobi field $\mathcal Y$ along $\gamma$, it might not be admissible or singular. To overcome this situation
we can consider the family of Jacobi fields $\widetilde{ \mathcal Y}=\mathcal Y+(a+b\,t)\dot\gamma$ for $a,b\in\R$ small enough 
and look for a Jacobi field with the required properties.
\begin{lemma}\label{perturbation}
Let $(M,g)$ be a Lorentzian manifold, $\gamma$ a geodesic in $M$ and  $\mathcal Y$ a timelike Jacobi field along $\gamma$. Then:
\begin{itemize}
\item[({\it i})] If $\gamma$ is timelike,   $\dot \gamma$ is a singular Jacobi field along $\gamma$.

\item[({\it ii})] Consider a Jacobi field $\bar{\mathcal Y}(a,b)=\mathcal Y+(a+b\,t)\dot\gamma$ for some $a,b\in\R$, such that $\bar{\mathcal Y}(a,b)$ is timelike (for example when $a,b\in\R$ are small enough). If $\gamma$ is lightlike or spacelike, then there exist $a$ and $b$ in $\R$ such that $\bar {\mathcal Y}=\mathcal Y +(a+b t)\dot\gamma$ is admissible.
\item[({\it iii})]If $\gamma$ is spacelike, then the Jacobi field $\bar{\mathcal Y}=\mathcal Y-g(\mathcal Y,\dot\gamma)E_\gamma^{-1}\dot\gamma$, where $E_\gamma=g(\dot\gamma,\dot\gamma)$, is timelike and orthogonal to the geodesic $\gamma$.
\end{itemize}
\end{lemma}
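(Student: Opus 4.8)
The three assertions are logically independent, and the plan is to handle them one at a time, in the stated order.

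For (\emph{i}) there is essentially nothing to do. The velocity field of \emph{any} geodesic is a Jacobi field, since $\dot\gamma''=\nabla_{\dot\gamma}\nabla_{\dot\gamma}\dot\gamma=0$ and $R(\dot\gamma,\dot\gamma)\dot\gamma=0$. When $\gamma$ is timelike, $\dot\gamma$ is a timelike Jacobi field, and since $\dot\gamma'=\nabla_{\dot\gamma}\dot\gamma\equiv0$ the vectors $\dot\gamma(s)$ and $\dot\gamma'(s)$ are (trivially) linearly dependent for every $s$; equivalently, $m(\dot\gamma)\equiv0$ because $g(\dot\gamma,\dot\gamma)$ is constant and $\dot\gamma'=0$. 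Hence $\dot\gamma$ is singular in the sense of Definition~\ref{thm:defadmissible}.

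For (\emph{ii}) the idea is to perturb only the derivative term. Using $\dot\gamma'\equiv0$ one computes $\bar{\mathcal Y}(a,b)(0)=\mathcal Y(0)+a\,\dot\gamma(0)$ and $\bar{\mathcal Y}(a,b)'(0)=\mathcal Y'(0)+b\,\dot\gamma(0)$. If $\mathcal Y(0)$ and $\mathcal Y'(0)$ are already linearly independent, take $a=b=0$. Otherwise, since $\mathcal Y(0)\ne0$ write $\mathcal Y'(0)=\lambda\,\mathcal Y(0)$, set $a=0$, and choose $b\ne0$ small enough that $\bar{\mathcal Y}(0,b)$ is timelike on all of $[0,1]$ — possible because $t\mapsto g\big(\bar{\mathcal Y}(0,b),\bar{\mathcal Y}(0,b)\big)(t)$ depends continuously on $b$ and is negative on the compact set $[0,1]$ when $b=0$, exactly as recalled in the statement. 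Then $\bar{\mathcal Y}(0,b)(0)=\mathcal Y(0)$ while $\bar{\mathcal Y}(0,b)'(0)=\lambda\,\mathcal Y(0)+b\,\dot\gamma(0)$; were this vector a multiple of $\mathcal Y(0)$, we would have $b\,\dot\gamma(0)\in\spa\{\mathcal Y(0)\}$, forcing $\dot\gamma(0)$ to be proportional to the timelike vector $\mathcal Y(0)$, hence $g(\dot\gamma(0),\dot\gamma(0))<0$, contradicting the hypothesis that $\gamma$ is lightlike or spacelike. So $\bar{\mathcal Y}(0,b)$ is admissible. The one observation worth isolating here — and the only place in the lemma requiring a moment's thought — is that a timelike vector at $\gamma(0)$ can never be proportional to the causal-or-spacelike $\dot\gamma(0)$, so a single parameter ($b$, moving $\mathcal Y'(0)$) already suffices.

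For (\emph{iii}) everything reduces to a computation. First, $E_\gamma=g(\dot\gamma,\dot\gamma)$ is a \emph{positive} constant, and $g(\mathcal Y,\dot\gamma)$ is an affine function of $t$: its derivative $g(\mathcal Y',\dot\gamma)$ is constant, since $\frac{\de}{\de t}g(\mathcal Y',\dot\gamma)=g(\mathcal Y'',\dot\gamma)=g\big(R(\dot\gamma,\mathcal Y)\dot\gamma,\dot\gamma\big)=0$ by the antisymmetry of $R$ in its last two arguments. Hence $\bar{\mathcal Y}=\mathcal Y-E_\gamma^{-1}\,g(\mathcal Y,\dot\gamma)\,\dot\gamma$ is $\mathcal Y$ minus a field of the form $(a+b\,t)\dot\gamma$, which is itself a Jacobi field, so $\bar{\mathcal Y}$ is a Jacobi field. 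Then $g(\bar{\mathcal Y},\dot\gamma)=g(\mathcal Y,\dot\gamma)-E_\gamma^{-1}g(\mathcal Y,\dot\gamma)\,E_\gamma=0$, so $\bar{\mathcal Y}$ is $g$-orthogonal to $\gamma$; and $g(\bar{\mathcal Y},\bar{\mathcal Y})=g(\mathcal Y,\mathcal Y)-E_\gamma^{-1}\,g(\mathcal Y,\dot\gamma)^2\le g(\mathcal Y,\mathcal Y)<0$, so $\bar{\mathcal Y}$ is timelike. This finishes the proof; no part of the argument presents a genuine obstacle.
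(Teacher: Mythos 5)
Your proof is correct and follows essentially the same route as the paper: part~(\emph{i}) is dismissed as obvious, part~(\emph{iii}) is a direct computation, and in part~(\emph{ii}) the key observation is identical — a timelike vector $\mathcal Y(0)$ cannot be proportional to the non-timelike $\dot\gamma(0)$, so one can tilt $\mathcal Y'(0)$ out of $\spa\{\mathcal Y(0)\}$ by a small multiple of $\dot\gamma(0)$. Your specialization to $a=0$ in~(\emph{ii}) is a harmless simplification of the paper's argument (which keeps $a$ and $b$ both free and only asks that $\alpha a - b \ne 0$), and your extra verification in~(\emph{iii}) that $\bar{\mathcal Y}$ is indeed a Jacobi field, via $g(\mathcal Y'',\dot\gamma)=g(R(\dot\gamma,\mathcal Y)\dot\gamma,\dot\gamma)=0$, is a sound piece of bookkeeping the paper leaves implicit.
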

\begin{proof}
The first assertion is obvious and $({\it iii})$ can be shown by a straightforward computation. Let us prove $({\it ii})$. Assume that $\mathcal Y$ is not admissible, that is, there exists $\alpha$ such that $\mathcal Y'(0)=\alpha \mathcal Y(0)$. Then $\bar {\mathcal Y}'(0)=\mathcal Y'(0)+b\dot\gamma(0)$, so that $\bar{\mathcal Y}$ is not admissible when there exists $\beta$ satisfying $\mathcal Y'(0)+b\dot\gamma(0)=\beta (\mathcal Y(0)+a\dot\gamma(0))$. This implies that
$(\alpha-\beta)\mathcal Y(0)=(\beta a-b)\dot\gamma(0).$
As $\mathcal Y(0)$ is timelike and $\dot\gamma(0)$ does not, it follows that $\beta=\alpha$ and $\beta a=b$, but as $\alpha$ is fixed, we can choose $a$ and $b$ small enough such that $\beta a-b=\alpha a-b\not=0$ and $\bar{\mathcal Y}$ is timelike.
\end{proof}
In the following lemma we are going to give a geometric characterization of singularity for a vector field related to $\gamma$.
\begin{lemma}\label{frame}
Let $(M,g)$ be a Lorentzian manifold and $\mathcal Y$ a timelike Jacobi field along a spacelike geodesic $\gamma:[0,1]\to M$. If we assume that $\mathcal Y$ is orthogonal to $\dot\gamma$, which is not restrictive by Lemma~\ref{perturbation}, then $\mathcal Y$ is singular if and only if there exists a $(n-1)$-tuple of parallel orthogonal vector fields $F=\{E_1,\dots,E_{n-1}\}$ along $\gamma$ such that $\{Y(s)\}^\bot=\spa\{E_1(s),\dots,E_{n-1}(s)\}$ for every $s\in[0,1]$.
\end{lemma}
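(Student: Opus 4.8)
The plan is to prove both implications by analyzing the behaviour of the normal space $\{Y(s)\}^\perp$ as $s$ varies, viewed as a curve in the Grassmannian of $(n-1)$-planes in $T_{\gamma(s)}M$. The key observation is that since $\gamma$ is spacelike and $Y$ is timelike and orthogonal to $\dot\gamma$, the hyperplane $Y(s)^\perp$ always contains $\dot\gamma(s)$ and is spacelike; moreover, along a parallel orthonormal frame adapted to $\gamma$, the condition that $Y$ be singular will translate into the statement that the line $\R\cdot Y(s)$, hence the hyperplane $Y(s)^\perp$, is parallel.

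For the ``only if'' direction, assume $\mathcal Y$ is singular, so $m(Y)(s)=0$ for all $s$, i.e. $Y'(s)\,g(Y(s),Y(s))-Y(s)\,g(Y(s),Y'(s))=0$. First I would normalize: set $u(s)=Y(s)/\sqrt{-g(Y(s),Y(s))}$, the unit timelike field in the direction of $Y$; a direct computation using the singularity relation shows $u'(s)$ is proportional to $u(s)$, and since $g(u,u)\equiv -1$ forces $g(u',u)=0$, we get $u'\equiv 0$, i.e. $u$ is parallel. Then $Y(s)^\perp=u(s)^\perp$ is a parallel hyperplane distribution along $\gamma$, and choosing any orthonormal basis of $u(0)^\perp$ and extending by parallel transport yields the desired parallel frame $F=\{E_1,\dots,E_{n-1}\}$ with $\spa\{E_1(s),\dots,E_{n-1}(s)\}=Y(s)^\perp$ for all $s$; orthogonality of the $E_i$ to $\gamma$ follows because $\dot\gamma(0)\in Y(0)^\perp$ and parallel transport preserves $g$.

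For the ``if'' direction, suppose such a parallel frame $F$ exists. Then the hyperplane $H(s):=\spa\{E_1(s),\dots,E_{n-1}(s)\}$ is parallel, and by hypothesis $H(s)=Y(s)^\perp$, so the $g$-orthogonal line $H(s)^\perp=\R\cdot Y(s)$ is also parallel. Hence $Y'(s)\in\R\cdot Y(s)$ for every $s$, which is precisely the statement that $Y(s)$ and $Y'(s)$ are linearly dependent for all $s$, i.e. $\mathcal Y$ is singular.

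The main obstacle I anticipate is the careful handling of the normalization and the algebra connecting the three equivalent formulations — $m(Y)(s)=0$, the collinearity of $Y(s)$ and $Y'(s)$, and the parallelism of the line $\R\cdot Y(s)$ — making sure the timelike (nonvanishing norm) hypothesis is used correctly throughout, since $g(Y,Y)<0$ is what guarantees the unit field $u$ is well defined and that $\{Y(s)\}^\perp$ is a genuine $(n-1)$-dimensional spacelike subspace. Everything else is routine once the parallelism of the direction of $Y$ is established.
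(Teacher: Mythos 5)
Your proof is correct and follows essentially the same route as the paper. For the ``only if'' direction, the paper rescales by $\alpha(t)=e^{-\int_0^t\beta}$ (where $Y'=\beta Y$) to produce a parallel vector field collinear with $Y$, while you normalize to the unit timelike field $u=Y/\sqrt{-g(Y,Y)}$ and observe $u'=0$; these produce the same parallel direction field, and the remaining step (parallel transport of an orthonormal basis of $Y(0)^\perp$) is identical. For the ``if'' direction you pass through the parallelism of the orthogonal line $\R\cdot Y(s)$, whereas the paper computes directly that $g(Y',E_i)=-g(Y,E_i')=0$ and concludes $Y'\in Y^\perp{}^\perp=\R\cdot Y$; these are the same observation phrased two ways. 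No gaps.
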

\begin{proof}
If $\mathcal Y$ is singular, it is easy to see that there exists $\alpha:[0,1]\to \R\setminus\{0\}$ such that the vector field $t\to \alpha(t)\mathcal Y(t)$ is parallel. Indeed, if $\mathcal Y'(t)=\beta(t) \mathcal Y(t)$, we can choose $\alpha(t)=e^{-\int_0^t\beta(s)\de s}$. Then considering an orthonormal frame of $\mathcal Y(0)^{\bot}$ and making the parallel transport along $\gamma$ we obtain the family $F$. The other side can be shown as follows. We know that $g(\mathcal Y(t),E_i(t))=0$ for every $t\in[0,1]$ and $E_i$ are parallel along $\gamma$, so that $g(\mathcal Y'(t),E_i(t))=0$ for $i=1,\ldots,n-1$ and $\mathcal Y'(t)$ has to be linearly dependent to $\mathcal Y(t)$ for every $t\in[0,1]$.
\end{proof}
\begin{remark}\label{tothyp}
Lemma~\ref{frame} gives a relation between the geodesics admitting a singular Jacobi field and those that are contained in a totally geodesic hypersurface. It is clear that when the geodesic is contained in a totally geodesic spacelike hypersurface and there exists a timelike Jacobi field orthogonal to the hypersurface, then there exists a frame as in Lemma~\ref{frame} and a singular Jacobi field $\mathcal Y$ along $\gamma$.
\end{remark}

\subsection{Functional analytical setup}\label{sub:functanlsetup} In this subsection we will introduce several $L^2$-spaces and will state some density results, that will be used in the next subsection to compute the kernel of a restriction of the index form.
Let us consider the Hilbert space $L^2([a,b];\R^n)$ of Lebesgue integrable functions from $[a,b]$ to $\R^n$ and the Sobolev space $H^1_0([a,b];\R^n)$ of all absolutely continuous maps from $[a,b]$ to $\R^n$ vanishing in the endpoints and having derivatives in $L^2([a,b];\R^n)$. Analogously, $H^2([a,b];\R^n)$ is the space of $C^1$ maps, with an absolutely continuous first derivative and whose second derivative is in $L^2([a,b];\R^n)$. Moreover, $H^1_P([a,b];\R^n)$ consists of the functions $V\in H^1([a,b];\R^n)$ such that $V(a)\in P$ and $V(b)=0$, being $P$ a subspace of $\R^n$.

Using $Y$, we can define a smooth family of positive definite inner products $g_t^{(r)}$ on $\R^n$ as
\begin{equation}\label{riemannianmetric}
g_t^{(r)}(V,W)=g(V,W)-2\frac{g(V,Y(t))\cdot g(W,Y(t))}{g(Y(t),Y(t))}.
\end{equation}
We observe that there is a smooth family $A:[0,1]\to \mathcal{L}(\R^n)$ of $g_t^{(r)}$-symmetric operators such that
$$g(V,W)=g_t^{(r)}(A(t)[V],W)$$
for every $V,W\in\R^n.$
We also define the following inner product in the Hilbert space $L^2([0,\sigma];\R^n)$:
\begin{equation}\label{innerproduct}
{\mathcal R}_\sigma(V,V)=\int^\sigma_0g_t^{(r)}(V,V)\de t.
\end{equation}
We will now introduce two subspaces of $L^2([0,\sigma];\R^n)$, that reproduce the $L^2$-version of the space $T_\gamma\mathcal N_{\{p,q\}}M$
in the geometrical setup and a one-codimensional subspace obtained by setting $C_V=0$:
\begin{multline*}
\K(\sigma)=\Big\{V\in L^2([0,\sigma];\R^n)\, :\, \\g\big(V(t),Y(t)\big)=-2\frac{t}{\sigma}\int_0^\sigma g(V,Y')\,\de s+2\int_0^t g(V,Y')\,\de s\,\, \text{\ a. e.\ on $[0,\sigma]$}\Big\}
\end{multline*}
and
\begin{multline*}
\K_0(\sigma)=\Big\{V\in L^2([0,\sigma];\R^n)\, :g\big(V(t),Y(t)\big)=2\int_0^t g(V,Y')\,\de s\,\,
\text{\ a. e.\ on $[0,\sigma]$}\\
\text{and}\,\, \int_0^\sigma g(V,Y')\,\de s=0\Big\}.
\end{multline*}
The spaces $\K(\sigma)$ and $\K_0(\sigma)$ can also be described as follows:
\begin{multline*}\K(\sigma)=\Big\{V\in L^2([0,\sigma];\R^n)\, :\, g(V,Y)\in H^1_0([0,\sigma];\R)  \\ \text{and $\exists\ C_V\in\R$ such that}\,\, \frac{\de}{\de t}g(V,Y)=C_V+2g(V,Y')\Big\},
\end{multline*}
\begin{multline*}
\K_0(\sigma)=\Big\{V\in L^2\big([0,\sigma];\R^n\big)\, :\, g(V,Y)\in H^1_0\big([0,\sigma];\R\big)  \,\,\text{and}\\ \frac{\de}{\de t}g(V,Y)=2g(V,Y')\Big\}.
\end{multline*}
Moreover, it is easy to see that $\K(\sigma)$ and $\K_0(\sigma)$ are closed subpaces of $L^2([0,\sigma];\R^n)$. In order to simplify notations, we will omit the argument $\sigma$ when unnecessary.
We want to show that $\K_0(\sigma)\cap H^1_0\big([0,\sigma];\R^n\big)$ is dense in $\K_0(\sigma)$. The proof of this fact is based in the following abstract result.
\begin{lemma}[Density criteria]\label{thm:densitycriterion}
Let $H$ be a Hilbert space and let $R\subset H$ be a dense linear subspace.
\begin{itemize}
\item If $H_1\subset H$ is a closed subspace such that there exists a projection $P$ (not necessarily self-adjoint) from $H$ onto $H_1$ with $P(R)\subset R$,
then, $R\cap H_1$ is dense in $H_1$.
\item If $H_1\subset H$ is a closed subspace with finite codimension in $H$, then $R\cap H_1$ is dense in $H_1$.
\end{itemize}
\end{lemma}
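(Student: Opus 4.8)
The statement is a purely functional-analytic density criterion with two parts, both standard but worth proving carefully. For the first bullet, the idea is to use the given (not necessarily orthogonal) projection $P:H\to H_1$ directly. Take any $x\in H_1$; since $R$ is dense in $H$, pick a sequence $x_n\in R$ with $x_n\to x$ in $H$. Apply $P$: then $P(x_n)\in P(R)\subset R$, and $P(x_n)\in H_1$ by definition of $P$, so $P(x_n)\in R\cap H_1$. Since $P$ is bounded (a projection onto a closed subspace is automatically bounded by the closed graph theorem, as $H_1$ and $\Ker P$ are both closed and $H=H_1\oplus\Ker P$) and $x\in H_1$ so $P(x)=x$, we get $P(x_n)\to P(x)=x$. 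Hence $x$ is a limit of elements of $R\cap H_1$, proving density.

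**The finite-codimension case.** For the second bullet, the plan is to reduce to the first by constructing a suitable projection whose range is $H_1$ and which preserves $R$. Write $H=H_1\oplus F$ with $\dim F=k<\infty$. The naive choice — the orthogonal projection onto $H_1$ — need not map $R$ into $R$, so one must be cleverer. Instead, I would choose a complement $F$ that is spanned by vectors from $R$: pick any algebraic complement, then since $R$ is dense, perturb a basis $f_1,\dots,f_k$ of it slightly to $\tilde f_1,\dots,\tilde f_k\in R$ so that $\spa\{\tilde f_1,\dots,\tilde f_k\}$ is still a complement to $H_1$ (transversality is an open condition: if the $\tilde f_i$ are close enough to the $f_i$, they remain linearly independent modulo $H_1$, hence $H=H_1\oplus\tilde F$ with $\tilde F:=\spa\{\tilde f_i\}$). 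Let $P$ be the (bounded) projection onto $H_1$ along $\tilde F$. It remains to check $P(R)\subset R$: for $r\in R$, write $r=P(r)+\sum_i c_i\tilde f_i$, so $P(r)=r-\sum_i c_i\tilde f_i\in R$ since $R$ is a linear subspace containing $r$ and all the $\tilde f_i$. Now the first bullet applies and gives density of $R\cap H_1$ in $H_1$.

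**Main obstacle.** The only genuinely delicate point is the perturbation step in the second part: one must verify that a small enough perturbation of a complement remains a complement. This is where I would spend care. Concretely, the map $\R^k\to H/H_1$ sending $(\tilde f_1,\dots,\tilde f_k)$ to the images $[\tilde f_1],\dots,[\tilde f_k]$ in the $k$-dimensional quotient is continuous, and the condition "these $k$ vectors form a basis of the quotient" is the non-vanishing of a determinant, hence open; thus for $\tilde f_i$ sufficiently close to $f_i$ the images still span $H/H_1$, which (by dimension count, $k=\dim H/H_1$) forces $H_1\cap \tilde F=\{0\}$ and $H=H_1\oplus\tilde F$. Everything else — boundedness of the projections, the sequential argument — is routine. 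I would present the proof in exactly this order: (i) automatic boundedness of $P$ and the sequential argument for the first bullet; (ii) the perturbation-of-complement construction and the verification $P(R)\subset R$ for the second bullet, then invoke (i).
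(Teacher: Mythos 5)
Your proof of the first bullet is essentially identical to the paper's: apply the bounded projection $P$ to a sequence $r_n\in R$ converging to $x\in H_1$ and note $P(r_n)\in R\cap H_1$ converges to $P(x)=x$. For the second bullet you take a genuinely different route. The paper's argument is more economical: it observes that $H_1+R$ is \emph{closed} (any subspace containing a closed finite-codimensional subspace is closed) and \emph{dense} (it contains $R$), hence $H_1+R=H$; from this it immediately extracts a finite-dimensional complement $H_2\subset R$ by lifting a basis of $H/H_1$ through the surjection $R\to H/H_1$, then applies the first bullet to the projection along $H_2$. You instead start from an arbitrary algebraic complement with basis $f_1,\dots,f_k$, invoke density of $R$ to perturb each $f_i$ to $\tilde f_i\in R$, and argue by openness of transversality (nonvanishing of a determinant in the $k$-dimensional quotient) that the perturbed vectors still span a complement. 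Both arguments are correct and both verify $P(R)\subset R$ in the same way ($P(r)=r-\sum c_i\tilde f_i$, all terms in $R$). The paper's version avoids the need to quantify ``sufficiently close'' and is slightly cleaner; your perturbation version is more explicit and perhaps more robust to generalization. One small caveat on your boundedness remark in bullet one: the closed graph theorem gives boundedness of $P$ provided \emph{both} $H_1=\mathrm{Im}(P)$ and $\Ker P$ are closed; for a general (merely linear) projection onto a closed subspace, $\Ker P$ need not be closed, so the claim ``a projection onto a closed subspace is automatically bounded'' is not quite right as stated. In the paper this is moot, since the footnote defines a projection to be an element of $\mathcal L(H)$, hence bounded by hypothesis; and in your bullet-two application $\tilde F$ is finite-dimensional, so $\Ker P=\tilde F$ is indeed closed and the closed graph theorem applies cleanly.
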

\begin{proof}
Fix $x\in H_1$ and let $r_n\in R$ be a sequence with $\lim r_n=x$.
Then, $P(r_n)\in R\cap H_1$, because $P(R)\subset R$, and $\lim P(r_n)=P(x)=x$, which proves that $R\cap H_1$ is dense in $H_1$.
For the second statement, first note that $H_1+R$ is closed, because\footnote{Recall that any subspace that contains a closed finite codimensional subspace
is also closed.} it contains $H_1$, and dense, because it contains $R$; thus $H_1+R=H$.
We can therefore find a finite dimensional complement $H_2$ to $H_1$ such that $H_2\subset R$. Then,  the projection $P$
onto the first factor $H=H_1\oplus H_2\to H_1$ satisfies $P(R)\subset R$, and by the first density criterion $R\cap H_1$ is dense in $H_1$.
\end{proof}
\begin{proposition}\label{prop:densitainK}
$\K_0(\sigma)\cap H^1_0\big([0,\sigma];\R^n\big)$ is dense in $\K_0(\sigma)$.
\end{proposition}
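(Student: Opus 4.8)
The plan is to identify $\K_0(\sigma)$, as a Banach space, with a finite codimensional subspace of the space of $L^2$-sections $g$-orthogonal to $Y$, and on that space to deduce the density of the $H^1_0$-sections from Lemma~\ref{thm:densitycriterion}.

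First I would split an arbitrary $V\in L^2([0,\sigma];\R^n)$ as $V=\varphi_V\,Y+\Pi V$, where $\varphi_V=g(V,Y)/g(Y,Y)$ is well defined because $g(Y,Y)<0$ on $[0,\sigma]$, and $\Pi$ is the pointwise $g$-orthogonal projection onto $Y^{\bot}$, i.e. $(\Pi V)(t)=V(t)-\frac{g(V(t),Y(t))}{g(Y(t),Y(t))}\,Y(t)$. Then $\Pi$ is a bounded projection of $L^2([0,\sigma];\R^n)$ onto the closed subspace $\mathcal W:=\{W\in L^2([0,\sigma];\R^n):g(W,Y)\equiv0\}$, and, having smooth coefficients, it preserves $H^1_0([0,\sigma];\R^n)$. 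For $W\in L^2([0,\sigma];\R^n)$ write $\psi_W(t):=\int_0^t 2\,g(W,Y')/g(Y,Y)\,\de s$; Cauchy--Schwarz gives $\psi_W\in H^1([0,\sigma];\R)$ with $\psi_W(0)=0$ and $\lsup{\psi_W}\le C\,\norm{W}_{L^2}$, and $\psi_W(\sigma)=0$ exactly when $\ell(W):=\int_0^\sigma g(W,Y')/g(Y,Y)\,\de t$ vanishes. A short computation (only the product rule is needed) shows that $V\in\K_0(\sigma)$ if and only if $\varphi_V\in H^1_0([0,\sigma];\R)$ with $\varphi_V'=2\,g(\Pi V,Y')/g(Y,Y)$ — equivalently, $\varphi_V=\psi_{\Pi V}$ and $\ell(\Pi V)=0$. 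Hence $\Pi$ restricts to a Banach space isomorphism $\K_0(\sigma)\to\mathcal W_0:=\mathcal W\cap\Ker(\ell)$, with bounded inverse $\Phi(W)=\psi_W\,Y+W$, and one checks immediately that $\Phi$ maps $H^1_0([0,\sigma];\R^n)\cap\mathcal W_0$ bijectively onto $H^1_0([0,\sigma];\R^n)\cap\K_0(\sigma)$ (if $W$ is $H^1$ and vanishes at $0$ and $\sigma$ then $\psi_W$ is $C^1$ and vanishes there, so $\Phi(W)\in H^1_0$, while $\Pi$ sends $H^1_0\cap\K_0(\sigma)$ into $H^1_0\cap\mathcal W_0$).

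It then suffices to prove that $H^1_0([0,\sigma];\R^n)\cap\mathcal W_0$ is dense in $\mathcal W_0$. By the first item of Lemma~\ref{thm:densitycriterion}, applied with $H=L^2([0,\sigma];\R^n)$, dense subspace $R=H^1_0([0,\sigma];\R^n)$, $H_1=\mathcal W$ and the projection $P=\Pi$ (which satisfies $\Pi(R)\subset R$), the space $H^1_0([0,\sigma];\R^n)\cap\mathcal W$ is dense in $\mathcal W$. Since $\mathcal W_0=\Ker(\ell|_{\mathcal W})$ is the kernel of a bounded functional, it has codimension at most one in the Hilbert space $\mathcal W$ (it coincides with $\mathcal W$ precisely when $Y$ is singular); so the second item of Lemma~\ref{thm:densitycriterion}, applied inside $\mathcal W$ with the dense subspace just obtained, gives that $H^1_0([0,\sigma];\R^n)\cap\mathcal W_0$ is dense in $\mathcal W_0$. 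Transporting this density through the homeomorphism $\Phi$ yields the statement.

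I expect the main obstacle to be the bookkeeping of the second paragraph: one must verify carefully that the constraint defining $\K_0(\sigma)$ corresponds, under $V=\varphi_V Y+\Pi V$, exactly to ``$\varphi_V$ is the primitive of $2g(\Pi V,Y')/g(Y,Y)$ that vanishes at both endpoints'', so that $\Pi$ and $\Phi$ are genuinely mutually inverse bounded maps respecting $H^1_0$; this uses only the smoothness of $Y$ and $Y'$ and that $g(Y,Y)$ is bounded away from zero on $[0,\sigma]$. Once that identification is established, the density is a formal consequence of Lemma~\ref{thm:densitycriterion}.
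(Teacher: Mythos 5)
Your proof is correct, and it takes a genuinely different route from the paper's. The paper's proof is two lines: it cites \cite[Corollary 3.2]{MasPic03} for the density of $\K(\sigma)\cap H^1_0$ in the larger space $\K(\sigma)$, then applies the \emph{second} density criterion of Lemma~\ref{thm:densitycriterion} to the codimension-one subspace $\K_0(\sigma)=\Ker(V\mapsto C_V)\subset\K(\sigma)$. You instead build an explicit Banach-space isomorphism $\K_0(\sigma)\cong\mathcal W_0\subset\mathcal W=\{W\in L^2: g(W,Y)\equiv 0\}$ via the pointwise $g$-orthogonal splitting $V=\varphi_V Y+\Pi V$, verifying (correctly) that the defining constraints of $\K_0(\sigma)$ translate under this splitting into $\varphi_V=\psi_{\Pi V}$ with $\ell(\Pi V)=0$, and that both $\Pi$ and $\Phi(W)=\psi_W Y+W$ are bounded and respect $H^1_0$. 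You then get density in $\mathcal W$ from the \emph{first} density criterion (using $\Pi$ as the projection preserving $R=H^1_0$) and pass to $\mathcal W_0$ by the second criterion. What your approach buys is self-containment: it replaces the external citation to \cite{MasPic03} by a short, transparent computation entirely inside the paper's framework, at the cost of a somewhat longer argument. Both proofs ultimately lean on the same abstract Lemma~\ref{thm:densitycriterion}; you just invoke both of its clauses rather than only the second, and you uncover along the way the clean structural fact that $\K_0(\sigma)$ is isomorphic to the finite-codimensional subspace $\mathcal W_0$ of the pointwise $Y$-orthogonal sections, which is arguably more illuminating than the black-box citation.
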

\begin{proof}
By Corollary 3.2 in \cite{MasPic03} we know that $\K(\sigma)\cap H^1_0\big([0,\sigma];\R^n\big)$ is dense in $\K(\sigma)$. Then, the thesis is obtained easily from the second density criterion in Lemma~\ref{thm:densitycriterion},
applied to the Hilbert space $H=\K(\sigma)$, the dense linear subspace $R=\K(\sigma)\cap H^1_0\big([0,\sigma];\R^n\big)$,
and the closed subspace $H_1=\K_0(\sigma)$, that has codimension $1$ in $H$ (it is the kernel of the bounded linear functional
$\K(\sigma)\ni V\mapsto C_V\in\R$).
\end{proof}
\subsection{The kernel of the restricted index form} As a previous result to the computation of the kernel of the restricted index form in Proposition \ref{thm:prop2.6}
we need a description of the orthogonal space of $\K_0$ with respect to the Hilbert structure given by \eqref{innerproduct}, that we denote $\K_0^\bot$. First, we observe that $\K_0$ can be described as intersection of kernels of  bounded linear operators between Hilbert spaces. Indeed, we consider the operators
$L^2([0,\sigma];\R^n)\ni V\to T_1(V)(t)=g\big(V(t),Y(t)\big)-2\int^t_0 g(V,Y')\de s\in L^2([0,\sigma];\R)$ and $L^2([0,\sigma];\R^n)\ni V\to T_2(V)(t)=\int^\sigma_0 g(V,Y')\de s\in L^2([0,\sigma];\R) $
then $\K_0=T_1^{-1}(0)\cap T_2^{-1}(0)$.
Recall now the following abstract result in Banach spaces that can be found in \cite[Lemma 3.4]{MasPic03}.
\begin{lemma}\label{abstractbanach}
Let $X$ and $Y$ be Banach spaces and $T:X\to Y$ be a bounded linear operator with closed image in $Y$. Then, $\mathrm{Im}(T^*)=(\mathrm{Ker}(T))^0$; where $(\mathrm{Ker}(T))^0$ is the annihilator of $\mathrm{Ker}(T)$ in $X^*$. In particular, $\mathrm{Im}(T^*)$ is closed in $X$.
\end{lemma}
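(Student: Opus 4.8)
The plan is to establish the two inclusions separately; the nontrivial one is a standard consequence of the open mapping theorem, and it is precisely here that the closedness of $\mathrm{Im}(T)$ is used. Throughout, $T^*:Y^*\to X^*$ denotes the transpose, characterized by $(T^*\psi)(x)=\psi(Tx)$.

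The inclusion $\mathrm{Im}(T^*)\subseteq(\mathrm{Ker}(T))^0$ is immediate: if $\psi\in Y^*$ and $x\in\mathrm{Ker}(T)$, then $(T^*\psi)(x)=\psi(Tx)=\psi(0)=0$, so $T^*\psi$ annihilates $\mathrm{Ker}(T)$. For the reverse inclusion, take $\phi\in(\mathrm{Ker}(T))^0$. Since $\phi$ vanishes on $\mathrm{Ker}(T)$, it factors through the quotient map $q:X\to X/\mathrm{Ker}(T)$, producing a bounded linear functional $\bar\phi$ on $X/\mathrm{Ker}(T)$ with $\bar\phi\circ q=\phi$. In the same way, $T$ factors as $T=\bar T\circ q$, where $\bar T:X/\mathrm{Ker}(T)\to Y$ is bounded, \emph{injective}, and satisfies $\mathrm{Im}(\bar T)=\mathrm{Im}(T)$. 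By hypothesis $\mathrm{Im}(T)$ is a closed subspace of the Banach space $Y$, hence is itself a Banach space; thus $\bar T:X/\mathrm{Ker}(T)\to\mathrm{Im}(T)$ is a continuous linear bijection between Banach spaces and, by the open mapping theorem, a topological isomorphism with bounded inverse $\bar T^{-1}$. Consequently $\bar\phi\circ\bar T^{-1}$ is a bounded linear functional on $\mathrm{Im}(T)$, which extends by Hahn--Banach to some $\psi\in Y^*$. For every $x\in X$ one then computes $(T^*\psi)(x)=\psi(Tx)=\bar\phi\big(\bar T^{-1}(Tx)\big)=\bar\phi\big(\bar T^{-1}(\bar T(q(x)))\big)=\bar\phi(q(x))=\phi(x)$, so $\phi=T^*\psi\in\mathrm{Im}(T^*)$. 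This proves $\mathrm{Im}(T^*)=(\mathrm{Ker}(T))^0$.

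The final assertion follows at once: $(\mathrm{Ker}(T))^0$, being the annihilator of a subset of $X$, is weak-$*$ closed in $X^*$, hence norm closed, and therefore so is $\mathrm{Im}(T^*)$.

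The main obstacle is the construction of the bounded inverse $\bar T^{-1}$: without the closedness of $\mathrm{Im}(T)$, the induced map $\bar T$ on the quotient need not be open onto its image, and the argument breaks down. Everything else is formal factorization through the quotient together with a Hahn--Banach extension.
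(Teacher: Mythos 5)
Your proof is correct. Note that the paper itself does not prove this lemma but cites it as \cite[Lemma 3.4]{MasPic03}, so there is no in-paper argument to compare against; your argument is the standard one for this direction of the closed range theorem. The easy inclusion $\mathrm{Im}(T^*)\subseteq(\mathrm{Ker}(T))^0$ needs only the definition of the adjoint; the reverse inclusion correctly exploits the closed-range hypothesis exactly where it is needed, namely to make $\bar T:X/\mathrm{Ker}(T)\to\mathrm{Im}(T)$ a bounded bijection between Banach spaces so that the open mapping theorem produces a bounded inverse, after which Hahn--Banach yields the desired preimage $\psi\in Y^*$. The concluding observation that $(\mathrm{Ker}(T))^0$ is weak-$*$ (hence norm) closed is also correct, and in fact repairs a small typo in the paper's statement: $\mathrm{Im}(T^*)$ is of course closed in $X^*$, not in $X$.
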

Therefore, we have to compute the adjoint operators
\[T_1^*,T_2^*: L^2([0,\sigma];\R)\to L^2([0,\sigma];\R^n)\]
with respect to the usual product in $L^2([0,\sigma];\R)$ and the product defined in \eqref{innerproduct} in $L^2([0,\sigma];\R^n)$. It is easily seen that $T_1^*$ and $T_2^*$ can be expressed as
\begin{equation}
T_1^*(\phi)(t)=\phi(t)\cdot (A(t)[Y(t)])-2(A(t)[Y'(t)])\cdot\int^\sigma_t\phi(s)\de s.
\end{equation}
and
\begin{equation*}
T_2^*(\phi)(t)=\int_0^\sigma \phi(s)\,\de s\, A[Y'(t)].
\end{equation*}
\begin{lemma}\label{tclosed}
The image ${\rm Im}(T_1)$ is closed in $L^2([0,\sigma];\R)$.
\end{lemma}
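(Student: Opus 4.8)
The plan is to prove the stronger statement that $T_1$ is actually \emph{surjective}; once that is known, $\mathrm{Im}(T_1)=L^2([0,\sigma];\R)$, which is trivially closed. To do this I fix $\phi\in L^2([0,\sigma];\R)$ and look for a solution of $T_1(V)=\phi$ of the very special form $V(t)=f(t)\,Y(t)$ with $f\in L^2([0,\sigma];\R)$ a \emph{scalar} function; note $V\in L^2([0,\sigma];\R^n)$ since $Y$ is continuous on the compact interval. Setting $a(t)=g(Y(t),Y(t))$ and $b(t)=g(Y(t),Y'(t))$, one has $g(V(t),Y(t))=f(t)\,a(t)$ and $g(V(t),Y'(t))=f(t)\,b(t)$, so the equation $T_1(V)=\phi$ becomes
\[
f(t)\,a(t)-2\int_0^t f(s)\,b(s)\,\de s=\phi(t)\qquad\text{a.e.\ on }[0,\sigma].
\]
Because $Y$ solves \eqref{morsesturm} with $R$ continuous, $Y$ is of class $C^2$; since $g(Y,Y)<0$ identically and $[0,\sigma]$ is compact, there is $c>0$ with $a\le -c<0$ on $[0,\sigma]$, while $b$ is bounded there. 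Dividing by $a$ turns the displayed equation into a Volterra integral equation of the second kind $f=\psi+\mathcal Vf$, with forcing term $\psi:=\phi/a\in L^2([0,\sigma];\R)$ and Volterra operator $(\mathcal Vf)(t):=\frac{2}{a(t)}\int_0^t b(s)\,f(s)\,\de s$, whose kernel is bounded.

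The remaining step is the routine observation that $\mathcal V\in\mathcal L\big(L^2([0,\sigma];\R)\big)$ is quasinilpotent: the iterated kernels of a Volterra operator with bounded kernel $k$ satisfy $|k^{*n}(t,s)|\le C^n(t-s)_+^{n-1}/(n-1)!$, so $\norm{\mathcal V^n}^{1/n}\to 0$ and $I-\mathcal V$ is invertible. Hence $f=(I-\mathcal V)^{-1}\psi\in L^2([0,\sigma];\R)$ solves the equation, and $V=f\,Y$ satisfies $T_1(V)=\phi$, proving $\mathrm{Im}(T_1)=L^2([0,\sigma];\R)$. The only point that is not purely formal is the solvability of the Volterra equation in $L^2$ rather than in $C^0$; this is exactly the quasinilpotency estimate just quoted, and everything else (the reduction to the scalar ansatz, the bounds on $a$ and $b$) is elementary.

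As an alternative, avoiding any mention of surjectivity, one can split $T_1=S+K$ with $S(V)(t)=g(V(t),Y(t))$ and $K(V)(t)=-2\int_0^t g(V(s),Y'(s))\,\de s$. Here $S$ is surjective (take $V(t)=\phi(t)Y(t)/a(t)$), hence has closed range of finite codimension, and $K$ is compact, being the composition of the bounded operator $V\mapsto g(V,Y')$ with the Hilbert--Schmidt Volterra integration operator $L^2\to L^2$. By the stability of (lower) semi-Fredholm operators under compact perturbations \cite{Kato76}, $T_1=S+K$ again has closed range. I expect the first argument to be the cleanest to write out, with the Volterra quasinilpotency being the sole technical ingredient.
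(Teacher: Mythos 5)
Your proposal is correct, and the first argument is a genuine refinement of the paper's strategy. The paper restricts $T_1$ to scalar multiples of $Y$, obtaining $\widetilde T(\mu)=T_1(\mu Y)$ as ``multiplication by $g(Y,Y)$ plus compact,'' invokes the Fredholm alternative to conclude that $\mathrm{Im}(\widetilde T)$ is closed with finite codimension, and then observes that $\mathrm{Im}(T_1)\supset\mathrm{Im}(\widetilde T)$, so $\mathrm{Im}(T_1)$ is closed. You make the same reduction to the scalar ansatz $V=fY$, but instead of stopping at Fredholmness you observe that the resulting equation $f=\psi+\mathcal Vf$ has a Volterra operator $\mathcal V$ with bounded kernel (here using that $a=g(Y,Y)$ is bounded away from zero on a compact interval and $b=g(Y,Y')$ is bounded, both consequences of $Y$ being $C^2$ and timelike); quasinilpotency of Volterra operators then gives invertibility of $I-\mathcal V$, not merely index-zero Fredholmness, and hence the stronger conclusion that $T_1$ is actually \emph{surjective}. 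The paper gets away with less precisely because Lemma~\ref{abstractbanach} only needs the range to be closed. Your alternative second argument, writing $T_1=S+K$ with $S$ the surjective multiplication $V\mapsto g(V,Y)$ and $K$ the compact Volterra term and invoking stability of lower semi-Fredholm operators under compact perturbation, is logically correct but is really the paper's proof repackaged without the preliminary restriction to $\mu Y$. In short: the Volterra route is clean, self-contained, and gives you $\mathrm{Im}(T_1)=L^2([0,\sigma];\R)$, which is more than is needed and more than the paper proves.
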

\begin{proof}
Consider the map $\widetilde{T}: L^2([0,\sigma];\R)\to L^2([0,\sigma];\R)$ defined as $\widetilde{T}(\mu)=T_1(\mu\cdot Y)$. By the definition of $T_1$, the operator $\widetilde{T}$ is the sum of the isomorphism $\mu\to \mu\cdot g(Y,Y)$ and a compact operator on $L^2([0,\sigma];\R)$, so that by the Fredholm's alternative we conclude that ${\rm Im}(\widetilde{T})$ is closed and has finite codimension in $L^2([0,\sigma];\R)$. Finally, ${\rm Im}(\widetilde{T})\subset {\rm Im}(T_1)$ implies that ${\rm Im}(T_1)$ is closed, because it contains a closed subspace with finite codimension.
\end{proof}
Moreover, as the image of $T_2$ is the subset of constant functions, then it is also closed.
Hence, by Lemma~\ref{abstractbanach} and Lemma~\ref{tclosed}, determining $\K_0(\sigma)^\bot$ is equivalent to obtaining a description of $\mathrm{Im}(T_1^*)+\mathrm{Im}(T_2^*)$. With this in mind, we observe that given a function $\phi\in L^2([0,\sigma];\R)$ there exists a unique  $h_\phi\in H^1([0,\sigma];\R)$ such that $h_\phi(\sigma)=0$ and $h'_\phi=\phi$. The following corollary follows straightforward.
\begin{corollary}\label{orthospace}
The orthogonal space $\K_0(\sigma)^\bot$ in $L^2([0,\sigma];\R^n)$ is
\begin{equation}
\K_0(\sigma)^\bot=\big\{h'\cdot A[Y]+2 h\cdot A[Y']\, : h\in H^1([0,\sigma];\R)\big\}.
\end{equation}
\end{corollary}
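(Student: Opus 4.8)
\emph{Proof proposal.} The plan is to combine the description $\K_0(\sigma)=\mathrm{Ker}(T_1)\cap\mathrm{Ker}(T_2)$ with the standard Hilbert space identity $(\mathrm{Ker}(T_1)\cap\mathrm{Ker}(T_2))^\bot=\overline{\mathrm{Ker}(T_1)^\bot+\mathrm{Ker}(T_2)^\bot}$, and then to evaluate each summand explicitly. First I would observe that, by Lemma~\ref{tclosed} and the remark that $\mathrm{Im}(T_2)$ (the constant functions) is closed, Lemma~\ref{abstractbanach} applies to both $T_1$ and $T_2$, giving $\mathrm{Ker}(T_1)^\bot=\mathrm{Im}(T_1^*)$ and $\mathrm{Ker}(T_2)^\bot=\mathrm{Im}(T_2^*)$, both closed in $\big(L^2([0,\sigma];\R^n),\mathcal R_\sigma\big)$. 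Since the explicit formula for $T_2^*$ shows that $\mathrm{Im}(T_2^*)$ is the finite dimensional (in fact at most one-dimensional) subspace $\{c\,A[Y']:c\in\R\}$, the sum $\mathrm{Im}(T_1^*)+\mathrm{Im}(T_2^*)$ is again closed (a closed subspace plus a finite dimensional one is closed), so in fact $\K_0(\sigma)^\bot=\mathrm{Im}(T_1^*)+\mathrm{Im}(T_2^*)$, with no closure needed.

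The second step is the change of variables $\phi\leftrightarrow h_\phi$, where, as recalled before the statement, $h_\phi\in H^1([0,\sigma];\R)$ is the unique primitive of $\phi$ with $h_\phi(\sigma)=0$; thus $\int_t^\sigma\phi(s)\,\de s=-h_\phi(t)$ and $\phi=h_\phi'$. Substituting this into the formula for $T_1^*$ turns it into $T_1^*(\phi)(t)=h_\phi'(t)\,A(t)[Y(t)]+2h_\phi(t)\,A(t)[Y'(t)]$. Since $\phi\mapsto h_\phi$ is a bijection from $L^2([0,\sigma];\R)$ onto $\{h\in H^1([0,\sigma];\R):h(\sigma)=0\}$, this yields $\mathrm{Im}(T_1^*)=\{h'\cdot A[Y]+2h\cdot A[Y']:h\in H^1([0,\sigma];\R),\ h(\sigma)=0\}$.

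Finally I would remove the constraint $h(\sigma)=0$ using $\mathrm{Im}(T_2^*)$. On one hand, taking $h$ to be a constant function $c/2$ (so $h'=0$) shows $\{c\,A[Y']:c\in\R\}\subset\{h'\cdot A[Y]+2h\cdot A[Y']:h\in H^1([0,\sigma];\R)\}$, and together with $\mathrm{Im}(T_1^*)$ this gives that $\mathrm{Im}(T_1^*)+\mathrm{Im}(T_2^*)$ is contained in the set in the statement. On the other hand, any $h\in H^1([0,\sigma];\R)$ splits as $h=(h-h(\sigma))+h(\sigma)$, where $h-h(\sigma)$ vanishes at $\sigma$ and $h(\sigma)$ is constant, so $h'\cdot A[Y]+2h\cdot A[Y']=\big[(h-h(\sigma))'\cdot A[Y]+2(h-h(\sigma))\cdot A[Y']\big]+2h(\sigma)\,A[Y']\in\mathrm{Im}(T_1^*)+\mathrm{Im}(T_2^*)$, which is the reverse inclusion. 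Combining the three steps identifies $\K_0(\sigma)^\bot$ as claimed. I do not expect a serious obstacle: the only points requiring care are the closedness of $\mathrm{Im}(T_1^*)+\mathrm{Im}(T_2^*)$ — which is exactly why the finite-dimensionality of $\mathrm{Im}(T_2^*)$ is invoked — and the bookkeeping of the boundary value $h(\sigma)$ that distinguishes $\mathrm{Im}(T_1^*)$ from the full set appearing in the corollary.
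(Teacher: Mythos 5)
Your proposal is correct and follows exactly the route the paper intends: invoke Lemma~\ref{abstractbanach} (via Lemma~\ref{tclosed} and the closedness of $\mathrm{Im}(T_2)$) to reduce $\K_0(\sigma)^\bot$ to $\mathrm{Im}(T_1^*)+\mathrm{Im}(T_2^*)$, then substitute $\phi=h_\phi'$, $\int_t^\sigma\phi=-h_\phi(t)$ to rewrite the images. The paper declares the corollary to "follow straightforward" after this setup; you have supplied the two details it leaves implicit — the finite-dimensionality of $\mathrm{Im}(T_2^*)$ to justify dropping the closure in $(\Ker T_1\cap\Ker T_2)^\perp=\overline{\mathrm{Im}(T_1^*)+\mathrm{Im}(T_2^*)}$, and the affine bookkeeping of the boundary value $h(\sigma)$ that absorbs $\mathrm{Im}(T_2^*)$ into the unconstrained $h\in H^1$ — so your write-up is a faithful, correctly completed version of the same argument.
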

Let us consider the following symmetric bilinear form on $H^1_P([0,\sigma];\R^n)$ given by
\begin{equation}\label{isigma}
I_\sigma(V,W)=\int_0^\sigma\Big[g(V',W')+g(R[V],W)\Big]\de t-g\big(S[V(0)],W(0)\big)
\end{equation}
and let us denote
\[{\mathcal W}_P(\sigma)=\{V\in H_P^1([0,\sigma];\R^n):g(V', Y)-g(V,Y')=0\,\,
\text{\ a. e.\ on $[0,\sigma]$}\}.\]
In order to describe the kernel of $I_\sigma$ we will introduce the following generalization of Jacobi fields.
\begin{definition}\label{pseudo}
We will say that $V\in H^2([0,\sigma];\R^n)$ is a {\it $Y$-pseudo Jacobi field} if there exists $\lambda\in\R$ such that
\[V''-
R(t)[V]=\lambda\, m(Y),
\]
(see \eqref{mdey}) and $g(V', Y)-g(V,Y')=0$. Moreover, we say that $V$ is a {\it $(P,Y)$-pseudo Jacobi field} when in addition it holds the initial conditions
\[V(0)\in P\quad\text{and}\quad V'(0)+\lambda \frac{Y(0)}{g(Y(0),Y(0))}+S[V(0)]\in P^\bot,\]
(when $Y$ is singular, we take $\lambda=0$).
\end{definition}
When the choice of $Y$ is clear by the context we will say just pseudo Jacobi or $P$-pseudo Jacobi fields.
\begin{proposition}\label{thm:prop2.6}
A vector $V_\sigma\in{\mathcal W}_P(\sigma)$ belongs to the kernel of the restriction of $I_\sigma$ to ${\mathcal W}_P(\sigma)\times {\mathcal W}_P(\sigma)$  if and only if $V_\sigma$ is a $P$-pseudo Jacobi field.
\end{proposition}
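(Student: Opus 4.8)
The plan is to establish the two implications separately: the ``if'' direction by a direct integration by parts, and the ``only if'' direction by a Lagrange--multiplier argument followed by a bootstrap regularity step.

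For the easy implication, assume $V_\sigma\in\mathcal W_P(\sigma)$ is a $(P,Y)$-pseudo Jacobi field, so $V_\sigma''-R[V_\sigma]=\lambda\,m(Y)$ for some $\lambda\in\R$. Given an arbitrary $W\in\mathcal W_P(\sigma)$, I would integrate by parts in \eqref{isigma}; since $W(\sigma)=0$ the boundary term at $\sigma$ disappears and one obtains
\[
I_\sigma(V_\sigma,W)=-g\big(V_\sigma'(0)+S[V_\sigma(0)],W(0)\big)-\lambda\int_0^\sigma g\big(m(Y),W\big)\,\de t.
\]
A short computation from \eqref{mdey}, using the constraint $g(W',Y)=g(W,Y')$ defining $\mathcal W_P(\sigma)$, shows that $g(m(Y),W)$ is the derivative of $t\mapsto g(Y,W)/g(Y,Y)$, so that integral equals $-g(Y(0),W(0))/g(Y(0),Y(0))$ because $W(\sigma)=0$. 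Substituting this and the initial condition of Definition~\ref{pseudo}, and using $W(0)\in P$, one checks that everything cancels and $I_\sigma(V_\sigma,W)=0$.

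For the converse, suppose $V_\sigma\in\mathcal W_P(\sigma)$ lies in the kernel of $I_\sigma$ restricted to $\mathcal W_P(\sigma)\times\mathcal W_P(\sigma)$. First I would test only against those $W\in\mathcal W_P(\sigma)$ with $W(0)=0$: the boundary term then vanishes, so the bounded functional $W\mapsto\int_0^\sigma\big[g(V_\sigma',W')+g(R[V_\sigma],W)\big]\,\de t$ on $H^1_0([0,\sigma];\R^n)$ annihilates $\Ker(\mathcal C)$, where $\mathcal C\colon H^1_0([0,\sigma];\R^n)\to L^2([0,\sigma];\R)$ is $\mathcal C(W)=g(W',Y)-g(W,Y')$. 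The operator $\mathcal C$ has closed range: the substitution $W=hY$ already makes $\mathcal C$ surject onto a closed, finite-codimensional subspace of $L^2$, exactly as in Lemma~\ref{tclosed} --- this is also what lies behind the description of $\K_0(\sigma)^\bot$ in Corollary~\ref{orthospace} and the density in Proposition~\ref{prop:densitainK}. Hence by Lemma~\ref{abstractbanach} there is a multiplier $\mu\in L^2([0,\sigma];\R)$ with
\[
\int_0^\sigma\big[g(V_\sigma'-\mu Y,W')+g(R[V_\sigma]+\mu Y',W)\big]\,\de t=0\qquad\text{for all }W\in H^1_0([0,\sigma];\R^n),
\]
which means $V_\sigma'-\mu Y$ is absolutely continuous with $(V_\sigma'-\mu Y)'=R[V_\sigma]+\mu Y'$.

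The crucial point is then the regularity bootstrap. From the constraint $g(V_\sigma',Y)=g(V_\sigma,Y')$ one solves $\mu\,g(Y,Y)=g(V_\sigma,Y')-g(V_\sigma'-\mu Y,Y)$, whose right-hand side is $H^1$; hence $\mu\in H^1$ and therefore $V_\sigma'=(V_\sigma'-\mu Y)+\mu Y\in H^1$, i.e., $V_\sigma\in H^2$. Differentiating the constraint and using $Y''=R[Y]$ together with the $g$-symmetry of $R$ gives $g\big(V_\sigma''-R[V_\sigma],Y\big)=0$; since differentiating the relation for $V_\sigma'-\mu Y$ gives $V_\sigma''-R[V_\sigma]=\mu'Y+2\mu Y'$, this reads $(\mu\,g(Y,Y))'=0$, so $\mu\,g(Y,Y)\equiv\lambda$ is constant and $V_\sigma''-R[V_\sigma]=\lambda\,m(Y)$ (with $\lambda=0$ in the singular case, where $m(Y)\equiv0$). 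Finally, testing against a general $W\in\mathcal W_P(\sigma)$ and integrating by parts as in the first step reduces $I_\sigma(V_\sigma,W)=0$, after using the identity $\int_0^\sigma g(m(Y),W)\,\de t=-g(Y(0),W(0))/g(Y(0),Y(0))$ and the surjectivity of $W\mapsto W(0)$ from $\mathcal W_P(\sigma)$ onto $P$, to exactly the initial condition of Definition~\ref{pseudo}; thus $V_\sigma$ is a $(P,Y)$-pseudo Jacobi field. I expect the main obstacle to be this bootstrap: producing the $L^2$ multiplier, then playing the linear constraint on $V_\sigma$ off against the weak equation to gain two orders of regularity and to recognize that $\mu\,g(Y,Y)$ is constant.
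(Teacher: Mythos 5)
Your proof is correct, and while it establishes the same facts as the paper, it packages the argument differently in a way worth noting. The paper works at the $L^2$ level: it asserts differentiability of $V_\sigma$ up front by a ``standard boot-strap argument'' (without spelling it out), computes the $\mathcal R_\sigma$-orthogonal complement $\K_0(\sigma)^\perp$ explicitly via the adjoints $T_1^*, T_2^*$ (Corollary~\ref{orthospace}), and relies on the density of $\K_0(\sigma)\cap H^1_0$ in $\K_0(\sigma)$ (Proposition~\ref{prop:densitainK}) to pass from the weak formulation to an $L^2$-orthogonality relation that then yields the function $h$. You instead run the Lagrange-multiplier argument directly in $H^1_0$: you show the constraint operator $\mathcal C:H^1_0\to L^2$ has closed range (same trick as Lemma~\ref{tclosed}, applied to a different operator), invoke Lemma~\ref{abstractbanach} to produce the $L^2$-multiplier $\mu$, and then carry out the regularity bootstrap explicitly by solving the pointwise constraint $g(V_\sigma',Y)=g(V_\sigma,Y')$ for $\mu g(Y,Y)$ to upgrade $\mu$ to $H^1$ and hence $V_\sigma$ to $H^2$. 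This sidesteps the density lemma and the $\K_0^\perp$ computation altogether, and it fills in the regularity step the paper only gestures at; both routes then meet at the key observation that differentiating the constraint together with $Y''=R[Y]$ forces $(\mu\,g(Y,Y))'=0$, and the boundary-term analysis is identical. One small caveat you share with the paper: the sign in Definition~\ref{pseudo} (the $+\lambda\,Y(0)/g(Y(0),Y(0))$ in the initial condition) appears to be inconsistent with the identity $\int_0^\sigma g(m(Y),W)\,\de t=-g(Y(0),W(0))/g(Y(0),Y(0))$ that both you and the paper use; carrying the computation through gives $-\lambda$ rather than $+\lambda$, so either that sign or the sign in the equation $V''-R[V]=\lambda\,m(Y)$ should be flipped for the ``if'' direction to close cleanly.
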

\begin{proof}
If $V_\sigma\in{\mathcal W}_P(\sigma)$ belongs to the kernel of $I_\sigma$ restricted to ${\mathcal W}_P(\sigma)\times {\mathcal W}_P(\sigma)$, then using a standard boot-strap argument
 one proves that $V_\sigma$ is differentiable. By applying integration by parts we obtain that
\begin{equation*}
I_\sigma(V_\sigma,W)=\int_0^\sigma g(-V''_\sigma+R[V_\sigma],W)\de s
\end{equation*}
for every $W\in H_0^1([0,\sigma];\R^n)\cap {\mathcal W}_P(\sigma)=H_0^1([0,\sigma];\R^n)\cap {\mathcal K}_0(\sigma)$. In particular, it follows that
\begin{equation}\label{orthojacobi}
-A[V''_\sigma]+A[R[V_\sigma]]\in \big(\K_0(\sigma)\cap H^1_0([0,\sigma],\R^n)\big)^\bot=\K_0(\sigma)^\bot,
\end{equation}
where $\bot$ is taken with respect to the scalar product \eqref{innerproduct}. This is because $\K_0(\sigma)\cap H^1_0([0,\sigma];\R^n)$ is dense in $\K_0(\sigma)$ (Proposition \ref{prop:densitainK}).
  From Eq.~\eqref{orthojacobi} and Corollary~\ref{orthospace} we deduce that there exists a function $h\in H^1([0,\sigma];\R^n)$ such that
\begin{equation}\label{jacobih}
-V''_\sigma+R[V_\sigma]=h'\cdot Y+2h\cdot Y'.
\end{equation}
Then multiplying by $Y$ with the $g$-scalar product we get
\begin{equation*}
g(-V''_\sigma,Y)+g(R[V_\sigma],Y)=\left(h\cdot g(Y,Y)\right)'.
\end{equation*}
Observing that  $V_\sigma\in {\mathcal W}_P(\sigma)$ satisfies $g(V''_\sigma,Y)=g(V_\sigma,Y'')$, $R$ is $g$-symmetric and $Y$ is a Jacobi field, we deduce that $(h\cdot g(Y,Y))'=0$. This implies that $h=\mu\, g(Y,Y)$ for some real constant $\mu$. Substituting in \eqref{jacobih} we obtain that $V_\sigma$ is a pseudo Jacobi field. Applying again integration by parts to $I_\sigma(V_\sigma,W)$, now with $W\in {\mathcal W}_P(\sigma)$, and using that $V_\sigma$ is a pseudo Jacobi field, we obtain that
$$I_\sigma(V_\sigma,W)=-g(V'_\sigma(0)+\lambda \frac{Y(0)}{g(Y(0),Y(0))}+S[V_\sigma(0)],W(0)).$$
As there exists a vector field $W\in{\mathcal W}_P(\sigma)$ such that $W(0)=U$ for every $U\in P$, we deduce that $V'_\sigma(0)+\lambda\, Y(0)/g(Y(0),Y(0))+S[V_\sigma(0)]\in P^\bot$ and therefore $V_\sigma$ is a $P$-pseudo Jacobi field.
 The other way is straightforward.
\end{proof}
\end{section}
\begin{section}{The Morse Index Theorem in stationary spacetimes}\label{sec:morseindextheorem}
\subsection{Smooth family of Hilbert spaces}
We have now enough information to prove a Morse index theorem for the index form in \eqref{indexform} in a suitable restriction
by applying the abstract theorem stated in Proposition \ref{thm:MIT}.  We will proceed studying the evolution of the index of $I_\sigma$ when $\sigma$ goes to $1$.
As we mentioned in the introduction, we cannot assure any kind of continuity of the path $\sigma\to{\mathcal W}_P(\sigma) $, so that we will consider another one obtained as a reparametrization in the interval $[0,1]$. As a matter of fact, we have
\begin{equation}\label{mapphi}
\Phi_\sigma:\Hi_P(\sigma)\longrightarrow{\mathcal W}_P(\sigma),
\end{equation}
where
\begin{equation}\label{hsigma}
\Hi_P(\sigma)=\big\{V\in H^1_P([0,1],\R^n)\,:\, g(V'(t),Y(\sigma t))-\sigma g(V(t),Y'(\sigma t))=0\big\}
\end{equation} and $\Phi_\sigma(V)=\widetilde{V}$ is given by
$s\to \widetilde{V}(s)=V(\frac{s}{\sigma})$, which is clearly one-to-one. We observe that $\Hi_P(\sigma)$ can be extended to $\sigma=0$ putting
\begin{align*}
\Hi_P(0)&=\big\{V\in H^1_P([0,1],\R^n)\,:\, g\big(V'(t),Y(0)\big)=0\big\}\\
&=\big\{V\in H^1_P([0,1],\R^n)\,:\, g\big(V(t),Y(0)\big)=0\big\}.
\end{align*}
Analogously, we define
\begin{equation*}
\Hi^*_P(\sigma)=\big\{V\in H^1_P\big([0,1],\R^n\big)\,:\, g\big(V'(t),Y(\sigma t)\big)-\sigma g\big(V(t),Y'(\sigma t)\big)=C_V\in\R\big\},
\end{equation*}
for $\sigma\in[0,1]$.
Let us show that the family of subspaces $\Hi_P(\sigma)$ varies smoothly with $\sigma$.
In order to formalize this fact, one needs to use the differentiable structure of the
Grassmannian of all closed subspaces of a Hilbert space, see for instance reference
\cite{AbboMaj03}. In analogy with Definition~\ref{thm:defcontfam}
we give the following:
\begin{definition}
\label{thm:defC1subspaces}
Let $\mathfrak H$ be a Hilbert space, $I\subset\R$ an interval and
$\{\mathcal D^t\}_{t\in I}$ be a family of closed subspaces of
$\mathfrak H$. We say that $\{\mathcal D^t\}_{t\in I}$ is a {\em
$C^1$-family  of closed subspaces} if the map $I\ni t\mapsto\mathcal D^t\in\mathcal G(\mathfrak H)$
is of class $C^1$.
\end{definition}
It is not hard to show that  $\{\mathcal D^t\}_{t\in I}$ is a
$C^1$-family  of closed subspaces if for all $t_0\in I$ there exist
$\varepsilon>0$, a $C^1$-curve
$\alpha:\left]t_0-\varepsilon,t_0+\varepsilon\right[\cap I\mapsto
\mathcal L(\mathfrak H)$ and a closed subspace $\overline{\mathcal
D}\subset\mathfrak H$ such that $\alpha(t)$ is an isomorphism  and
$\alpha(t)(\mathcal D^t)=\overline{\mathcal D}$ for all~$t\in\left]t_0-\varepsilon,t_0+\varepsilon\right[$.
Moreover, the following criterion for the smoothness of a family of closed subspaces can be found in \cite[Lemma 2.9]{GiaMasPic01}.
\begin{proposition}
\label{thm:produce}
Let $I\subset\R$ be an interval, $\mathfrak H,\widetilde{\mathfrak H}$ be Hilbert
spaces and $F:I\mapsto\mathcal L(\mathfrak H,\widetilde{\mathfrak H})$ be a $C^1$-map
such that each $F(t)$ is surjective. Then, the family $\mathcal D^t=\mathrm{Ker}\big(F(t)\big)$
is a $C^1$-family of closed subspaces of $\mathfrak H$.\qed
\end{proposition}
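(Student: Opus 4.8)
The plan is purely functional-analytic: near each $t_0\in I$ I will construct a $C^1$ family of isomorphisms of $\mathfrak H$ that carries all the subspaces $\mathcal D^t=\Ker\big(F(t)\big)$, for $t$ close to $t_0$, onto the single fixed subspace $\mathcal D^{t_0}$. Once this is available, the local criterion for $C^1$-families of closed subspaces recalled right after Definition~\ref{thm:defC1subspaces} (with $\overline{\mathcal D}=\mathcal D^{t_0}$) yields the assertion of Proposition~\ref{thm:produce} immediately.

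\textbf{Step 1: a $C^1$ family of right inverses.} Fix $t_0\in I$. Since $F(t_0)$ is a bounded surjection of Hilbert spaces, its restriction to the closed subspace $\big(\mathcal D^{t_0}\big)^{\perp}$ is a bounded linear bijection onto $\widetilde{\mathfrak H}$, hence a topological isomorphism by the open mapping theorem; call its inverse $G_0\colon\widetilde{\mathfrak H}\to\mathfrak H$, so that $F(t_0)G_0=\mathrm{Id}$. The map $t\mapsto F(t)G_0\in\mathcal L(\widetilde{\mathfrak H})$ is $C^1$ and equals $\mathrm{Id}$ at $t=t_0$, so there is $\varepsilon>0$ such that $F(t)G_0$ is invertible for $|t-t_0|<\varepsilon$, and, since inversion is smooth on the open set of invertible operators, $t\mapsto\big(F(t)G_0\big)^{-1}$ is $C^1$ there. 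Put $G(t)=G_0\big(F(t)G_0\big)^{-1}$; this is a $C^1$ family in $\mathcal L(\widetilde{\mathfrak H},\mathfrak H)$ with $F(t)G(t)=\mathrm{Id}$ for $|t-t_0|<\varepsilon$.

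\textbf{Step 2: projections onto $\mathcal D^t$ and local trivialization.} Set $P(t)=G(t)F(t)$. Then $P(t)^2=G(t)\big(F(t)G(t)\big)F(t)=P(t)$, so $P(t)$ is a $C^1$ family of projections of $\mathfrak H$, and $\Ker\big(P(t)\big)=\Ker\big(F(t)\big)=\mathcal D^t$ (if $P(t)x=0$ then $F(t)x=F(t)G(t)F(t)x=0$). Hence $Q(t)=\mathrm{Id}-P(t)$ is a $C^1$ family of projections with $\mathrm{Im}\big(Q(t)\big)=\mathcal D^t$. Define
\[\alpha(t)=Q(t_0)\,Q(t)+\big(\mathrm{Id}-Q(t_0)\big)\big(\mathrm{Id}-Q(t)\big).\]
A direct computation gives the intertwining identity $Q(t_0)\,\alpha(t)=\alpha(t)\,Q(t)$, while $\alpha(t)-\mathrm{Id}\to0$ as $t\to t_0$ (it vanishes when $Q(t)=Q(t_0)$); after shrinking $\varepsilon$ we may assume $\alpha(t)$ is an isomorphism of $\mathfrak H$. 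The intertwining identity forces $\alpha(t)\big(\mathrm{Im}\,Q(t)\big)\subset\mathrm{Im}\,Q(t_0)$ and $\alpha(t)\big(\Ker Q(t)\big)\subset\Ker Q(t_0)$; combined with the splittings $\mathfrak H=\mathrm{Im}\,Q(t)\oplus\Ker Q(t)=\mathrm{Im}\,Q(t_0)\oplus\Ker Q(t_0)$ and the bijectivity of $\alpha(t)$, this upgrades both inclusions to equalities, so $\alpha(t)\big(\mathcal D^t\big)=\mathcal D^{t_0}$. Since $t\mapsto\alpha(t)$ is $C^1$, the quoted criterion applies and $\{\mathcal D^t\}_{t\in I}$ is a $C^1$-family of closed subspaces.

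\textbf{Main obstacle.} The only point requiring genuine care is the $C^1$ (not merely continuous) dependence in Step 1: it rests on combining the chain rule for $C^1$ maps into the Banach algebra $\mathcal L(\widetilde{\mathfrak H})$ with the (real-analytic) smoothness of the inversion map on its group of invertible elements. The algebra of the intertwining operator $\alpha(t)$ and the identification $\Ker\big(P(t)\big)=\mathcal D^t$ are elementary, and the use of Hilbert (rather than general Banach) spaces is what guarantees the complementability of $\mathcal D^{t_0}$ needed to produce $G_0$.
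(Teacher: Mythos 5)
Your proof is correct. Note that the paper itself offers no argument for Proposition~\ref{thm:produce}: it is stated with a \qed and attributed to Lemma~2.9 of the cited paper of Giannoni--Masiello--Piccione--Tausk, so what you have written is a self-contained substitute for a proof the authors delegated to a reference. Your route is the standard one and is in the same spirit as the cited lemma: both arguments produce, near each $t_0$, a $C^1$ family of isomorphisms straightening the varying kernels onto a fixed closed subspace, and then invoke the local criterion stated after Definition~\ref{thm:defC1subspaces}. The difference is in the formula for the straightening map: the cited proof uses the operator $x\mapsto\big(P_{t_0}x,F(t)x\big)\in\Ker F(t_0)\times\widetilde{\mathfrak H}$ (an isomorphism at $t=t_0$, hence for nearby $t$, carrying $\Ker F(t)$ onto $\Ker F(t_0)\times\{0\}$), whereas you first build a $C^1$ family of right inverses $G(t)=G_0\big(F(t)G_0\big)^{-1}$, obtain the $C^1$ projections $P(t)=G(t)F(t)$ with $\Ker P(t)=\Ker F(t)$, and then conjugate with Kato's pair-of-projections operator $\alpha(t)=Q(t_0)Q(t)+(\mathrm{Id}-Q(t_0))(\mathrm{Id}-Q(t))$. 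Your version is slightly longer but has the advantage of exhibiting explicitly a $C^1$ family of (non-orthogonal) projections onto the kernels, which is sometimes useful in its own right; the reference's map is more economical. All the individual steps you flag (open mapping theorem to get $G_0$, smoothness of inversion in $\mathcal L(\widetilde{\mathfrak H})$, the intertwining identity and the upgrade of the two inclusions to equalities via the splittings) check out.
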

\begin{proposition}\label{c1}
Assume that $Y$ is singular and $Y(0)$ is orthogonal to $P$ or that $Y(0)$ is not orthogonal to $P$. Then the family of closed subspaces $\Hi_P(\sigma)$ with $\sigma\in [0,1]$ defined in \eqref{hsigma} is a $C^1$-family of $H^1_P([0,1];\R^n)$. If  $Y(0)$ is orthogonal to $P$ and $Y$ is admissible, then \eqref{hsigma} is a $C^1$-family of $H^1_P([0,1];\R^n)$ in $\left]0,1\right]$.
\end{proposition}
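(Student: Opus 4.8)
The plan is to realize each $\Hi_P(\sigma)$ as the kernel of a surjective bounded operator depending in a $C^1$ fashion on $\sigma$, and then invoke Proposition~\ref{thm:produce}. Concretely, define
\[
F(\sigma):H^1_P([0,1];\R^n)\To L^2([0,1];\R),\qquad
F(\sigma)[V](t)=g\big(V'(t),Y(\sigma t)\big)-\sigma\, g\big(V(t),Y'(\sigma t)\big),
\]
so that $\Hi_P(\sigma)=\Ker\big(F(\sigma)\big)$ by \eqref{hsigma}. Since $Y$ is smooth and the evaluation/composition maps $\sigma\mapsto Y(\sigma\,\cdot\,)$, $\sigma\mapsto Y'(\sigma\,\cdot\,)$ are $C^1$ from $[0,1]$ (or $\left]0,1\right]$) into $C^0([0,1];\R^n)$, the map $\sigma\mapsto F(\sigma)\in\mathcal L\big(H^1_P,L^2\big)$ is $C^1$; this is the routine part. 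By Proposition~\ref{thm:produce}, it then suffices to check that each $F(\sigma)$ is \emph{surjective} onto $L^2([0,1];\R)$, under the stated hypotheses. Thus the whole statement reduces to a surjectivity question, and this is where the case distinctions enter.

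For surjectivity I would argue as follows. Given $\psi\in L^2([0,1];\R)$, I seek $V\in H^1_P$ with $F(\sigma)[V]=\psi$. Write $V$ in a parallel frame; the equation $g(V',Y(\sigma t))-\sigma g(V,Y'(\sigma t))=\psi$ is, after integration, equivalent to prescribing the function $t\mapsto g(V(t),Y(\sigma t))$ up to its value at $0$: indeed $\tfrac{\de}{\de t}g(V(t),Y(\sigma t)) = g(V'(t),Y(\sigma t))+\sigma g(V(t),Y'(\sigma t))$, so $\tfrac{\de}{\de t}g(V,Y(\sigma\cdot))=\psi+2\sigma g(V,Y'(\sigma\cdot))$, a linear ODE-type constraint on the scalar $u(t):=g(V(t),Y(\sigma t))$. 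The point is that one has an enormous amount of freedom in the remaining $n-1$ components of $V$ (the part of $V$ not pinned down by the single scalar constraint), which are only subject to the endpoint conditions $V(0)\in P$, $V(1)=0$. So I would: (i) pick any smooth $V$ satisfying the two endpoint conditions and realizing a prescribed value of $g(V(0),Y(\sigma\cdot))$ at $t=0$; (ii) solve the resulting first-order scalar ODE for $u$; (iii) correct $V$ by adding a vector field $W$ with $g(W(t),Y(\sigma t))\equiv 0$ and $W(0)=0$, $W(1)=0$, chosen so that $g(W',Y(\sigma\cdot))=\sigma g(W,Y'(\sigma\cdot))$ — but since the scalar part is already fixed, in fact the cleaner route is to directly build $V$ with $g(V(t),Y(\sigma t))=u(t)$ for the $u$ produced by the ODE, and then one automatically gets $F(\sigma)[V]=\psi$.

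The main obstacle, and the reason for the hypotheses, is the behavior at $\sigma=0$ together with the compatibility of the endpoint conditions with the single scalar constraint. At $\sigma=0$ the operator degenerates: $F(0)[V]=g(V'(t),Y(0))$, whose kernel is $\{V: g(V(t),Y(0))\equiv 0\}$ as noted in the excerpt, and surjectivity of $F(0)$ onto $L^2$ requires being able to prescribe $g(V'(t),Y(0))$ freely while keeping $V(0)\in P$ and $V(1)=0$; this forces the integral compatibility $\int_0^1 g(V',Y(0))\,\de t = g(V(1),Y(0))-g(V(0),Y(0)) = -g(V(0),Y(0))$, which is unconstrained precisely when $Y(0)$ is \emph{not} orthogonal to $P$ (then $g(V(0),Y(0))$ can be any real number with $V(0)\in P$), giving surjectivity at $\sigma=0$ without further assumptions. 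When $Y(0)\perp P$, that scalar is forced to be $0$, so $F(0)$ is \emph{not} surjective onto $L^2$ in general, which is why one only gets a $C^1$-family on $\left]0,1\right]$ — unless $Y$ is singular, in which case (using Lemma~\ref{frame}, choosing a parallel frame adapted to $Y^\perp$) the constraint $g(V',Y(\sigma\cdot))=\sigma g(V,Y'(\sigma\cdot))$ uncouples cleanly and one recovers surjectivity and hence $C^1$-dependence down to $\sigma=0$. So the heart of the proof is this continuity-at-the-endpoint analysis of the range of $F(\sigma)$; once surjectivity is established uniformly in the relevant $\sigma$-interval, Proposition~\ref{thm:produce} closes the argument immediately.
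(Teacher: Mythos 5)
Your overall strategy — realize $\Hi_P(\sigma)$ as $\Ker F(\sigma)$ for a $C^1$ path $\sigma\mapsto F(\sigma)$ of bounded operators, reduce to surjectivity of $F(\sigma)$, and invoke Proposition~\ref{thm:produce} — is exactly the paper's approach for the two cases in which surjectivity actually holds: it holds on all of $[0,1]$ when $Y(0)$ is not orthogonal to $P$, and on $\left]0,1\right]$ when $Y$ is admissible. Your identification of the obstruction at $\sigma=0$ (the integral compatibility $\int_0^1 g(V',Y(0))\,\de t=-g(V(0),Y(0))$) is also correct and is precisely the $\sigma=0$ specialization of the integrated identity used in the paper.

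However, the last step of your proposal has a genuine gap: you claim that in the singular case with $Y(0)\perp P$ ``one recovers surjectivity and hence $C^1$-dependence down to $\sigma=0$.'' This is false, and not only at $\sigma=0$ but for \emph{every} $\sigma$. Integrating $F_\sigma(V)/g\big(Y(\sigma\cdot),Y(\sigma\cdot)\big)$ over $[0,1]$ and integrating by parts (using $V(1)=0$) gives
\begin{equation*}
\int_0^1\frac{F_\sigma(V)(s)}{g\big(Y(\sigma s),Y(\sigma s)\big)}\,\de s
=-\frac{g\big(V(0),Y(0)\big)}{g\big(Y(0),Y(0)\big)}-\sigma\int_0^1 g\big(V(s),m(Y)(\sigma s)\big)\,\de s.
\end{equation*}
When $Y$ is singular, $m(Y)\equiv 0$; when moreover $Y(0)\perp P$, the boundary term vanishes for all $V\in H^1_P$. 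Hence the right-hand side is identically zero, so $\mathrm{Im}\big(F(\sigma)\big)$ lies in the codimension-one hyperplane $\big\{\psi\in L^2:\int_0^1\psi/g(Y(\sigma\cdot),Y(\sigma\cdot))\,\de s=0\big\}$ and $F(\sigma)$ is never surjective. Proposition~\ref{thm:produce} therefore does not apply, and choosing a parallel frame adapted to $Y^\perp$ does not rescue the argument.

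The paper handles the singular, $Y(0)\perp P$ case by a different device: the same integral identity, applied to $W\in\Hi^*_P(\sigma)$ with $F_\sigma(W)\equiv C_W$, forces $C_W\int_0^1\de s/g\big(Y(\sigma\cdot),Y(\sigma\cdot)\big)=0$, hence $C_W=0$; that is, $\Hi_P(\sigma)=\Hi^*_P(\sigma)$ identically. One then cites the known $C^1$-dependence of $\sigma\mapsto\Hi^*_P(\sigma)$ (from \cite[Corollary 4.5]{GiaMasPic01}, proven without the scalar constraint $C_W=0$). You should replace your step (iii) of the singular case by this identification of the two families, rather than attempting to restore surjectivity.
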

\begin{proof}
Consider the map $F_\sigma:H_P^1([0,1];\R^n)\to L^2([0,1];\R)$ defined as
\[F_\sigma(V)(t)=g(V'(t),Y(\sigma t))-\sigma g(V(t),Y'(\sigma t))\]
for $\sigma\in [0,1]$. We can show that $F_\sigma:[0,1]\to \mathcal{L}(H_P^1([0,1];\R),L^2([0,1];\R))$ is $C^1$ as in \cite[Lemma 4.3]{GiaMasPic01}. Moreover, by \cite[Lemma 4.4]{GiaMasPic01} we know that given a function $h\in L^2([0,1];\R)$ there exists $\hat{f}\in H^1_0([0,1];\R)$ such that if $\hat{Y}_\sigma(u)=Y(\sigma u)$, then
\[F_\sigma(\hat{f}\cdot\hat{Y}_\sigma)=h+C\]
for a certain $C\in\R$. In order to prove that $F_\sigma$ is surjective it is enough to find for any $C\in\R$ functions $W\in H_0^1([0,1];\R^n)$ and $\hat{h}\in H^1_0([0,1];\R)$ such that
\[F_\sigma(W+\hat{h}\cdot\hat{Y}_\sigma)=C.\]
The last equation is equivalent to
\[g(W'(u),Y(\sigma u))-\sigma g(W(u),Y'(\sigma u))+\hat{h}'(u)g(Y(\sigma u),Y(\sigma u))=C,\]
so that
\[\hat{h}(u)=\int_0^u \frac{C}{g(Y(\sigma s),Y(\sigma s))}\,\de s+\int_0^u \frac{-g(W'(s),Y(\sigma s))+\sigma g(W(s),Y'(\sigma s))}{g(Y(\sigma s),Y(\sigma s))}\,\de s.\]
If we find a function $W\in H_P^1([0,1];\R^n)$ such that $\hat{h}(1)=0$ the result is proven. To this end it is enough to show that there exists $W\in H_P^1([0,1];\R^n)$ such that
\[\int_0^1 \frac{-g(W'(s),Y(\sigma s))+\sigma g(W(s),Y'(\sigma s))}{g(Y(\sigma s),Y(\sigma s))}\,\de s\not=0.\]
By applying integration by parts this is equivalent to
\begin{equation}\label{condw}
-\frac{g(W(0),Y(0))}{g(Y(0),Y(0))}+\sigma\int_0^1 g(W(s),m(Y)(\sigma s))\,\de s\not=0,
\end{equation}
where $m(Y)$ is defined in \eqref{mdey}.
When $\sigma=0$, this condition is satisfied iff $Y(0)$ is not orthogonal to $P$. For $\sigma\in\left]0,1\right]$, Eq.~\eqref{condw} is satisfied for some $W\in H^1_P([0,1]; \R^n)$ if and only if $m(Y)(\sigma s)\not=0$ for some $s\in[0,1]$ or $Y(0)$ is not orthogonal to $P$. If $Y$ is admissible then $m(Y)(\sigma s)\not=0$ for $s$ small enough. Summing up, $F_\sigma$ is surjective for $\sigma\in[0,1]$ when $Y(0)$ is not orthogonal to $P$ and for $\sigma\in \left]0,1\right]$ when $Y$ is admissible. Applying Proposition \ref{thm:produce} we conclude that $\Hi_P(\sigma)$ is a $C^1$-family in both cases. Assume now that $Y$ is singular and $Y(0)$ is orthogonal to $P$. We will see that in this case $\Hi_P(\sigma)=\Hi_P^*(\sigma)$ , so that we can apply \cite[Corollary 4.5]{GiaMasPic01} to conclude that $\Hi_P(\sigma)$ is a $C^1$-family. A function $W\in \Hi^*_P(\sigma)$ satisfies that
\[-g(W'(s),Y(\sigma s))+\sigma g(W(s),Y'(\sigma s))=C_W.\]
Dividing by $g(Y(\sigma s),Y(\sigma s))$,  integrating between $0$ and $1$ and applying integration by parts, we obtain
\[-\frac{g\big(W(0),Y(0)\big)}{g\big(Y(0),Y(0)\big)}+\int^1_0 g\big(W(s),m(Y(\sigma s))\big)\,\de s=C_W\int_0^1 \frac{\de s}{g\big(Y(\sigma s),Y(\sigma s)\big)}.\]
The left term is zero and the right term is zero iff $C_W=0$, so that we conclude that the constant $C_W$ has to be zero and therefore, $\Hi_P(\sigma)=\Hi^*_P(\sigma)$.
\end{proof}
\subsection{Morse index and nullity}
We must find the counterpart of the index form in $\Hi_P(\sigma)$. Using the map \eqref{mapphi} and the index form $I_\sigma$ given in \eqref{isigma}, we obtain  $\hat{I}_\sigma(V,W)=I_\sigma(\Phi_\sigma(V),\Phi_\sigma (W))$; more explictly,
\begin{equation}\label{isigmaenH}
\hat{I}_\sigma(V,W)=\int_0^1\left[\tfrac{1}{\sigma} g\big(V'(t),W'(t)\big)+\sigma g\big(R(\sigma t)[V(t)],W(t)\big)\right]\,\de t-g\big(S[V(0)],W(0)\big).
\end{equation}
We observe that $C_\sigma=\sigma \hat{I}_\sigma$ can be extended to $\sigma=0$ in a continuous way as
\[C_0(V,W)=\int_0^1 g\big(V'(t),W'(t)\big)\,\de t.\]
\begin{proposition}\label{finiteindex}
The Morse index  of ${\hat{I}_\sigma}|_{\Hi_P(\sigma)\times \Hi_P(\sigma)}$ is finite for all $\sigma\in \left]0,1\right]$ and the related operator  to $\hat{I}_\sigma$ is essentially positive.
\end{proposition}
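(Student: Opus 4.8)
The plan is to show that $\hat I_\sigma$ restricted to $\Hi_P(\sigma)$ is represented by an essentially positive operator, and then invoke Lemma~\ref{lem4} to conclude finiteness of the index. By Lemma~\ref{thm:charessposforms}, essential positivity is equivalent to the existence of a closed finite-codimensional subspace of $\Hi_P(\sigma)$ on which $\hat I_\sigma$ is bounded below by a positive multiple of the norm. So the whole problem reduces to producing such a subspace, and the natural candidate is a subspace on which the curvature term $\int_0^1\sigma g(R(\sigma t)[V],V)\,\de t$ and the boundary term $g(S[V(0)],V(0))$ are controlled, while the kinetic term $\tfrac1\sigma\int_0^1 g(V',V')\,\de t$ dominates.

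First I would fix the background Hilbert structure: equip $\Hi_P(\sigma)$ with an inner product coming from the Riemannian metric $g^{(r)}_t$ of \eqref{riemannianmetric}, say $\langle V,W\rangle = g^{(r)}_0(V(0),W(0)) + \int_0^1 g^{(r)}_{\sigma t}(V'(t),W'(t))\,\de t$ or a similar equivalent choice. The point of passing to $g^{(r)}$ is that it is positive definite, agrees with $g$ up to a bounded correction involving $Y$, and — crucially — on vectors satisfying the constraint $g(V',Y(\sigma t)) - \sigma g(V,Y'(\sigma t))=0$ defining $\Hi_P(\sigma)$, the indefinite kinetic term $\tfrac1\sigma g(V',V')$ and the definite one $\tfrac1\sigma g^{(r)}_{\sigma t}(V',V')$ differ by a term that is either absent or absorbable. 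Concretely, $g^{(r)}_t(V,V) = g(V,V) - 2\,g(V,Y(t))^2/g(Y(t),Y(t))$, so $g(V,V)=g^{(r)}_t(V,V) + 2\,g(V,Y(t))^2/g(Y(t),Y(t))$; applied to $V'$ this shows $g(V',V') \ge g^{(r)}_{\sigma t}(V',V')$ only up to the sign of the extra term, so one must instead decompose $V'$ (pointwise) into its $Y$-component and its $g^{(r)}$-orthogonal complement and use the constraint to bound the $Y$-component of $V'$ by $\|V\|_\infty$, hence by the $H^1$-norm via Sobolev embedding. This is exactly the computation that makes the restricted form essentially positive in the stationary setting, and it is the analogue of the ``essential positivity'' already quoted in the introduction from \cite{GiaMasPic01,GiaPic99}.

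Next, having $\tfrac1\sigma\int_0^1 g^{(r)}(V',V')\,\de t \le \hat I_\sigma(V,V) + (\text{lower-order in }V)$ modulo the constraint, I would estimate the remaining terms. The curvature term is bounded in absolute value by $C\int_0^1\|V(t)\|^2\,\de t \le C\,\|V\|_\infty^2 \le C'\big(\|V(0)\|^2 + \int_0^1\|V'\|^2\big)$, and the boundary term by $C\|V(0)\|^2$; these are all controlled by the full $\Hi_P(\sigma)$-norm. To get the decisive coercivity one uses a Poincaré-type inequality on the finite-codimensional subspace: restricting to $V$ with $V(0)=0$ (codimension $=\dim P$, since $V(0)$ ranges over $P$), one has $\|V\|_\infty \le \int_0^1\|V'\|$, and then a standard argument — split off the finitely many ``bad'' directions where the curvature term beats the kinetic term — yields $\hat I_\sigma(V,V) \ge c\,\|V\|^2$ on a closed subspace of finite codimension. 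Note the constant will degenerate as $\sigma\to 0^+$ because of the $\tfrac1\sigma$ in front of the kinetic term, which is why the statement is only claimed for $\sigma\in\left]0,1\right]$; for each fixed such $\sigma$ the estimate is fine.

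\textbf{The main obstacle} I anticipate is the interplay between the constraint and the indefiniteness: one cannot simply throw away the $Y$-directions, because the constraint in $\Hi_P(\sigma)$ is a condition on $V'$, not on $V$, so controlling $\int g(V',V')$ from below requires integrating the constraint to bound $g(V(t),Y(\sigma t))$ in terms of lower-order data (this is precisely the content of the description of $\K_0$ via $g(V,Y)\in H^1_0$ and $\frac{\de}{\de t}g(V,Y)=2g(V,Y')$, transported through $\Phi_\sigma$). A clean way to organize this is to work on the $\K_0(\sigma)$ side where the inner product \eqref{innerproduct} and the orthogonal decomposition from Corollary~\ref{orthospace} are available, prove essential positivity of $I_\sigma|_{\mathcal W_P(\sigma)}$ there, and then pull back along the isomorphism $\Phi_\sigma$ of \eqref{mapphi} — since $\Phi_\sigma$ is a bounded isomorphism with bounded inverse for fixed $\sigma>0$, it preserves the class of essentially positive forms and the finiteness of the index. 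Finally, Lemma~\ref{lem4} gives that an essentially positive form has finite index, completing the proof.
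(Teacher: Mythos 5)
Your proposal is correct and follows essentially the paper's own strategy: use the constraint $g(V'(t),Y(\sigma t))=\sigma g(V(t),Y'(\sigma t))$ valid on $\Hi_P(\sigma)$ to rewrite $C_\sigma=\sigma\hat I_\sigma$ so that the kinetic term becomes the positive-definite form $\int_0^1 g^{(r)}_{\sigma t}(V',W')\,\de t$ (equivalent to the $H^1$ inner product), while the remaining terms are lower order in $V$; then conclude essential positivity and apply Lemma~\ref{lem4}. Two small points of tightening. First, the constraint is a pointwise algebraic identity that replaces the $Y$-component of $V'(t)$ exactly by $\sigma g(V(t),Y'(\sigma t))/g(Y(\sigma t),Y(\sigma t))$, so no Sobolev or Poincar\'e argument is needed to ``bound'' that component — it is substituted out, and the resulting extra term $2\sigma^2\int_0^1 g(V,Y'(\sigma t))g(W,Y'(\sigma t))/g(Y(\sigma t),Y(\sigma t))\,\de t$ joins the curvature and boundary terms as a zeroth-order correction, exactly as in \eqref{compactness}. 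Second, your estimate that the curvature term is $\le C'\bigl(\|V(0)\|^2+\int_0^1\|V'\|^2\bigr)$ only shows it is bounded by the $H^1$ norm, which is true of every bounded form and does not by itself let you ``split off finitely many bad directions''; you must invoke the \emph{compactness} of the embedding $H^1_P([0,1];\R^n)\hookrightarrow C^0([0,1];\R^n)$ to see that these zeroth-order terms are represented by compact operators. Once that is in place, either the direct $P+K$ decomposition used in the paper or your route through Lemma~\ref{thm:charessposforms} completes the argument.
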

\begin{proof}
The first part of the proposition follows from the second one and Lemma~\ref{lem4}.
By using Eq.~\eqref{riemannianmetric} and the identity $g(V'(t),Y(\sigma t))=\sigma g(V(t),Y'(\sigma t))$ for $V\in\Hi_P(\sigma)$, $C_\sigma=\sigma\hat{I}_\sigma$ can be expressed as
\begin{align}\label{compactness}
C_\sigma(V,W)=&\int_0^1 g_{\sigma t}^{(r)}(V'(t),W'(t))\de t
+2\sigma^2\int_0^1 \frac{g(V(t),Y'(\sigma t))g(W(t),Y'(\sigma t))}{g(Y(\sigma t),Y(\sigma t))}\de t\nonumber\\&+ \sigma^2\int_0^1 g(R(\sigma t)[V(t)],W(t))\de t-g(S[V(0)],W(0)).
\end{align}
The first term gives the identity times a constant as associated operator with respect to the scalar product given by
\begin{equation}\label{scalarproduct}
\int_0^1 g_{\sigma t}^{(r)}(V'(t),W'(t))\de t,
\end{equation}
for $V,W\in H_P^1([0,1];\R^n)$, but the positivity of the related operator does not depend on the scalar product. The other terms give a continuous operator that has to be compact because $H_P^1([0,1];\R^n)$ is compactly embedded in $C^0([0,1];\R^n)$.
\end{proof}
The case $\sigma=0$ must be considered separately, because the path $\Hi_P(\sigma)$ may not be even continuous
at that instant and when it is, we need to compute the index of $C_0$ to establish the Morse index theorem.
\begin{lemma}\label{positivedefinite}
If $Y(0)$ is orthogonal to $P$, then
the index of $\hat{I}_\sigma$ is zero if $\sigma$ is small enough. If $Y(0)$ is not orthogonal to $P$, the index of $C_0$ is zero.
\end{lemma}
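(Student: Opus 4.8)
The plan is to verify, in each of the two cases, that the bilinear form under consideration is actually positive definite on the relevant Hilbert space, whence its index vanishes. Consider first the case $Y(0)\perp P$, which by Proposition~\ref{c1} is the case in which $\Hi_P(\sigma)$ may fail to be continuous at $\sigma=0$; one therefore cannot argue by continuity, and I would instead establish a uniform coercivity estimate for small $\sigma>0$ (the estimate itself makes no use of $Y(0)\perp P$). It is convenient to replace $\hat I_\sigma$ by $C_\sigma:=\sigma\hat I_\sigma$, which has the same index for $\sigma>0$. Substituting \eqref{riemannianmetric} into \eqref{isigmaenH} and using the identity $g(V'(t),Y(\sigma t))=\sigma\,g(V(t),Y'(\sigma t))$ that defines $\Hi_P(\sigma)$, one finds, for $V\in\Hi_P(\sigma)$,
\[
C_\sigma(V,V)=\int_0^1 g_{\sigma t}^{(r)}(V',V')\,\de t
+2\sigma^2\!\int_0^1\frac{g(V,Y'(\sigma t))^2}{g(Y(\sigma t),Y(\sigma t))}\,\de t
+\sigma^2\!\int_0^1 g(R(\sigma t)[V],V)\,\de t-\sigma\,g(S[V(0)],V(0)).
\]
The family $g_u^{(r)}$, $u\in[0,1]$, being a continuous family of positive definite inner products over a compact interval, is uniformly positive definite, so the first term is bounded below by $c\,\|V'\|_{L^2}^2$ with $c>0$ independent of $\sigma$. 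Since $g(Y,Y)<0$, the second term is $\le 0$ and of order $\sigma^2\|V\|_{C^0}^2$, the third is of order $\sigma^2\|V\|_{C^0}^2$, and the last is of order $\sigma\,|V(0)|^2$; using $V(1)=0$ and the elementary bounds $\|V\|_{C^0}^2\le\|V'\|_{L^2}^2$ and $|V(0)|^2=\bigl|\int_0^1 V'\,\de t\bigr|^2\le\|V'\|_{L^2}^2$, these three terms together are $O(\sigma)\,\|V'\|_{L^2}^2$. Hence for $\sigma$ small enough $C_\sigma(V,V)\ge\tfrac c2\|V'\|_{L^2}^2>0$ whenever $V\neq0$, so the index of $\hat I_\sigma|_{\Hi_P(\sigma)\times\Hi_P(\sigma)}$ is zero.

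In the case $Y(0)\not\perp P$, Proposition~\ref{c1} gives that $\Hi_P(\sigma)$ is $C^1$ up to $\sigma=0$, so it is legitimate to examine $C_0$ on $\Hi_P(0)$. By the description of $\Hi_P(0)$, any $V$ there satisfies $g(V(t),Y(0))=0$ for every $t$, i.e.\ $V(t)\in Y(0)^\bot$, and differentiating this constraint gives $V'(t)\in Y(0)^\bot$ a.e. As $Y(0)$ is timelike, $g$ restricts to a positive definite inner product on the hyperplane $Y(0)^\bot$, so $C_0(V,V)=\int_0^1 g(V',V')\,\de t\ge c\,\|V'\|_{L^2}^2$, which — again because $V(1)=0$ — dominates $\|V\|_{H^1}^2$. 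Thus $C_0$ is positive definite, in particular nondegenerate, on $\Hi_P(0)$, and its index is zero.

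The only genuine difficulty is in the first case: the boundary term $-g(S[V(0)],W(0))$ of \eqref{isigmaenH} a priori carries no power of $\sigma$, so it is not evident that it can be absorbed by the coercive leading term, whose coefficient blows up like $1/\sigma$. Passing to $C_\sigma=\sigma\hat I_\sigma$ and estimating $|V(0)|$ via the endpoint condition $V(1)=0$ is exactly what renders it harmless; this is the functional-analytic counterpart of the classical fact that a nondegenerate initial submanifold has no focal points along a sufficiently short initial arc of an orthogonal geodesic. The remaining ingredients — uniform positivity of the metrics $g_u^{(r)}$, the compact embedding $H^1_P\hookrightarrow C^0$, and the Poincar\'e-type inequalities — are routine.
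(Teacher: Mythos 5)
Your proof is correct, and it takes a genuinely different route from the paper's. The paper proves both halves by invoking Proposition~4.10 of \cite{GiaMasPic01}: for the first half it observes that $Y(0)\perp P$ forces $\mathrm n_-(g|_P)=0$, so that result yields positive definiteness of $\hat I_\sigma$ on the larger space $\Hi^*_P(\sigma)$ for small $\sigma$, and the claim follows by restricting to $\Hi_P(\sigma)$; for the second half it uses the decomposition $\Hi^*_P(0)=\big(\Hi^*_P(0)\big)_+\oplus\big(\Hi^*_P(0)\big)_-$ from the same reference and shows $\Hi_P(0)\subset\big(\Hi^*_P(0)\big)_+$, on which $C_0$ is positive definite. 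Your argument is self-contained: for the first half a direct coercivity estimate (exploiting that after passing to $C_\sigma=\sigma\hat I_\sigma$ the boundary term $-g(S[V(0)],W(0))$ carries a factor $\sigma$ while $\int_0^1 g_{\sigma t}^{(r)}(V',W')\,\de t$ is uniformly coercive in $\|V'\|_{L^2}$, and the Poincar\'e bound from $V(1)=0$ absorbs everything else); for the second half the pointwise observation that vector fields in $\Hi_P(0)$, together with their derivatives, take values in the spacelike hyperplane $Y(0)^\bot$, where $g$ is positive definite. Both work; yours is more elementary and, as you correctly note, the coercivity estimate shows positive definiteness of $C_\sigma|_{\Hi_P(\sigma)\times\Hi_P(\sigma)}$ for all small $\sigma>0$ irrespective of whether $Y(0)\perp P$, so it establishes slightly more than the lemma asserts. (Incidentally, the boundary term in the paper's Eq.~\eqref{compactness} is written without the factor $\sigma$; this is a typo, inconsistent with $C_\sigma=\sigma\hat I_\sigma$ and with the stated limit $C_0$, and you are right to carry it.)
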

\begin{proof}
We first observe that if $Y(0)$ is orthogonal to $P$, then ${\rm n}_-(g|_P)=0$, so that by \cite[Proposition 4.10
]{GiaMasPic01} we know that $\hat{I}_\sigma$ restricted to $\Hi^*_P(\sigma)\times\Hi^*_P(\sigma)$ is positive definite if $\sigma$ is small enough. As $\Hi_P(\sigma)\subset \Hi^*_P(\sigma)$, the thesis follows. Assume that $Y(0)$ is not orthogonal to $P$.  In \cite[Proposition 4.10]{GiaMasPic01} it was shown that $\Hi^*_P(0)$ can be decomposed as a direct sum $\left(\Hi^*_P(0)\right)_+\oplus\left(\Hi^*_P(0)\right)_-$, where
\begin{align*}
\left(\Hi^*_P(0)\right)_+&=\{V\in\Hi^*_P(0)\, :\, V(0)\in P_+\},\\
\left(\Hi^*_P(0)\right)_-&=\{V\, :\, [0,1]\to\R^n\,\text{affine function}\, :\, V(0)\in P_-\, , \, V(1)=0\}
\end{align*}
and $P=P_+\oplus P_-$ is a decomposition of $P$ as a direct sum of a positive and a negative space, in such a way that $C_0$ is positive definite in $\left(\Hi^*_P(0)\right)_+$ and negative definite in $\left(\Hi^*_P(0)\right)_-$. On other hand, if $V\in \Hi_P(0)$, then $V(0)\in \{Y(0)\}^\bot\cap P$; moreover as $g$ is positive definite on $\{Y(0)\}^\bot\cap P$, we can choose a decomposition $P=P_+\oplus P_-$ in such a way that $\{Y(0)\}^\bot\cap P\subset P_+$. Then $\Hi_P(0)\subset \left(\Hi^*_P(0)\right)_+$ and $C_0$  is positive definite on $\Hi_P(0)$.
\end{proof}
\begin{definition}\label{defpseudo}
An instant $t_0\in(0,1]$ is said {\it $(P,Y)$-pseudo focal} if there exists a $(P,Y)$-pseudo Jacobi field such that $V(t_0)=0$.
\end{definition}
Finally we can get a Morse index theorem of Riemannian type.
\begin{theorem}\label{Morseindex}
Assume that $Y(0)$ is not orthogonal to $P$ or that $Y$ is either admissible or singular. Then, the Morse index of $I|_{\Hi_P(1)\times \Hi_P(1)}$ coincides with the number of $(P,Y)$-pseudo focal points counted with multiplicity.
\end{theorem}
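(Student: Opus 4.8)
\textbf{The plan} is to apply the Abstract Morse Index Theorem (Proposition~\ref{thm:MIT}) to the family $\{\hat I_\sigma\}$ of symmetric bilinear forms of \eqref{isigmaenH} on the fixed Hilbert space $H=H^1_P([0,1];\R^n)$, restricted to the family of closed subspaces $\{\Hi_P(\sigma)\}$ of \eqref{hsigma}, with $\sigma$ ranging over an interval $[\sigma_0,1]$ for a small $\sigma_0>0$. The integer produced by part (c) of that theorem is then identified on one side with the Morse index of $I|_{\Hi_P(1)\times\Hi_P(1)}$ --- recalling that $\hat I_1=I$ and $\Hi_P(1)=\mathcal W_P(1)$ --- and on the other side with the number of $(P,Y)$-pseudo focal points counted with multiplicity, via the isomorphisms $\Phi_\sigma\colon\Hi_P(\sigma)\to\mathcal W_P(\sigma)$ and the description of $\Ker\big(I_\sigma|_{\mathcal W_P(\sigma)\times\mathcal W_P(\sigma)}\big)$ furnished by Proposition~\ref{thm:prop2.6}.

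First I would check the standing hypotheses of Proposition~\ref{thm:MIT}. Continuity of $\sigma\mapsto\hat I_\sigma\in\mathcal B_s(H)$ on $[\sigma_0,1]$ is immediate from the explicit formula \eqref{isigmaenH} once $\sigma_0>0$; continuity (in fact $C^1$-dependence) of the family $\Hi_P(\sigma)$ is exactly Proposition~\ref{c1}, valid on $[0,1]$ when $Y(0)$ is not $g$-orthogonal to $P$ or $Y$ is singular with $Y(0)\perp P$, and on $\left]0,1\right]$ when $Y$ is admissible; essential positivity of $\hat I_\sigma|_{\Hi_P(\sigma)\times\Hi_P(\sigma)}$ (hence finiteness of its index) is Proposition~\ref{finiteindex}. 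To fix $\sigma_0$ and secure the nondegeneracy required at the left endpoint, I would combine Lemma~\ref{positivedefinite}, \cite[Proposition 4.10]{GiaMasPic01} and the upper semicontinuity of Corollary~\ref{thm:4bisbis} to choose $\sigma_0>0$ so small that $\hat I_\sigma$ is positive definite on $\Hi^*_P(\sigma)$, a fortiori on $\Hi_P(\sigma)$, for every $\sigma\in\left]0,\sigma_0\right]$: this simultaneously guarantees that $\gamma$ has no $(P,Y)$-pseudo focal point in $\left]0,\sigma_0\right]$ and that $\hat I_{\sigma_0}|_{\Hi_P(\sigma_0)\times\Hi_P(\sigma_0)}$ is nondegenerate with index $0$.

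Next I would build the comparison maps. For $\sigma_0\le s<t\le 1$ set $\varphi_{\{s,t\}}=\Phi_t^{-1}\circ E_{s,t}\circ\Phi_s\colon\Hi_P(s)\to\Hi_P(t)$, where $E_{s,t}\colon\mathcal W_P(s)\to\mathcal W_P(t)$ extends a vector field by $0$ on $[s,t]$; since any field in $\mathcal W_P(s)$ vanishes at $s$, its extension is again an $H^1$ field in $\mathcal W_P(t)$, so $\varphi_{\{s,t\}}$ is well defined, injective, with closed range. Because the integrand of $I_t$ vanishes on $[s,t]$ and the boundary term at $0$ is unaffected, $I_t\big(E_{s,t}\widetilde V,E_{s,t}\widetilde W\big)=I_s(\widetilde V,\widetilde W)$, hence $\hat I_t\big(\varphi_{\{s,t\}}V,\varphi_{\{s,t\}}W\big)=\hat I_s(V,W)$, which is hypothesis~\eqref{itm:2}. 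Hypothesis~\eqref{itm:3} amounts to a unique continuation statement: a $(P,Y)$-pseudo Jacobi field on $[0,t]$ that vanishes on a subinterval $[s,t]$ must vanish identically. Since $\varphi_{\{s,t\}}(\Hi_P(s))$ grows as $s\nearrow t$, it suffices to treat $s$ arbitrarily close to $t$; evaluating $V''-R[V]=\lambda\,m(Y)$ (see \eqref{mdey}) on $[s,t]$ gives $\lambda\,m(Y)\equiv 0$ there, so in the singular case $\lambda=0$ by definition and $V$ is a genuine Jacobi field with $V(s)=V'(s)=0$, hence $V\equiv0$; in the remaining cases one must argue that the constant $\lambda$ itself vanishes --- using that for an admissible $Y$ the zero set of $m(Y)$ cannot fill a subinterval adjacent to $t$ --- so that again $V$ is a genuine Jacobi field vanishing to second order at $s$. \textbf{This verification of hypothesis~\eqref{itm:3} is the main obstacle}: it is automatic in the singular case, but delicate in the non-singular one, where $\lambda$ has to be controlled against the behaviour of $m(Y)$.

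Finally I would read off the statement. Part (b) of Proposition~\ref{thm:MIT} gives that the set of $\sigma\in\left]\sigma_0,1\right[$ with $\Ker\big(\hat I_\sigma|_{\Hi_P(\sigma)\times\Hi_P(\sigma)}\big)\ne\{0\}$ is finite, while part (c) yields
\[
\mathrm n_-\big(\hat I_1|_{\Hi_P(1)\times\Hi_P(1)}\big)=\mathrm n_-\big(\hat I_{\sigma_0}|_{\Hi_P(\sigma_0)\times\Hi_P(\sigma_0)}\big)+\sum_{\sigma\in\left]\sigma_0,1\right[}\dim\big[\Ker\big(\hat I_\sigma|_{\Hi_P(\sigma)\times\Hi_P(\sigma)}\big)\big].
\]
The first term on the right vanishes and, by the choice of $\sigma_0$, there are no $(P,Y)$-pseudo focal points in $\left]0,\sigma_0\right]$, so the sum effectively ranges over all of $\left]0,1\right[$. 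Transporting through $\Phi_\sigma$ and applying Proposition~\ref{thm:prop2.6}, each summand $\dim\big[\Ker\big(\hat I_\sigma|_{\Hi_P(\sigma)\times\Hi_P(\sigma)}\big)\big]$ equals the dimension of the space of $(P,Y)$-pseudo Jacobi fields on $[0,\sigma]$ vanishing at $\sigma$, i.e.\ the multiplicity of $\sigma$ as a $(P,Y)$-pseudo focal point. Since $\hat I_1=I$ and $\Hi_P(1)=\mathcal W_P(1)$, the left-hand side is the Morse index of $I|_{\Hi_P(1)\times\Hi_P(1)}$, and the displayed identity is precisely the asserted equality (pseudo focal points at $t=1$, if present, not entering the index, in accordance with the Riemannian convention).
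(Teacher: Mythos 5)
Your proposal follows the paper's proof almost step for step: the same reduction to the Abstract Morse Index Theorem (Proposition~\ref{thm:MIT}) with $H_\sigma=\Hi_P(\sigma)$, the same extension-by-zero comparison maps $\varphi_{\{s,t\}}=\Phi_t^{-1}\circ E_{\{s,t\}}\circ\Phi_s$, and the same ingredients (Proposition~\ref{c1} for $C^1$-dependence of the domains, Proposition~\ref{finiteindex} for essential positivity, Lemma~\ref{positivedefinite} for the vanishing of the initial index, and Proposition~\ref{thm:prop2.6} to identify kernel elements with pseudo Jacobi fields). The only cosmetic deviation is that you run the argument with $\hat I_\sigma$ over a compact interval $[\sigma_0,1]$ with $\sigma_0>0$ in every case, whereas the paper uses $C_\sigma=\sigma\hat I_\sigma$, which extends continuously to $\sigma=0$; when $Y(0)$ is not orthogonal to $P$ the paper starts directly at $\sigma=0$ where $C_0$ is positive definite, while you instead propagate positivity to a small $\sigma_0>0$ via upper semicontinuity. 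Both routes are correct and give the same count.

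Where you diverge from the paper is in your honest treatment of hypothesis~\eqref{itm:3} of Proposition~\ref{thm:MIT}. The paper dismisses the verification of both hypotheses on $\varphi_{\{s,t\}}$ as ``a straightforward computation.'' You correctly observe that hypothesis~\eqref{itm:2} is indeed immediate, and that hypothesis~\eqref{itm:3} reduces, after transport via $\Phi_t$ and Proposition~\ref{thm:prop2.6}, to a unique-continuation statement for $(P,Y)$-pseudo Jacobi fields vanishing on $[s,t]$; since such a field satisfies $V(s)=V'(s)=0$ and $\lambda\,m(Y)\equiv0$ on $[s,t]$, the singular case (where $\lambda=0$ by definition) is truly automatic. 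You are right to flag the non-singular case as the delicate point. However, the fix you propose --- ``for an admissible $Y$ the zero set of $m(Y)$ cannot fill a subinterval adjacent to $t$'' --- is not justified by anything in Definition~\ref{thm:defadmissible}: admissibility only says $m(Y)(0)\neq0$, and nothing in that definition (nor in the hypothesis ``$Y(0)$ not orthogonal to $P$,'' which you do not address at all in this step) precludes $m(Y)$ from vanishing identically on a subinterval $[s,t]$ with $0<s<t\le1$. As written, your verification of hypothesis~\eqref{itm:3} is therefore incomplete outside the singular case, and you should either supply the missing argument (for instance by observing that, since $\varphi_{\{s,t\}}(H_s)\subset\varphi_{\{s',t\}}(H_{s'})$ for $s'>s$, one only needs~\eqref{itm:3} at degeneracy instants $s_0$ for $t$ in a right neighborhood, and then control $\lambda$ there) or acknowledge this as a point left unproved, just as the paper does implicitly.
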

\begin{proof}
It follows from the Abstract Morse Index Theorem given in Proposition~\ref{thm:MIT} by taking
$H_s=\Hi_P(s)\subset H^1([0,1];\R^n)$ and $$B_s=C_s:H^1([0,1];\R^n)\times H^1([0,1];\R^n)\to \R$$
(see Eq. \eqref{isigmaenH} and the following paragraph)  defined for  $s\in[0,1]$ when $Y(0)$
is not orthogonal to $P$ and on $[\varepsilon,1]$ with $\varepsilon>0$ small enough such that
$\hat{I}_s$ is positive definite in $\Hi_P(s)\times \Hi_P(s)$ when $s\in(0,\varepsilon]$ in the
other case. It is easy to prove that $C_s$ is continuous. Moreover, we choose  $\varphi_{\{s,t\}}:H_s\to H_t$
with $s<t$ defined as follows. Let $E_{\{s,t\}}:H^1_P([0,s];\R^n)\to H^1_P([0,t];\R^n)$ be the map that
carries an element of $H^1_P([0,s];\R^n)$ to its extension to zero in $[s,t]$; this gives
an element in $H^1_P([0,t];\R^n)$.
Then $\varphi_{\{s,t\}}(V)=\Phi_t^{-1}\cdot E_{\{s,t\}}\cdot\Phi_s(V)$. It is a straightforward computation  to verify that $\varphi_{\{s,t\}}$ satisfies the hypothesis in Proposition \ref{thm:MIT}. By Proposition~\ref{c1} the path $s\to\Hi_P(s)$ is smooth, and by \cite[Proposition 4.9]{BenPicpre} this implies the continuity as closed subspaces in the sense of Section~\ref{abstractMorse}; by Proposition~\ref{finiteindex} the symmetric bilinear forms $C_s$ are essentially positive and by Lemma~\ref{positivedefinite} the initial contribution is always zero, so that the index theorem follows for $\hat{I}_1$ and of course for $I=I_1$ restricted to ${\mathcal W}_P(1)\times {\mathcal W}_P(1)$ (we observe that ${\mathcal W}_P(1)=\Hi_P(1)$). By Proposition~\ref{thm:prop2.6} the dimensions of the kernel of $I_s$ restricted to ${\mathcal W}_P(s)\times {\mathcal W}_P(s)$ coincides with the number of $(P,Y)$-pseudo focal points counted with multiplicity.
\end{proof}
\subsection{The case of two variable endpoints.}\label{twoendopoints}
 We will use the idea of \cite[Theorem II.6]{PiTau99} to extend the Morse index theorem to the situation in that the two endpoints are variable. In the context of Morse-Sturm systems we have to add to the initial data $(g,R,Y,P,S)$ (after Eq. \eqref{indexform}) another subspace $Q$ of $\R^n$ and a $g$-symmetric linear map
$S^Q:Q\to Q$. Moreover, we rename $S$ as $S^P$. Thus, we have the initial data $(g,R,Y,P,Q,S^P,S^Q)$ and the index form
\begin{align}\label{indexform2ends}
I_{\{P,Q\}}(V,W)=\int_0^1& \big[g(V', W')+g\big(R(t)[V],W\big)\big]\,\de t\nonumber\\&
+g\big(S^Q[V(1)],W(1)\big)-g\big(S^P[V(0)],W(0)\big),
\end{align}
defined for $V$ and $W$ in $H^1_{\{P,Q\}}([0,1];\R^n)$, that is, the functions $V$ in $H^1([0,1];\R^n)$, such that $V(0)\in P$ and $V(1)\in Q$. Denote
\begin{equation*}
\Hi_{\{P,Q\}}=\big\{V\in H^1_{\{P,Q\}}([0,1];\R^n)\,:\, g(V'(t),Y(t))- g(V(t),Y'(t))=0\big\}
\end{equation*}
and
\begin{equation*}
\Hi^*_{\{P,Q\}}=\big\{V\in H^1_{\{P,Q\}}([0,1];\R^n)\,:\, g(V'(t),Y(t))- g(V(t),Y'(t))=C_V\in\R\big\}.
\end{equation*}
Furthermore, let ${\mathcal J}^*_Q$ be the subspace of $P$-Jacobi fields contained in $\Hi^*_{\{P,Q\}}$, ${\mathcal J}_Q={\mathcal J}^*_Q\cap\Hi_{\{P,Q\}}$ and $F$ the symmetric bilinear form obtained as the restriction of $I_{\{P,Q\}}$ to ${\mathcal J}^*_Q$. By applying integration by parts we obtain that
\begin{equation*}
F(J_1,J_2)=g\big(S^Q[J_1(1)],J_2(1)\big)+g\big(J'_1(1),J_2(1)\big),
\end{equation*}
where $J_1$ and $J_2$ are in ${\mathcal J}^*_Q$. Finally,
\begin{equation*}
{\mathcal J}[t]=\{J(t)\in \R^n\, :\, \text{$J$ is a $P$-Jacobi field and $C_J=0$} \}
\end{equation*}
and
\begin{equation*}
{\mathcal J}^*[t]=\{J(t)\in \R^n\, :\, \text{$J$ is a $P$-Jacobi field} \}.
\end{equation*}
Adapting \cite[Theorem II.6]{PiTau99} to this situation we obtain the following.
\begin{theorem}\label{twoendsindextheorem}
Assume, that ${\mathcal J}[1]\supseteq Q$. Then, the index of $I_{\{P,Q\}}$
restricted to $\Hi_{\{P,Q\}}\times \Hi_{\{P,Q\}}$ is equal to the sum
of the index of $I|_{\Hi_P(1)\times \Hi_P(1)}$ and the index of $F|_{{\mathcal J}_Q\times {\mathcal J}_Q}$.
\end{theorem}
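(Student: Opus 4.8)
The plan is to adapt the orthogonal splitting argument underlying \cite[Theorem II.6]{PiTau99}. Since $Q$ is a subspace, $0\in Q$, so $\Hi_P(1)=\mathcal W_P(1)$ sits inside $\Hi_{\{P,Q\}}$ as the subspace of those fields that vanish at $t=1$; on this subspace the boundary term $g\big(S^Q[V(1)],W(1)\big)$ in \eqref{indexform2ends} drops out and $I_{\{P,Q\}}$ restricts exactly to the form $I=I_1$ of \eqref{indexform}. The hypothesis $\mathcal J[1]\supseteq Q$ is precisely the assertion that the evaluation $\mathcal J_Q\ni J\mapsto J(1)\in Q$ is onto: for $q\in Q$ there is a $P$-Jacobi field $J$ with $C_J=0$ and $J(1)=q$, and any such $J$ lies in $\mathcal J_Q$. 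This yields the decomposition
\[\Hi_{\{P,Q\}}=\Hi_P(1)+\mathcal J_Q\]
(not a priori a direct sum): given $V\in\Hi_{\{P,Q\}}$, I would choose $J\in\mathcal J_Q$ with $J(1)=V(1)$; since $C_J=0$ the Jacobi field $J$ satisfies the homogeneous constraint $g(J',Y)-g(J,Y')=0$, hence $V-J\in\Hi_P(1)$ and $V=(V-J)+J$.

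The second ingredient is the $I_{\{P,Q\}}$-orthogonality of the two pieces. For $V_0\in\Hi_P(1)$ and $J\in\mathcal J_Q$, integrating by parts in \eqref{indexform2ends} and using $V_0(1)=0$, $J''=R(t)[J]$ and the $g$-symmetry of $R(t)$ and of $S^P$, one gets
\[I_{\{P,Q\}}(V_0,J)=-g\big(V_0(0),\,J'(0)+S^P[J(0)]\big)=0,\]
because $V_0(0)\in P$ while $J'(0)+S^P[J(0)]\in P^\bot$ by the initial conditions \eqref{cauchy} of a $P$-Jacobi field. Put $N=\Hi_P(1)\cap\mathcal J_Q$, the finite dimensional space of $P$-Jacobi fields $J$ with $C_J=0$ and $J(1)=0$. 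From the orthogonality just established, together with the decomposition, each $n\in N$ satisfies $I_{\{P,Q\}}(n,\cdot)\equiv0$ on $\Hi_{\{P,Q\}}$, on $\Hi_P(1)$ and on $\mathcal J_Q$ (recalling that $F\vert_{\mathcal J_Q}=I_{\{P,Q\}}\vert_{\mathcal J_Q}$); hence $N$ is contained in each of $\Ker\big(I_{\{P,Q\}}\vert_{\Hi_{\{P,Q\}}}\big)$, $\Ker\big(I\vert_{\Hi_P(1)}\big)$ and $\Ker\big(F\vert_{\mathcal J_Q}\big)$.

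With $N$ lying in all three kernels, I would pass to the quotient $\overline H=\Hi_{\{P,Q\}}/N$, on which all the forms descend and the index is unchanged, and where the images of $\Hi_P(1)$ and $\mathcal J_Q$ give an honest $I_{\{P,Q\}}$-orthogonal direct sum $\overline H=\overline{\Hi_P(1)}\oplus\overline{\mathcal J_Q}$ (their intersection was precisely $N$). On $\overline{\Hi_P(1)}$ the induced form is still essentially positive --- this is Proposition~\ref{finiteindex} applied at $\sigma=1$, where $\hat I_1=I$ and $\mathcal W_P(1)=\Hi_P(1)$ --- hence Fredholm with finite index. Splitting $\overline{\Hi_P(1)}$ spectrally and absorbing its finite dimensional kernel into the finite dimensional summand $\overline{\mathcal J_Q}$, one exhibits $\overline H$ as an $I_{\{P,Q\}}$-orthogonal direct sum of a finite dimensional negative definite space, a finite dimensional space carrying the form $0\oplus F\vert_{\overline{\mathcal J_Q}}$, and a positive definite space. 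Additivity of $\mathrm n_-$ over orthogonal direct sums then gives $\mathrm n_-\big(I_{\{P,Q\}}\vert_{\Hi_{\{P,Q\}}}\big)=\mathrm n_-\big(I\vert_{\Hi_P(1)}\big)+\mathrm n_-\big(F\vert_{\mathcal J_Q}\big)$, which is the assertion.

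The main obstacle is the decomposition step: it is there that the surjectivity hypothesis $\mathcal J[1]\supseteq Q$ is essential, and that the linear constraints $C_V=0$, $C_J=0$ defining $\Hi_{\{P,Q\}}$ and $\mathcal J_Q$ must be tracked carefully (a Jacobi field only lands in $\Hi_{\{P,Q\}}$ after checking $C_J=0$, and the pieces $V-J$ only land in $\Hi_P(1)$ for the same reason). Once the decomposition and the orthogonality are in place, the verification that $N$ sits inside the three kernels and the ensuing passage to the quotient reduce the statement to the elementary index computations recalled in Section~\ref{abstractMorse}.
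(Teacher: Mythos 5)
Your proof is correct and takes essentially the same approach as the paper: both hinge on the decomposition $\Hi_{\{P,Q\}}=\Hi_P(1)+\mathcal J_Q$ (which the paper makes a direct sum by choosing a complement $\mathcal J_1$ of $\mathcal J_0=\Hi_P(1)\cap\mathcal J_Q$ inside $\mathcal J_Q$, whereas you pass to the quotient by $N=\mathcal J_0$, an equivalent packaging), on the $I_{\{P,Q\}}$-orthogonality of the two pieces via integration by parts and the $P$-Jacobi boundary conditions, and on additivity of the index over orthogonal sums together with the observation that $\mathcal J_0\subset\Ker F$.
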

\begin{proof}
Denote ${\mathcal J}_0=\{J\in {\mathcal J}_
Q\, :\, J[1]=0\}$ and choose a complementary subspace ${\mathcal J}_1$ in ${\mathcal J}_Q$ such that ${\mathcal J}_Q={\mathcal J}_0\oplus {\mathcal J}_1$. As ${\mathcal J}[1]\supseteq Q$, we have that $\Hi_{\{P,Q\}}=\Hi_P(1)\oplus{\mathcal J}_1$ and it is easy to see that this decomposition is $I_{\{P,Q\}}$-orthogonal. It follows that the index of $I_{\{P,Q\}}|_{\Hi_{\{P,Q\}}\times \Hi_{\{P,Q\}}}$ is equal to the index of $I|_{\Hi_P(1)\times\Hi_P(1)}$ plus the index of $F|_{{\mathcal J}_1\times {\mathcal J}_1}$. The theorem follows from the observation that ${\mathcal J}_0$ is contained in the kernel of $F$, so that the index of $F|_{{\mathcal J}_1\times {\mathcal J}_1}$ coincides with the one of $F|_{{\mathcal J}_Q\times {\mathcal J}_Q}$.
\end{proof}
\subsection{The Morse index theorem in the geometrical setup.}
\label{sub:geomsetup}
As it was observed in Section \ref{stationary}, the index form associated to a geodesic in a Lorentzian manifold $(M,g)$ can be reduced to
the index form of a Morse-Sturm system. When we consider the energy functional defined in the manifold $\Omega_{\{\mathcal P,\mathcal Q\}}M$ of $H^1$-curves joining two given submanifolds $\mathcal P$ and $\mathcal Q$ of $M$, the critical points are geodesics $\gamma:[0,1]\to M$ orthogonal to $\mathcal P$ and $\mathcal Q$ in the endpoints. Furthermore, the associated index form is defined for $V$ and $W$ in $T_\gamma\big(\Omega_{\{\mathcal P,\mathcal Q\}}M\big)$ and it is given by
\begin{align}\label{index2}
I_{\{\gamma,\mathcal P, \mathcal Q\}}(V,W)=\int_0^1 &\big[g(V', W')+g\big(R(\dot\gamma,V)\dot\gamma,W\big)\big]\,\de t\nonumber\\&-g\big(S_{\dot\gamma(0)}^{\mathcal P}[V(0)],W(0)\big)
+g\big(S_{\dot\gamma(1)}^{\mathcal Q}[V(1)],W(1)\big),
\end{align}
where $S_{\dot\gamma(0)}^{\mathcal P}$ and $S_{\dot\gamma(1)}^{\mathcal Q}$ are the second fundamental forms of $\mathcal P$ and $\mathcal Q$ in the directions of $\dot\gamma(0)$ and $\dot\gamma(1)$ respectively. We observe that the tangent space to $\Omega_{\{\mathcal P,\mathcal Q\}}M$ in $\gamma$ can be described as
\begin{multline*}
T_\gamma\big(\Omega_{\{\mathcal P,\mathcal Q\}}M\big)=\{\text{$V$\, :\, $V$ is a $H^1$-vector field along $\gamma$,}\\ \text{ $V(0)\in T_{\gamma(0)}\mathcal P$ and  $V(1)\in T_{\gamma(1)}\mathcal Q$}\}.
\end{multline*}

 Assume that there exists a timelike Jacobi field $\mathcal Y$ along $\gamma$. In order to establish the Morse index theorem we consider the subspaces
\[\mathcal H^{\{\gamma,\mathcal P,\mathcal Q\}}_*=
\big\{V\in T_\gamma\big(\Omega_{\{\mathcal P,\mathcal Q\}}M\big)\,
:\, g(V',\mathcal Y)-g(V,\mathcal Y')=\text{const.}\big\}\]
and
\[\mathcal H^{\{\gamma,\mathcal P,\mathcal Q\}}_0=\big\{V
\in T_\gamma\big(\Omega_{\{\mathcal P,\mathcal Q\}}M\big)\, :\, g(V',\mathcal Y)-g(V,\mathcal Y')=0\big\}.\]
We observe that we just suppress $\mathcal Q$ in all the notations when it is a point.
We know that the index of $I_{\{\gamma,\mathcal P, \mathcal Q\}}$ given in \eqref{index2} restricted to $\mathcal H^{\{\gamma,\mathcal P,\mathcal Q\}}_*\times \mathcal H^{\{\gamma,\mathcal P,\mathcal Q\}}_*$ and $\mathcal H^{\{\gamma,\mathcal P,\mathcal Q\}}_0\times \mathcal H^{\{\gamma,\mathcal P,\mathcal Q\}}_0$ is finite. Moreover the difference between the two restrictions is $1$ or $0$, because $\mathcal H^{\{\gamma,\mathcal P,\mathcal Q\}}_0$ is a codimensional-one subspace of $\mathcal H^{\{\gamma,\mathcal P,\mathcal Q\}}_*$, and we call this difference $\varepsilon_{\{\gamma,\mathcal P,\mathcal Q\}}$.
If we fix a parallel orthonormal frame along $\gamma$, we can get the initial data $(g,R,Y,P,Q,S^P,S^Q)$ as the corresponding coordinate version of $(g,R(\dot\gamma,\cdot)\dot\gamma,\mathcal Y, T_{\gamma(0)}\mathcal P,T_{\gamma(1)}\mathcal Q,S_{\dot\gamma(0)}^{\mathcal P},S_{\dot\gamma(1)}^{\mathcal Q})$ in such a way that the index \eqref{indexform2ends} is obtained from \eqref{index2} when considering the coordinates in the parallel orthonormal frame. Obviously, $\mathcal P$-focal points of $\gamma$ are in correspondence with $P$-focal points of the data $(g,R,Y,P,Q,S^P,S^Q)$, so that we can bring the Morse index theorem for Morse-Sturm systems to the geometrical setup. We extend all the definitions related to $P$-Jacobi fields and $(P,Y)$-pseudo Jacobi fields of a Morse-Sturm system (see Subsections \ref{parallelfocal} and \ref{admisingu} 
and Definition \ref{pseudo}) to $\mathcal P$-Jacobi fields and $(\mathcal P,\mathcal Y)$-pseudo Jacobi fields in the obvious way, so as the Definition \ref{defpseudo} of  $(\mathcal P,\mathcal Y)$-pseudo focal points. Furthermore, we will use ${\mathcal J}_{\mathcal Q}$, ${\mathcal J}^*_{\mathcal Q}$ and $\mathcal F$ to denote the geometrical objects correponding to ${\mathcal J}_{Q}$, ${\mathcal J}^*_{Q}$ and $F$ defined in Subsection \ref{twoendopoints}, and the same notation to the geometrical counterpart of ${\mathcal J}[t]$ and ${\mathcal J}^*[t]$.
In the next results we assume that $\mathcal Y(0)$ is not orthogonal to $\mathcal P$ or that $\mathcal Y$ is singular or admissible.
\begin{theorem}\label{morsegeodesic}
Assume that $\mathcal J[1]\supseteq T_{\gamma(1)}\mathcal Q$. Then the Morse index of ${I_{\{\gamma,{\mathcal P}, \mathcal Q\}}}$ given in \eqref{index2} restricted to $\mathcal H^{\{\gamma,\mathcal P,\mathcal Q\}}_0\times \mathcal H^{\{\gamma,\mathcal P,\mathcal Q\}}_0$
coincides with the sum of the number  of $(\mathcal P,\mathcal Y)$-pseudo focal points  counted with multiplicity of the geodesic $\gamma$ and the index of ${\mathcal F}|_{{\mathcal J}_{\mathcal Q}\times {\mathcal J}_{\mathcal Q}}$.
\end{theorem}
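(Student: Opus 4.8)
The plan is to obtain Theorem~\ref{morsegeodesic} by transporting it to the Morse-Sturm setting and then concatenating Theorem~\ref{twoendsindextheorem} and Theorem~\ref{Morseindex}; no new analytic input is needed.

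First I would fix a parallel $g$-orthonormal frame $\{E_1,\dots,E_n\}$ along $\gamma$ and let $(g,R,Y,P,Q,S^P,S^Q)$ be the coordinate expression of $\big(g,R(\dot\gamma,\cdot)\dot\gamma,\mathcal Y,T_{\gamma(0)}\mathcal P,T_{\gamma(1)}\mathcal Q,S^{\mathcal P}_{\dot\gamma(0)},S^{\mathcal Q}_{\dot\gamma(1)}\big)$, exactly as in Subsection~\ref{sub:geomsetup}. Since the frame is parallel, covariant differentiation along $\gamma$ becomes ordinary differentiation in $\R^n$ and $g$ is carried to the constant bilinear form $g$; hence the identification is faithful for every object in the statement. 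Concretely I would check that it carries $T_\gamma\big(\Omega_{\{\mathcal P,\mathcal Q\}}M\big)$ to $H^1_{\{P,Q\}}([0,1];\R^n)$, the linear constraint $g(V',\mathcal Y)-g(V,\mathcal Y')=0$ to $g(V',Y)-g(V,Y')=0$ (so $\mathcal H^{\{\gamma,\mathcal P,\mathcal Q\}}_0$ goes to $\Hi_{\{P,Q\}}$), the index form $I_{\{\gamma,\mathcal P,\mathcal Q\}}$ of \eqref{index2} to $I_{\{P,Q\}}$ of \eqref{indexform2ends}, $\mathcal P$-Jacobi fields to $P$-Jacobi fields (so $\mathcal J[1]\supseteq T_{\gamma(1)}\mathcal Q$ becomes $\mathcal J[1]\supseteq Q$, and $\mathcal F$, ${\mathcal J}_{\mathcal Q}$, ${\mathcal J}^*_{\mathcal Q}$ go to $F$, ${\mathcal J}_Q$, ${\mathcal J}^*_Q$), and $(\mathcal P,\mathcal Y)$-pseudo Jacobi fields and pseudo focal instants to $(P,Y)$-pseudo Jacobi fields and pseudo focal instants \emph{with the same multiplicities} — here one uses that the map $m(\cdot)$ of \eqref{mdey} and the modified initial conditions of Definition~\ref{pseudo} involve only $g$ and covariant derivatives, which the frame respects. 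In particular every index and nullity is preserved.

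With this dictionary in hand the argument is immediate. By the standing assumption of this subsection, $\mathcal Y(0)$ is not orthogonal to $\mathcal P$, or $\mathcal Y$ is admissible, or $\mathcal Y$ is singular, so the hypotheses of Theorem~\ref{Morseindex} hold for the data $(g,R,Y,P,S^P)$; hence $\mathrm n_-\big(I\vert_{\Hi_P(1)\times\Hi_P(1)}\big)$ equals the number of $(P,Y)$-pseudo focal instants counted with multiplicity. Since moreover $\mathcal J[1]\supseteq Q$, Theorem~\ref{twoendsindextheorem} gives
\[
\mathrm n_-\big(I_{\{P,Q\}}\vert_{\Hi_{\{P,Q\}}\times\Hi_{\{P,Q\}}}\big)=\mathrm n_-\big(I\vert_{\Hi_P(1)\times\Hi_P(1)}\big)+\mathrm n_-\big(F\vert_{{\mathcal J}_Q\times{\mathcal J}_Q}\big).
\]
Substituting the first identity into the second and translating everything back through the frame yields the claimed equality for $I_{\{\gamma,\mathcal P,\mathcal Q\}}$ restricted to $\mathcal H^{\{\gamma,\mathcal P,\mathcal Q\}}_0\times\mathcal H^{\{\gamma,\mathcal P,\mathcal Q\}}_0$.

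I do not expect a genuinely hard step here: the analytic content has already been absorbed — essential positivity of the restricted index form (Proposition~\ref{finiteindex}), the $C^1$-dependence of the spaces $\Hi_P(\sigma)$ (Proposition~\ref{c1}), the abstract Morse index theorem (Proposition~\ref{thm:MIT}) and the two-endpoint reduction (Theorem~\ref{twoendsindextheorem}). The only point demanding care is the bookkeeping of the frame reduction, i.e. confirming that it is an isomorphism of all the structures listed above, so that focal/pseudo focal data and the auxiliary finite-dimensional form $\mathcal F$ are transported correctly; this is routine but is the link on which the whole statement rests.
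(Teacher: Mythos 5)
Your proof is correct and follows exactly the paper's own argument: the paper's proof of Theorem~\ref{morsegeodesic} is precisely the one-line concatenation of Theorem~\ref{Morseindex} with Theorem~\ref{twoendsindextheorem} via the parallel-frame reduction to the Morse--Sturm data already set up in Subsection~\ref{sub:geomsetup}. You simply spell out the bookkeeping of that reduction more explicitly than the paper does.
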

\begin{proof}
It follows from Theorems~\ref{Morseindex} and \ref{twoendsindextheorem}.
\end{proof}
\begin{corollary}\label{conjugatecase}
The number of $(\mathcal P,\mathcal Y)$-pseudo focal points counted with multiplicity is finite.\qed
\end{corollary}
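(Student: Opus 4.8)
The plan is to read this off directly from the Morse index theorem already established. First I would observe that the set of $(\mathcal P,\mathcal Y)$-pseudo focal instants, and the multiplicity attached to each of them, depend only on the data $(\gamma,\mathcal P,\mathcal Y)$ — equivalently $(g,R,Y,P,S^{\mathcal P})$ — and not on any choice of a final submanifold $\mathcal Q$: indeed a $(\mathcal P,\mathcal Y)$-pseudo Jacobi field is defined purely through Definition~\ref{pseudo}, and the multiplicity of $t_0$ is the dimension of the space of such fields vanishing at $t_0$, which by Proposition~\ref{thm:prop2.6} equals $\dim\Ker\big(I_{t_0}\vert_{\mathcal W_P(t_0)\times\mathcal W_P(t_0)}\big)$. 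This last number is finite because the restricted index form is essentially positive, hence Fredholm, by Proposition~\ref{finiteindex}. So the multiplicity of each individual instant is already a well-defined finite number, and the content of the corollary is that only finitely many instants contribute and that the total is finite.

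To obtain this, I would apply Theorem~\ref{morsegeodesic} (equivalently Theorem~\ref{Morseindex}) in the fixed-endpoint case $\mathcal Q=\{\gamma(1)\}$, which is legitimate since the pseudo focal count does not see $\mathcal Q$. Then $T_{\gamma(1)}\mathcal Q=\{0\}$, so the hypothesis $\mathcal J[1]\supseteq T_{\gamma(1)}\mathcal Q$ holds trivially, and $\mathcal F\vert_{\mathcal J_{\mathcal Q}\times\mathcal J_{\mathcal Q}}$ vanishes identically (each $J\in\mathcal J_{\mathcal Q}$ satisfies $J(1)=0$, whence $\mathcal F(J_1,J_2)=g\big(J_1'(1),J_2(1)\big)=0$). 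Thus Theorem~\ref{morsegeodesic} reduces to the statement that the number of $(\mathcal P,\mathcal Y)$-pseudo focal points counted with multiplicity equals the Morse index of $I_{\{\gamma,\mathcal P\}}$ restricted to $\mathcal H^{\{\gamma,\mathcal P\}}_0\times\mathcal H^{\{\gamma,\mathcal P\}}_0$. But by Proposition~\ref{finiteindex} the operator representing $\hat I_1$ on $\Hi_P(1)$ is essentially positive, and an essentially positive bilinear form has finite index by Lemma~\ref{lem4}. Hence the count on the left-hand side is finite, under the standing assumption that $\mathcal Y(0)$ is not orthogonal to $\mathcal P$ or that $\mathcal Y$ is admissible or singular.

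An equivalent route, which merely unwinds the proof of Theorem~\ref{Morseindex}, is to invoke Proposition~\ref{thm:MIT} directly: with $H_s=\Hi_P(s)$ and $B_s=C_s$, part~(b) already gives that the set of degeneracy instants in $\left]0,1\right[$ is finite, part~(c) gives that the sum over these instants of $\dim\Ker\big(C_s\vert_{\Hi_P(s)\times\Hi_P(s)}\big)$ equals $\mathrm n_-\big(C_1\vert_{\Hi_P(1)\times\Hi_P(1)}\big)-\mathrm n_-\big(C_0\vert_{\Hi_P(0)\times\Hi_P(0)}\big)$, which is finite by Proposition~\ref{finiteindex} and Lemma~\ref{lem4} (and in fact the second term is zero by Lemma~\ref{positivedefinite}), and Proposition~\ref{thm:prop2.6} identifies each kernel dimension with the multiplicity of the corresponding pseudo focal instant. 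I do not anticipate any genuine obstacle here; the only point that deserves care is the well-definedness of the phrase \emph{counted with multiplicity} — that each multiplicity is finite and intrinsic — which is exactly what the Fredholmness in Proposition~\ref{finiteindex} and the intrinsic description of the kernel in Proposition~\ref{thm:prop2.6} provide, so that the equality furnished by Theorem~\ref{morsegeodesic} can legitimately be turned around into a finite bound.
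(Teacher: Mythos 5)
Your proposal is correct and coincides with the (unwritten) argument the paper has in mind: the corollary is placed immediately after Theorem~\ref{morsegeodesic} precisely because, taking $\mathcal Q$ to be a point so that the hypothesis $\mathcal J[1]\supseteq T_{\gamma(1)}\mathcal Q$ is vacuous and $\mathcal F$ vanishes, the pseudo focal count equals the Morse index of $I_{\{\gamma,\mathcal P\}}$ on $\mathcal H_0^{\{\gamma,\mathcal P\}}$, which is finite by Proposition~\ref{finiteindex} and Lemma~\ref{lem4}. Your additional observations — that the count is intrinsic to $(\gamma,\mathcal P,\mathcal Y)$ independently of $\mathcal Q$, and that one may equally read the finiteness off part~(b) of Proposition~\ref{thm:MIT} — are accurate and simply make explicit what the paper leaves to the reader.
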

\begin{corollary}
The Morse index of $I_{\{\gamma,{\mathcal P}\}}$ restricted to ${\Hi^{\{\gamma,\mathcal P\}}_*\times \Hi^{\{\gamma,\mathcal P\}}_*}$ coincides or it is one unit larger than the number of $(\mathcal P,\mathcal Y)$-pseudo focal points counted with multiplicity of the geodesic $\gamma$.
\end{corollary}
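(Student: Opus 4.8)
The plan is to reduce the statement to the Morse index theorem already proved for the codimension-one subspace $\mathcal H^{\{\gamma,\mathcal P\}}_0$, and then to invoke the elementary fact that enlarging the domain of a symmetric bilinear form by one dimension can raise its index by at most one. \emph{Step 1.} I would first apply Theorem~\ref{morsegeodesic} in the special case in which $\mathcal Q$ reduces to the point $\gamma(1)$. Then $T_{\gamma(1)}\mathcal Q=\{0\}$, so the hypothesis $\mathcal J[1]\supseteq T_{\gamma(1)}\mathcal Q$ holds trivially; moreover every element of $\mathcal J_{\mathcal Q}$ is a $\mathcal P$-Jacobi field with $C_J=0$ vanishing at $t=1$, so $\mathcal J_{\mathcal Q}=\mathcal J_0$ and, by the argument in the proof of Theorem~\ref{twoendsindextheorem}, $\mathcal J_{\mathcal Q}$ is contained in the kernel of $\mathcal F$. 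Hence the index of $\mathcal F|_{\mathcal J_{\mathcal Q}\times\mathcal J_{\mathcal Q}}$ vanishes, and Theorem~\ref{morsegeodesic} (equivalently Theorem~\ref{Morseindex}, since ${\mathcal W}_P(1)=\Hi_P(1)$ corresponds to $\mathcal H^{\{\gamma,\mathcal P\}}_0$) yields that the Morse index $N_0$ of $I_{\{\gamma,\mathcal P\}}$ restricted to $\mathcal H^{\{\gamma,\mathcal P\}}_0\times\mathcal H^{\{\gamma,\mathcal P\}}_0$ equals the number $N$ of $(\mathcal P,\mathcal Y)$-pseudo focal points of $\gamma$ counted with multiplicity; in particular $N$ is finite, as also recorded in Corollary~\ref{conjugatecase}.

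\emph{Step 2.} Since $\mathcal H^{\{\gamma,\mathcal P\}}_0$ has codimension one in $\mathcal H^{\{\gamma,\mathcal P\}}_*$, and both restrictions of $I_{\{\gamma,\mathcal P\}}$ have finite index, I would use the standard estimate
\[
\mathrm{n}_-\big(I_{\{\gamma,\mathcal P\}}|_{\mathcal H^{\{\gamma,\mathcal P\}}_0\times\mathcal H^{\{\gamma,\mathcal P\}}_0}\big)\ \le\ \mathrm{n}_-\big(I_{\{\gamma,\mathcal P\}}|_{\mathcal H^{\{\gamma,\mathcal P\}}_*\times\mathcal H^{\{\gamma,\mathcal P\}}_*}\big)\ \le\ \mathrm{n}_-\big(I_{\{\gamma,\mathcal P\}}|_{\mathcal H^{\{\gamma,\mathcal P\}}_0\times\mathcal H^{\{\gamma,\mathcal P\}}_0}\big)+1.
\]
The left inequality is immediate, because a subspace of $\mathcal H^{\{\gamma,\mathcal P\}}_0$ on which the index form is negative definite is also such a subspace of $\mathcal H^{\{\gamma,\mathcal P\}}_*$. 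For the right inequality, one takes a (finite-dimensional) maximal negative definite subspace $Z\subset\mathcal H^{\{\gamma,\mathcal P\}}_*$; its intersection with $\mathcal H^{\{\gamma,\mathcal P\}}_0$ has codimension at most one in $Z$ and is still negative definite, giving the bound. This is precisely the content of the inequality $0\le\epsilon_{\{\gamma,\mathcal P\}}\le 1$ already noted in the introduction.

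\emph{Conclusion.} Combining the two steps, $\mathrm{n}_-\big(I_{\{\gamma,\mathcal P\}}|_{\mathcal H^{\{\gamma,\mathcal P\}}_*\times\mathcal H^{\{\gamma,\mathcal P\}}_*}\big)=N+\epsilon_{\{\gamma,\mathcal P\}}$ with $\epsilon_{\{\gamma,\mathcal P\}}\in\{0,1\}$, so this index either coincides with or is one unit larger than the number of $(\mathcal P,\mathcal Y)$-pseudo focal points counted with multiplicity, which is the assertion. I do not expect a genuine obstacle here, since the result is essentially a corollary of Theorems~\ref{Morseindex}--\ref{morsegeodesic}; the only point needing a little care is the bookkeeping in Step~1, namely checking that in the one-point case the finite-dimensional correction term $\mathcal F|_{\mathcal J_{\mathcal Q}\times\mathcal J_{\mathcal Q}}$ of Theorem~\ref{morsegeodesic} contributes nothing — which holds because all of its elements already vanish at the final endpoint and therefore lie in the kernel of $\mathcal F$.
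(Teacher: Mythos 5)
Your proposal is correct and is essentially the argument the paper intends (the paper leaves this corollary unproved, as an immediate consequence). Step~1 is exactly Theorem~\ref{Morseindex} (your detour through Theorem~\ref{morsegeodesic} with $\mathcal Q=\{\gamma(1)\}$, verifying the correction term $\mathcal F\vert_{\mathcal J_{\mathcal Q}\times\mathcal J_{\mathcal Q}}$ vanishes, just re-derives it), and Step~2 is the codimension-one estimate $0\le\epsilon_{\{\gamma,\mathcal P\}}\le1$ already recorded in Subsection~\ref{sub:geomsetup}.
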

When there is a singular timelike Jacobi field along the geodesic we can obtain a stronger Riemannian Morse index theorem.
\begin{theorem}\label{morsesingulargeodesic}
Assume that the timelike Jacobi field $\mathcal Y$ is singular, $\mathcal P$ is orthogonal to $\mathcal Y(0)$ and ${\mathcal J}^*[1]\supseteq T_{\gamma(1)}{\mathcal Q}$. Then the Morse index of ${I_{\{\gamma,{\mathcal P}, \mathcal Q\}}}$  in \eqref{index2} restricted to $\mathcal H^{\{\gamma,\mathcal P,\mathcal Q\}}_*\times \mathcal H^{\{\gamma,\mathcal P,\mathcal Q\}}_*$ coincides with the sum  of the number of $\mathcal P$-focal points  counted with multiplicity of the geodesic $\gamma$ and the index of ${\mathcal F}|_{{\mathcal J}^*_{\mathcal Q}\times {\mathcal J}^*_{\mathcal Q}}$.
\end{theorem}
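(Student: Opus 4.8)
The plan is to reduce, by choosing a parallel orthonormal frame along $\gamma$, to the Morse--Sturm setting, and then to run the two-endpoint argument from the proof of Theorem~\ref{twoendsindextheorem} with the \emph{affine} spaces $\mathcal H_*$ in place of the \emph{linear} spaces $\mathcal H_0$. Two facts make this work in the singular case: (i) the restriction of the index form to $\mathcal H^{\{\gamma,\mathcal P\}}_*$ has finite index, equal to the number of $(\mathcal P,\mathcal Y)$-pseudo focal points counted with multiplicity; and (ii) the $(\mathcal P,\mathcal Y)$-pseudo focal points coincide, with the same multiplicities, with the $\mathcal P$-focal points. Fact (ii) is where the hypotheses are genuinely used.

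For (ii) I work with the Morse--Sturm data $(g,R,Y,P,Q,S^P,S^Q)$, $Y$ singular and $Y(0)\perp P$. Since $Y$ is timelike it never vanishes, and singularity means $Y'(s)=\beta(s)\,Y(s)$ for a smooth scalar $\beta$. Given a $P$-Jacobi field $J$, put $\phi:=g(J,Y)$. Then $\phi(0)=0$ because $J(0)\in P$ and $Y(0)\in P^\perp$, and, using that $g(J',Y)-g(J,Y')$ equals the constant $C_J$ together with $Y'=\beta Y$, one gets the first-order equation $\phi'=2\beta\phi+C_J$. Integrating from $\phi(0)=0$ shows that $\phi$ is $C_J$ times a function of $t$ that is strictly positive on $\left]0,1\right]$, so $\phi(t_0)=0$ for some $t_0\in\left]0,1\right]$ if and only if $C_J=0$. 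Hence every $P$-Jacobi field $J$ with $J(t_0)=0$ for some $t_0\in\left]0,1\right]$ has $C_J=0$, and then $J$ satisfies all the requirements in Definition~\ref{pseudo} with $\lambda=0$, i.e.\ it is a $(P,Y)$-pseudo Jacobi field; conversely every $(P,Y)$-pseudo Jacobi field is a $P$-Jacobi field. Thus for each $t_0\in\left]0,1\right]$ the space of $P$-Jacobi fields vanishing at $t_0$ equals the space of $(P,Y)$-pseudo Jacobi fields vanishing at $t_0$, which gives (ii). (When $\gamma$ is spacelike one may also arrange $\mathcal Y\perp\dot\gamma$ via Lemma~\ref{perturbation}, but the computation above does not need it.)

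For (i), observe that since $\mathcal Y$ is singular and $\mathcal P\perp\mathcal Y(0)$, Proposition~\ref{c1} gives $\Hi_P(\sigma)=\Hi^*_P(\sigma)$ for every $\sigma\in[0,1]$; in particular $\mathcal H^{\{\gamma,\mathcal P\}}_*=\mathcal H^{\{\gamma,\mathcal P\}}_0$, and by Theorem~\ref{Morseindex} (applicable because $\mathcal Y$ is singular) together with Proposition~\ref{thm:prop2.6} the index of $I_{\{\gamma,\mathcal P\}}$ on $\mathcal H^{\{\gamma,\mathcal P\}}_*\times\mathcal H^{\{\gamma,\mathcal P\}}_*$ equals the number of $(\mathcal P,\mathcal Y)$-pseudo focal points counted with multiplicity, hence by (ii) the number of $\mathcal P$-focal points counted with multiplicity. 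Finally, arguing exactly as in the proof of Theorem~\ref{twoendsindextheorem} but throughout with the affine spaces: the hypothesis ${\mathcal J}^*[1]\supseteq T_{\gamma(1)}\mathcal Q$ yields an $I_{\{\gamma,\mathcal P,\mathcal Q\}}$-orthogonal decomposition $\mathcal H^{\{\gamma,\mathcal P,\mathcal Q\}}_*=\mathcal H^{\{\gamma,\mathcal P\}}_*\oplus{\mathcal J}^*_1$, where ${\mathcal J}^*_1$ is a complement in ${\mathcal J}^*_{\mathcal Q}$ of $\{J\in{\mathcal J}^*_{\mathcal Q}:J(1)=0\}$ (the orthogonality being the usual integration-by-parts computation that uses the $\mathcal P$-Jacobi initial conditions and the $g$-symmetry of the curvature); since the fields of ${\mathcal J}^*_{\mathcal Q}$ vanishing at $1$ lie in the kernel of $\mathcal F$, it follows that the index of $I_{\{\gamma,\mathcal P,\mathcal Q\}}$ on $\mathcal H^{\{\gamma,\mathcal P,\mathcal Q\}}_*$ equals the index of $I_{\{\gamma,\mathcal P\}}$ on $\mathcal H^{\{\gamma,\mathcal P\}}_*$ plus the index of ${\mathcal F}|_{{\mathcal J}^*_{\mathcal Q}\times {\mathcal J}^*_{\mathcal Q}}$. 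Combined with (i), this is the assertion.

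The main obstacle is (ii): one has to see that, under singularity and $\mathcal P\perp\mathcal Y(0)$, the ``$\mathcal Y$-component'' $g(J,\mathcal Y)$ of a $\mathcal P$-Jacobi field has no zero after $t=0$. Both hypotheses enter here --- singularity turns $g(J,\mathcal Y)$ into the solution of a first-order linear equation forced by the constant $C_J$, and $\mathcal P\perp\mathcal Y(0)$ pins its value at the origin to $0$ --- and this is exactly what makes a vanishing $\mathcal P$-Jacobi field a $(\mathcal P,\mathcal Y)$-pseudo Jacobi field, so that the pseudo focal points of Theorem~\ref{Morseindex} become honest $\mathcal P$-focal points. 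Everything else is a routine adaptation of Theorems~\ref{Morseindex} and~\ref{twoendsindextheorem}.
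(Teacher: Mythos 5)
Your proof is correct, and it is actually more careful than the paper's in two respects. The ODE argument you give in step (ii) --- setting $\phi=g(J,\mathcal Y)$, using singularity to turn the constraint $g(J',\mathcal Y)-g(J,\mathcal Y')=C_J$ into $\phi'=2\beta\phi+C_J$ with $\phi(0)=0$, and concluding $\phi(t)=C_J\cdot(\text{positive})$ for $t>0$ --- is exactly what is needed to justify the paper's terse assertion that ``as $m(\mathcal Y)=0$, $(\mathcal P,\mathcal Y)$-pseudo focal points coincide with $\mathcal P$-focal instants''; the paper leaves this implicit, and without it the coincidence of multiplicities is not obvious (a $\mathcal P$-Jacobi field is a $(\mathcal P,\mathcal Y)$-pseudo Jacobi field only when $C_J=0$). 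More substantially, the paper's proof asserts the two-endpoint equality $\mathcal H^{\{\gamma,\mathcal P,\mathcal Q\}}_*=\mathcal H^{\{\gamma,\mathcal P,\mathcal Q\}}_0$ and $\mathcal J_{\mathcal Q}=\mathcal J^*_{\mathcal Q}$ ``as at the end of Proposition~\ref{c1}'' and then cites Theorem~\ref{morsegeodesic}; but the integration-by-parts there leaves a boundary term $g\big(W(1),\mathcal Y(1)\big)/g\big(\mathcal Y(1),\mathcal Y(1)\big)$ that vanishes only if $\mathcal Y(1)\perp T_{\gamma(1)}\mathcal Q$, which is not among the stated hypotheses (nor is the hypothesis $\mathcal J[1]\supseteq T_{\gamma(1)}\mathcal Q$ of Theorem~\ref{morsegeodesic}, which is a priori stronger than the stated $\mathcal J^*[1]\supseteq T_{\gamma(1)}\mathcal Q$). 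You avoid both issues by using only the one-endpoint coincidence $\mathcal H^{\{\gamma,\mathcal P\}}_*=\mathcal H^{\{\gamma,\mathcal P\}}_0$ --- where $W(1)=0$ kills the boundary term --- and then rerunning the $I$-orthogonal decomposition of Theorem~\ref{twoendsindextheorem} directly with the affine spaces $\mathcal H^{\{\gamma,\mathcal P,\mathcal Q\}}_*=\mathcal H^{\{\gamma,\mathcal P\}}_*\oplus\mathcal J^*_1$ and with $\mathcal F$ on $\mathcal J^*_{\mathcal Q}$. This yields the statement exactly as given, with no need for the unstated orthogonality of $\mathcal Y(1)$ to $\mathcal Q$.
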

\begin{proof}
It is enough to observe that when $\mathcal Y$ is singular and $\mathcal Y(0)$ is orthogonal to $\mathcal P$, then $\mathcal H^{\{\gamma,\mathcal P,\mathcal Q\}}_*=\mathcal H^{\{\gamma,\mathcal P,\mathcal Q\}}_0$ (this can be shown as in the end of Proposition \ref{c1}) and also $\mathcal J_{\mathcal Q}=\mathcal J^*_{\mathcal Q}$. Moreover, as $m(\mathcal Y)=0$, $(\mathcal P,\mathcal Y)$-pseudo focal points coincide with $\mathcal P$-focal instans, therefore the thesis follows from Theorem~\ref{morsegeodesic}.
\end{proof}
The last theorem gives the Morse index theorem for timelike geodesics by taking $\mathcal Y=\dot\gamma$. Furthermore it can be used to compute the Morse index of a horizontal geodesic in a static spacetime as we will see later.
\begin{corollary}
 If the geodesic $\gamma$ admits a singular timelike Jacobi field $\mathcal Y$, then there only exists a finite number of conjugate instants along the geodesic. Moreover, if $\mathcal P$ is orthogonal  to $\mathcal Y(0)$ there only exists a finite number of $\mathcal P$-focal instants along $\gamma$.\qed
\end{corollary}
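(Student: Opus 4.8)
The plan is to obtain both assertions as degenerate instances of Theorem~\ref{morsesingulargeodesic}, choosing the final constraint submanifold $\mathcal Q$ to be a single point. First I would note that a point $\{q\}\subset M$ is a nondegenerate submanifold (the restriction of $g$ to $T_q\{q\}=\{0\}$ is vacuously nondegenerate), that every vector is orthogonal to $\{0\}$, and that the second fundamental form of a point vanishes; hence, for $\mathcal Q=\{\gamma(1)\}$ the hypothesis $\mathcal J^*[1]\supseteq T_{\gamma(1)}\mathcal Q$ of Theorem~\ref{morsesingulargeodesic} reduces to $\mathcal J^*[1]\supseteq\{0\}$, which is automatic. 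Since $\mathcal Y$ is assumed singular, the standing assumption (``$\mathcal Y(0)$ not orthogonal to $\mathcal P$, or $\mathcal Y$ singular or admissible'') is met.

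For the ``moreover'' part I would fix a nondegenerate $\mathcal P$ orthogonal to $\mathcal Y(0)$ and take $\mathcal Q=\{\gamma(1)\}$. Then $\mathcal J^*_{\mathcal Q}$ consists of the $\mathcal P$-Jacobi fields $J$ along $\gamma$ with $J(1)=0$, and on this space the bilinear form $\mathcal F$, which by the integration by parts in Subsection~\ref{twoendopoints} equals $\mathcal F(J_1,J_2)=g\big(S^{\mathcal Q}_{\dot\gamma(1)}[J_1(1)],J_2(1)\big)+g\big(J_1'(1),J_2(1)\big)$, vanishes identically, so its index is $0$. Moreover, since $\mathcal Y$ is singular we have $m(\mathcal Y)\equiv0$, hence (Definition~\ref{pseudo}) the constant $\lambda$ is forced to be $0$ and, $\mathcal P$ being orthogonal to $\mathcal Y(0)$, the $(\mathcal P,\mathcal Y)$-pseudo Jacobi fields are exactly the $\mathcal P$-Jacobi fields; thus $(\mathcal P,\mathcal Y)$-pseudo focal instants coincide with $\mathcal P$-focal instants. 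Theorem~\ref{morsesingulargeodesic} then yields that the Morse index of $I_{\{\gamma,\mathcal P\}}$ restricted to $\mathcal H^{\{\gamma,\mathcal P\}}_*\times\mathcal H^{\{\gamma,\mathcal P\}}_*$ equals precisely the number of $\mathcal P$-focal instants of $\gamma$ counted with multiplicity. This number is finite: in the singular case with $\mathcal P\perp\mathcal Y(0)$ one has $\mathcal H^{\{\gamma,\mathcal P\}}_*=\mathcal H^{\{\gamma,\mathcal P\}}_0$ (the identity $\mathcal H_*=\mathcal H_0$ being the computation that closes the proof of Proposition~\ref{c1} and is reused in Theorem~\ref{morsesingulargeodesic}), and Proposition~\ref{finiteindex} together with Lemma~\ref{lem4} show that the index form restricted to $\mathcal H^{\{\gamma,\mathcal P\}}_0$ is essentially positive, hence of finite index. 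Since every $\mathcal P$-focal instant contributes a strictly positive integer — its multiplicity — to this finite total, only finitely many of them can occur.

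The first assertion is then the special case $\mathcal P=\{\gamma(0)\}$ of the ``moreover'' part: a point is a nondegenerate submanifold with $T_{\gamma(0)}\mathcal P=\{0\}$ automatically orthogonal to $\mathcal Y(0)$, and the $\mathcal P$-Jacobi fields are exactly the Jacobi fields vanishing at $t=0$, so the $\mathcal P$-focal instants are precisely the conjugate instants of $\gamma$; finiteness then follows from the previous paragraph. I do not expect a genuine obstacle here, as the corollary only harvests the main theorem; the one point deserving care is to check that the degenerate choices of $\mathcal Q$ (and of $\mathcal P$ in the first assertion) genuinely fall within the standing hypotheses, and that $\mathcal F$ is correctly seen to vanish on $\mathcal J^*_{\mathcal Q}$, so that no leftover finite-dimensional contribution appears in the count.
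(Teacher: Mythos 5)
Your argument follows the paper's intended route exactly: you specialize Theorem~\ref{morsesingulargeodesic} to $\mathcal Q=\{\gamma(1)\}$, so that the hypothesis $\mathcal J^*[1]\supseteq T_{\gamma(1)}\mathcal Q$ is vacuous and $\mathcal F$ vanishes identically on $\mathcal J^*_{\mathcal Q}$ (all $J\in\mathcal J^*_{\mathcal Q}$ satisfy $J(1)=0$), invoke finiteness of the Morse index (essential positivity), and recover the first assertion as the case $\mathcal P=\{\gamma(0)\}$. One small imprecision worth noting: when $\mathcal Y$ is singular and $\mathcal Y(0)\perp T_{\gamma(0)}\mathcal P$, the $(\mathcal P,\mathcal Y)$-pseudo Jacobi fields are not all $\mathcal P$-Jacobi fields, but only those with $C_J=g(J',\mathcal Y)-g(J,\mathcal Y')=0$; nevertheless any $\mathcal P$-Jacobi field vanishing at some $t_0>0$ automatically has $C_J=0$ (write $\mathcal Y'=\beta\mathcal Y$ and note $u=g(J,\mathcal Y)$ solves $u'=2\beta u+C_J$ with $u(0)=u(t_0)=0$), so the pseudo focal and focal \emph{instants} do coincide, which is the fact the paper asserts and your argument actually needs.
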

\subsection{Static manifolds}
A Lorentzian manifold is said to be standard static if it can be expressed as a product $M_0\times\R$ endowed with a metric given by
\begin{equation}\label{standardmetric}
g(x,t)[(\xi,\tau),(\xi,\tau)]=g_0(x)[\xi,\xi]-\beta(x)\tau^2
\end{equation}
where $(x,t)\in M_0\times\R$, $(\xi,\tau)\in T_xM_0\times \R$, $g_0$ is a Riemannian metric in $M_0$ and $\beta$ a $C^\infty$ positive function in $M_0$. Standard static spacetimes are always stationary with Killing field given by ${\mathcal Y}=(0,1)$ and we can prove a Morse index theorem  for every horizontal geodesic (in the following horizontal will mean orthogonal to the fibers of the natural projection $\pi:M_0\times\R\to M_0$).
\begin{proposition}\label{staticcase}
Let $(M_0\times\R,g)$ be a standard static spacetime with $g$ as in \eqref{standardmetric}, $\gamma$ a horizontal geodesic in $M_0\times \R$, $\mathcal P$ a horizontal submanifold through $\gamma(0)$ and orthogonal to $\dot\gamma(0)$  and $I_{\{\gamma,\mathcal P\}}$ its related Morse index form. Then the index of $I_{\{\gamma,\mathcal P\}}$ restricted to $\Hi^{\{\gamma,\mathcal P\}}_*\times\Hi^{\{\gamma,\mathcal P\}}_*$ coincides with the number of $\mathcal P$-focal points of $\gamma$ counted with multiplicity. Moreover, this index coincides with the index of $I_{\{\gamma,\mathcal{P}\}}$ in the Riemannian manifold $M_0$.
\end{proposition}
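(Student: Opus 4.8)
The plan is to derive the first statement as a direct application of Theorem~\ref{morsesingulargeodesic} with $\mathcal Q$ reduced to the point $\gamma(1)$, and to prove the second one by identifying, through an orthogonal splitting along $\gamma$, the restricted index form with the Riemannian index form of the projected geodesic.

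First I would set up the data and check the hypotheses of Theorem~\ref{morsesingulargeodesic}. With $M=M_0\times\R$ and $g$ as in \eqref{standardmetric}, the Killing field $\mathcal Y=(0,1)$ satisfies $g(\mathcal Y,\mathcal Y)=-\beta<0$, so it is timelike. A horizontal geodesic has $g(\dot\gamma,\mathcal Y)\equiv 0$, hence it is contained in a single fibre $M_0\times\{t_0\}$; this fibre is totally geodesic and isometric to $(M_0,g_0)$, so $\gamma_0:=\pi\circ\gamma$ is a geodesic of $(M_0,g_0)$ and $\pi(\mathcal P)$ is a submanifold of $M_0$ orthogonal to $\dot\gamma_0(0)$. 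A short Koszul computation along $\gamma$ gives that $\nabla_{\dot\gamma}\mathcal Y$ is pointwise proportional to $\mathcal Y$; hence $\mathcal Y$ is \emph{singular} in the sense of Definition~\ref{thm:defadmissible} (this is the fact recalled in the Introduction for static spacetimes). Moreover $\mathcal Y(0)$ is $g$-orthogonal to the horizontal subspace $T_{\gamma(0)}\mathcal P$, and with $\mathcal Q=\{\gamma(1)\}$ one has $T_{\gamma(1)}\mathcal Q=\{0\}\subseteq\mathcal J^*[1]$. Thus Theorem~\ref{morsesingulargeodesic} applies and yields that the index of $I_{\{\gamma,\mathcal P\}}$ on $\Hi_*^{\{\gamma,\mathcal P\}}\times\Hi_*^{\{\gamma,\mathcal P\}}$ equals the number of $\mathcal P$-focal points of $\gamma$, counted with multiplicity, plus the index of $\mathcal F$ on $\mathcal J^*_{\mathcal Q}\times\mathcal J^*_{\mathcal Q}$. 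Since every $J\in\mathcal J^*_{\mathcal Q}$ vanishes at $t=1$, the identity $\mathcal F(J_1,J_2)=g\big(S^{\mathcal Q}_{\dot\gamma(1)}[J_1(1)],J_2(1)\big)+g\big(J_1'(1),J_2(1)\big)$ shows that $\mathcal F\equiv 0$ on $\mathcal J^*_{\mathcal Q}$, so this correction term is $0$; this proves the first claim.

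For the second claim I would work in a parallel orthonormal frame along $\gamma$ adapted to the $g$-orthogonal decomposition $\R^n=V_0\oplus\langle e\rangle$, where $\langle e\rangle$ is the timelike line along which $\mathcal Y$ evolves --- by Lemma~\ref{frame}, singularity of $\mathcal Y$ provides a parallel frame with $\mathcal Y(t)^\perp=V_0$ for every $t$ --- and $V_0$ is the tangent space to the fibre, on which $g$ is positive definite and coincides with $g_0$ along $\gamma_0$. Since the fibre is totally geodesic, parallel transport preserves this splitting, and, using $g$-symmetry of $R$ together with $R[e]\parallel e$, the Morse--Sturm coefficient $R(t)$ is block-diagonal with $V_0$-block $R_0(t)$ equal to the Morse--Sturm coefficient of $\gamma_0$ in $(M_0,g_0)$. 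Because $\mathcal Y$ is singular with $\mathcal Y(0)\perp\mathcal P$, one has $\Hi_*^{\{\gamma,\mathcal P\}}=\Hi_0^{\{\gamma,\mathcal P\}}$ (as at the end of the proof of Proposition~\ref{c1}); writing $V=V_0+v\,e$ with $\mathcal Y=f\,e$, the constraint $g(V',\mathcal Y)-g(V,\mathcal Y')=0$ reduces to $(v/f)'=0$, and the boundary condition $V(1)=0$ together with $f(1)\neq 0$ forces $v\equiv 0$. Hence $\Hi_*^{\{\gamma,\mathcal P\}}$ is exactly the space of $V_0$-valued $H^1$ fields with $V_0(0)\in T_{\gamma(0)}\mathcal P\subseteq V_0$ and $V_0(1)=0$ --- the variational space of $\gamma_0$ relative to $\pi(\mathcal P)$ --- and on it $I_{\{\gamma,\mathcal P\}}$ becomes $\int_0^1\big[g_0(V_0',W_0')+g_0(R_0[V_0],W_0)\big]\,\de t-g_0\big(S^{\pi(\mathcal P)}_{\dot\gamma_0(0)}[V_0(0)],W_0(0)\big)$, i.e.\ the Riemannian index form of $\gamma_0$ relative to $\pi(\mathcal P)$. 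The two indices therefore coincide, which is the second claim; combining it with the first claim and the classical Riemannian Morse index theorem, one also reads off that the $\mathcal P$-focal points of $\gamma$ are precisely the $\pi(\mathcal P)$-focal points of $\gamma_0$, with equal multiplicities.

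The step I expect to be the main obstacle is the verification that $\mathcal Y|_\gamma$ is singular, i.e.\ that $\nabla_{\dot\gamma}\mathcal Y$ stays pointwise proportional to $\mathcal Y$; this is exactly where the static (and not merely stationary) structure is used, and everything afterwards is bookkeeping with the splitting $\R^n=V_0\oplus\langle e\rangle$ and an appeal to the already-established Theorem~\ref{morsesingulargeodesic}. A minor point worth spelling out is that no $\mathcal P$-focal instant of $\gamma$ is produced by the timelike direction: the vertical component $v$ of a $\mathcal P$-Jacobi field solves $v''=(f''/f)\,v$ with $v(0)=0$, whose nonzero solution $v=c\,f(t)\!\int_0^t f(s)^{-2}\,\de s$ has no further zero in $\left]0,1\right]$, so the vertical component of any $\mathcal P$-Jacobi field vanishing at some $t_0>0$ vanishes identically.
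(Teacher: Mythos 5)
Your proof is correct, and for the first claim it follows the same route as the paper: check that $\mathcal Y=(0,1)$ is singular, verify that $\Hi_*^{\{\gamma,\mathcal P\}}=\Hi_0^{\{\gamma,\mathcal P\}}$, and apply Theorem~\ref{morsesingulargeodesic} with $\mathcal Q=\{\gamma(1)\}$ (the paper does not spell out that $\mathcal F\equiv 0$ on $\mathcal J^*_{\mathcal Q}$ when $\mathcal Q$ is a point, but this is immediate and you are right to note it). For singularity, the paper invokes Remark~\ref{tothyp} (via Lemma~\ref{frame}), whereas you argue by a direct Koszul computation showing $\nabla_{\dot\gamma}\mathcal Y\parallel\mathcal Y$; these are equivalent, with the paper's version exploiting machinery already built. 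The genuine divergence is in the second claim. The paper only identifies the $\mathcal P$-Jacobi fields of $\gamma$ in $M$ with those of $\gamma_0$ in $M_0$ (using that the slice is totally geodesic), so the equality of indices is read off from the first claim together with the classical Riemannian Morse index theorem, which is left implicit. You instead identify the restricted index form itself with the Riemannian one: the block-diagonalization of $R(t)$ with respect to the parallel splitting $V_0\oplus\langle e\rangle$, together with the observation that the constraint $(v/f)'=0$ and the boundary conditions force the vertical component $v\equiv 0$, shows $\Hi_*^{\{\gamma,\mathcal P\}}$ is exactly the Riemannian variational space and $I_{\{\gamma,\mathcal P\}}$ restricts to the Riemannian index form there. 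Your route is a bit longer but more self-contained (no appeal to the Riemannian index theorem for the second claim), and it yields the identification of focal points with multiplicities as a corollary rather than as an ingredient; the paper's route is shorter because it uses that external input.
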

\begin{proof}
 A horizontal geodesic $[0,1]\ni s\to(x(s),t_0)\in M_0\times\R$ is always contained in the totally geodesic hypersurface $M_0\times \{t_0\}$, so that by Remark~\ref{tothyp} we conclude that  $\mathcal Y=(0,1)$ is singular. By applying Theorem \ref{morsesingulargeodesic}, the first part of the thesis follows. For the last part, it is enough to observe that $\mathcal P\subset M_0\times \{t_0\}$ and $\mathcal P$-Jacobi fields coincide with the $\mathcal P$-Jacobi fields of $\gamma$ in $M_0$. In order to see this, we observe that $g(J'(0),\mathcal Y(0))-g(J(0),\mathcal Y'(0))=0$ and as $J(0)$ is tangent to $\mathcal P$ and $\mathcal Y'(0)$ linearly dependent with $\mathcal Y(0)$ we deduce  that $J'(0)$ is tangent to $M_0\times \{t_0\}$. As $J(0)$ and $J'(0)$ are tangent to $M_0\times\{t_0\}$ and this submanifold is totally geodesic, it can be deduced that $J$ is also a $\mathcal P$-Jacobi field in $M_0\cong M_0\times \{t_0\}$.
\end{proof}
\end{section}

\begin{section}{Evolution of the index functions and the distribution of focal and pseudo focal points}
\label{sec:distribution}
Let us finally discuss the question of distribution of focal points along a
geodesic in a stationary Lorentzian manifold. Let us consider the geometrical
setup introduced in Subsection~\ref{sub:geomsetup}; we will consider the only interesting case of
a \emph{spacelike} geodesic $\gamma$.
Let us use the following notations: for all $t\in\left]0,1\right]$,
we will denote by $\mu(t)$ the index of the bilinear form:
\[I_t(V,W)=\int_0^tg(V',W')+g\big(R(\dot\gamma,V)\dot\gamma,W\big)\,\mathrm ds-g\big(S^{\mathcal P}_{\dot\gamma(0)}[V(0)],W(0)\big)\]
on the space $\mathcal H_t$ of $H^1$-vector fields $V$ along $\gamma\vert_{[0,t]}$ satisfying
$V(0)\in T_{\gamma(0)}\mathcal P$, $V(t)=0$ and $g(V',\mathcal Y)-g(V,\mathcal Y')=C_V$ a. e. on $[0,t]$. 
In the following we will asume that $\mathcal Y(0)$ is not orthogonal to $\mathcal P$ or that $\mathcal Y$ 
is singular or admissible.
By $\mu_0(t)$ we will denote the index of $I_t$ on the one-codimensional subspace $\mathcal H_t^0$
of $\mathcal H_t$ consisting of vector fields $V$ for which the constant $C_V$ vanishes.
The functions $\mu$ and $\mu_0$ give us information on the distribution of focal and
pseudo focal instants.
It is worth recalling that a $\mathcal P$-focal instant $t_0\in\left]0,1\right]$ along $\gamma$
is said to be \emph{nondegenerate} if the restriction of the metric $g$ to the space:
\[\mathbb J[t_0]=\big\{J'(t_0):\text{$J$ is a $\mathcal P$-Jacobi field and $J(t_0)=0$}\big\}\]
is nondegenerate. Nondegenerate $\mathcal P$-focal instants are isolated in the set of
all $\mathcal P$-focal instants.
Using the theory developed in this paper and some results in the recent literature, we can now summarize
a few facts about the distribution of focal and pseudo focal instants along a geodesic.
\begin{itemize}
\item[(a)] $0\le\mu(t)-\mu_0(t)\le 1$ for all $t\in\left]0,1\right]$. This follows from the fact
that the subspace $H_t^0\subset H_t$ has codimension $1$.
\smallskip

\item[(b)] The function $\mu_0$ is nondecreasing in $\left]0,1\right]$. This follows from the
fact that, for $t_0<t_1$ one has an injection $H_{t_0}^0\hookrightarrow H_{t_1}^0$ given
by extension to $0$ on $[t_0,t_1]$, and that an $I_{t_0}$-negative subspace of
$H_{t_0}^0$ is mapped by such injection onto an $I_{t_1}$-negative subspace of $H_{t_1}^0$.
\smallskip

\item[(c)] We cannot establish whether the function $\mu$ is nondecreasing. Note that there is no
natural injection $H_{t_0}\hookrightarrow H_{t_1}$ that preserves (or decreases the values of)
the index form, as in (b). Extension of vector fields to $0$ is
not allowed here, because of the constant $g(V',\mathcal Y)-g(V,\mathcal Y')$ is not zero.
\smallskip

\item[(d)] $\mu(t)$ is equal to the $\mathcal P$-Maslov index of $\gamma\vert_{[0,t]}$,
as proved in \cite{GiaMasPic01}.
\smallskip

\item[(e)] An instant $t_0\in\left]0,1\right[$ is a jump instant for the function $\mu_0$ if and only
if it is a $(\mathcal P,\mathcal Y)$-pseudo focal instant. This follows from the Morse index theorem (Theorem~\ref{morsegeodesic});
the value of the jump is precisely the multiplicity $\mathrm{mul}_0(t_0)$
of $t_0$ as a pseudo focal instant.
In particular, $\mu_0$ is constant on every interval that does not contain pseudo focal instants.
\smallskip

\item[(f)] If an instant $t_0\in\left]0,1\right[$ is a jump instant for $\mu$, then $t_0$ is a focal instant
along $\gamma$. This follows from the main result of \cite{GiaMasPic01}, since the $\mathcal P$-Maslov
of $\gamma$ has jumps only at the focal instants. Thus, $\mu$ is constant on every interval that does
not contain focal instants. The contribution to the index function $\mu$ given by a nondegenerate
$\mathcal P$-focal instant is given by the signature of the restriction of $g$ to the space
$\mathbb J[t_0]$. It is not known whether in the stationary case such contribution may be null or negative.
\smallskip

\item[(g)] The set of focal instants is closed, and contained in $\left]\varepsilon,1\right]$ for some
$\varepsilon>0$.
\smallskip

\item[(h)] We cannot establish whether all focal instants give a contribution to $\mu$.
Let us call \emph{effective} those focal instants that do determine a jump of the function $\mu$.
The jump of the function $\mu$ at an effective focal instant $t_0\in\left]0,1\right[$
is in absolute value less than or equal to the multiplicity $\mathrm{mul}(t_0)$ of $t_0$ as a focal instant.
Note that the first effective focal instant $t_0\in\left]0,1\right]$ must give a positive contribution
to $\mu$, because $\mu\ge0$.
\smallskip

\item[(i)] If $\mathrm{mul}(t_0)>1$, then $\mathrm{mul}_0(t_0)>0$, i.e., a focal point of multiplicity
larger than $1$ is pseudo focal. More precisely, $\mathrm{mul}_0(t_0)\ge\mathrm{mul}(t_0)-1$.
This follows from the fact that the space of $\mathcal P$-Jacobi fields
$J$ along $\gamma$ vanishing at $0$ and satisfying $C_J=g(J',\mathcal Y)-g(J,\mathcal Y')=0$ form a subspace of codimension $1$ of
the space of all $\mathcal P$-Jacobi fields  along $\gamma$ vanishing at $t_0$. This implies
that focal points with multiplicity larger than one do not accumulate.
\smallskip

\item[(j)] If $\mathrm{mul}_0(t_0)>1$, then $\mathrm{mul}(t_0)>0$, i.e., pseudo focal instants with
multiplicity larger than $1$ are focal. More precisely, $\mathrm{mul}(t_0)\ge\mathrm{mul}_0(t_0)-1$.
Namely, assume $\mathrm{mul}_0(t_0)=k>1$ and let $J_1,\ldots,J_k$ be a basis of $(\mathcal P,\mathcal Y)$-pseudo
Jacobi fields satisfying $J_s(t_0)=0$, with $J_s''-R[J_s]=\lambda_s m(Y)$
for all $s=1,\ldots,k$. Assume $\lambda_k\ne0$ (if all the $\lambda_s$ vanish, then all the $J_s$ are
$\mathcal P$-Jacobi fields vanishing at $t_0$, hence $\mathrm{mul}(t_0)\ge k$).
Then, the vector fields
$W_s=\lambda_kJ_s-\lambda_sJ_k$, $s=1,\ldots,k-1$, are linearly independent $\mathcal P$-Jacobi fields
vanishing at $t_0$, thus $\mathrm{mul}(t_0)\ge k-1$.
\smallskip

\item[(k)] If $t_1<t_2<1$ are pseudo focal instants, then there exists one (effective)
focal instant in the interval $[t_1,t_2]$. Namely, if there were no effective focal instant
in $[t_1,t_2]$, then $\lim\limits_{t\to t_2^+}\mu_0(t)\ge2+\lim\limits_{t\to t_1^-}\mu_0(t)\ge
1+\limsup\limits_{t\to t_1^-}\mu(t)=1+\liminf\limits_{t\to t_2^+}\mu(t)$, which gives a contradiction with
the inequality $\mu_0(t)\le\mu(t)$.
\smallskip

\item[(l)] It is not clear whether effective focal instants are isolated.
However, if $t_1<t_2\le1$ are \emph{consecutive} focal instants, i.e., there is no focal instant in
$\left]t_1,t_2\right[$, and if they give a positive contribution to $\mu$, then there exists one
pseudo focal instant in the interval $[t_1,t_2]$. Namely, if there were no pseudo focal instant
in $[t_1,t_2]$, then $\liminf\limits_{t\to t_2^+}\mu(t)\ge2+\limsup\limits_{t\to t_1^-}\mu(t)\ge
2+\lim\limits_{t\to t_1^-}\mu_0(t)=2+\lim\limits_{t\to t_2^+}\mu_0(t)$, which gives a contradiction with
the inequality $\mu(t)\le\mu_0(t)+1$. 
\smallskip

\item[(m)] If $t_0\in\left]0,1\right]$ is the first pseudo focal point, then there exists
an effective focal instant $t_1\in\left]0,t_0\right]$ that gives a positive contribution to $\mu$.
For, otherwise it would be $\lim\limits_{t\to t_0^+}\mu_0(t)=
\mathrm{mul}_0(t_0)>0=\liminf\limits_{t\to t_0^+}\mu(t)$, which contradicts the inequality $\mu_0(t)\le\mu(t)$.
\end{itemize}
\end{section}

\appendix

\begin{section}{Continuity and weak continuity of  families of subspaces}
\label{app:A}
We will discuss here a simple result showing that the abstract Morse index theorem discussed in
this paper and a similar result by Uhlenbeck (see \cite[Theorem 1.11]{Uhl73}) are in fact independent.
Recall that in \cite[Theorem 1.11]{Uhl73} it is considered an increasing family of
closed subspaces of a Hilbert space, satisfying the assumption
below.

Let $H$ be a Hilbert space and let $(H_s)_{s\in[a,b]}$ be a family of closed
subspaces of $H$; denote by $P_s:H\to H$ the orthogonal projection onto $H_s$.
We say that $(H_s)$ is \emph{continuous} if the map $s\mapsto P_s\in {\mathcal L}(H)$
is continuous in the operator norm topology.
A \emph{weaker} notion of continuity can be introduced by considering the \emph{strong operator
topology} (SOT) of ${\mathcal L}(H)$. Recall that this topology is the locally convex topology defined
by the family of semi-norms $T\mapsto\Vert T\xi\Vert$, where $\xi\in H$;
in other words, a net $T_\alpha$ converges to $T$ in the SOT if $T_\alpha\xi\to T\xi$ for
all $\xi\in H$. We say that $(H_s)$ is \emph{weakly continuous} if $s\mapsto P_s$ is
SOT-continuous.
\begin{lemma}\label{thm:lemcontinuity}
Assume that the family $(H_s)$ is non decreasing, i.e., $H_s\subset H_t$ when $s\le t$.
Then:
\begin{enumerate}
\item\label{itm:uno} $(H_s)$ is continuous if and only if it is constant.
\item\label{itm:due} $(H_s)$ is weakly continuous if and only if the following holds:
\begin{equation}
\label{eq:equivweakcont}
\overline{\bigcup_{s<t}H_s}=H_t\quad\forall\,t>a,\qquad\text{and}\qquad H_t=\bigcap_{s>t}H_s,\quad\forall\,t<b.
\end{equation}
\end{enumerate}
\end{lemma}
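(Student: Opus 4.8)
The plan is to treat the two equivalences separately, using throughout that monotonicity makes $P_t-P_s$ (for $s\le t$) equal to the orthogonal projection onto $H_t\ominus H_s:=H_t\cap H_s^{\perp}$.

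For part~\eqref{itm:uno}, one direction is immediate: if the family is constant then $P_s$ does not vary. For the converse I would observe that, since $P_t-P_s$ is an orthogonal projection whenever $s\le t$, its operator norm is $1$ when $H_s\subsetneq H_t$ and $0$ when $H_s=H_t$. Hence $s\mapsto P_s$ takes its values in a subset of $\mathcal L(H)$ in which any two distinct points lie at distance exactly $1$; such a subset is discrete in the subspace metric, and a norm-continuous map from the connected interval $[a,b]$ into a discrete metric space is constant, so $H_s$ does not depend on $s$.

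For part~\eqref{itm:due}, the core input is the classical fact that a bounded monotone net of self-adjoint operators converges in the strong operator topology to its supremum (resp.\ infimum). Applying this to the increasing net $(P_s)_{s<t}$ and to the increasing net $(I-P_s)_{s>t}$ shows that the one-sided strong limits $\lim_{s\uparrow t}P_s$ and $\lim_{s\downarrow t}P_s$ exist (for $t>a$, resp.\ $t<b$) and equal, respectively, the orthogonal projection onto $\overline{\bigcup_{s<t}H_s}$ and onto $\bigcap_{s>t}H_s$. I would include the short verification of the identification of these limits: the net $(P_s)_{s<t}$ converges pointwise on the dense subspace $\big(\bigcup_{s<t}H_s\big)+\big(\overline{\bigcup_{s<t}H_s}\big)^{\perp}$ — to the identity on the first summand and to $0$ on the second — and the uniform bound $\|P_s\|\le 1$ promotes this, by a standard $3\varepsilon$ argument, to strong convergence on all of $H$ with the asserted limit; the intersection statement is the dual one obtained from $I-P_s$.

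Granting this, SOT-continuity of $s\mapsto P_s$ at an interior point $t\in\left]a,b\right[$ is equivalent to $\lim_{s\uparrow t}P_s=P_t=\lim_{s\downarrow t}P_s$, i.e.\ to $\overline{\bigcup_{s<t}H_s}=H_t$ and $\bigcap_{s>t}H_s=H_t$; at $t=a$ only the right-hand limit is relevant and at $t=b$ only the left-hand one. Quantifying over all $t$, weak continuity on $[a,b]$ is thus equivalent to requiring $\overline{\bigcup_{s<t}H_s}=H_t$ for every $t>a$ together with $H_t=\bigcap_{s>t}H_s$ for every $t<b$, which is exactly \eqref{eq:equivweakcont}. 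I expect the only genuinely substantive point to be the strong convergence of the monotone net of projections and the identification of its limit with the projection onto the closed union (resp.\ the intersection); the remaining steps are bookkeeping with monotonicity and connectedness.
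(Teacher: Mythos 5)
Your argument is correct, and part~\eqref{itm:uno} is essentially the paper's own: both observe that for $s\le t$ the difference $P_t-P_s$ is the orthogonal projection onto $H_t\cap H_s^{\perp}$, hence has norm $1$ unless $H_s=H_t$, so that the range of $s\mapsto P_s$ is a discrete set and continuity forces constancy.

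For part~\eqref{itm:due} you take a genuinely different route. The paper argues each implication by hand: assuming \eqref{eq:equivweakcont}, it fixes $\xi\in H_t$ (resp.\ $\xi\in H_t^{\perp}$) and proves $\lim_{s\uparrow t}P_s\xi=\xi$ via an $\varepsilon$-approximation from $\bigcup_{s<t}H_s$, then gets right-continuity by passing to the non-increasing family $K_s=H_s^{\perp}$ and the projections $Q_s=1-P_s$; the converse implication is obtained by testing on $\xi\in H_t$ and again dualizing. You instead invoke the monotone convergence theorem for bounded monotone nets of self-adjoint operators to conclude, without any hypothesis beyond monotonicity, that the one-sided SOT-limits $\lim_{s\uparrow t}P_s$ and $\lim_{s\downarrow t}P_s$ always exist and are orthogonal projections, then identify them as the projections onto $\overline{\bigcup_{s<t}H_s}$ and $\bigcap_{s>t}H_s$ respectively (your density-plus-uniform-bound verification of this identification is fine, though one could also extract it directly from the characterization of the supremum/infimum of a family of commuting projections). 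With the one-sided limits in hand, SOT-continuity at $t$ is exactly the statement that both equal $P_t$, which translates verbatim into \eqref{eq:equivweakcont} once one notes, as you do, that only the right-hand limit is relevant at $t=a$ and only the left-hand one at $t=b$. What your approach buys is a cleaner ``iff'': the existence of the one-sided limits is settled once and for all, and the equivalence reduces to identifying them, rather than running two separate implications. What the paper's approach buys is self-containedness — it never appeals to the general monotone convergence theorem, re-deriving the needed convergence by elementary estimates in this specific case, and the duality $K_s=H_s^{\perp}$ is used systematically to halve the work.
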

\begin{proof}
If $H_s\subset H_t$, then $P_t-P_s$ is the orthogonal projection onto the space $H_s^\perp\cap H_t$.
If $H_s\ne H_t$, then $H_s^\perp\cap H_t\ne\{0\}$, thus $\Vert P_t-P_s\Vert=1$, and $(H_s)$ can
only be continuous if $H_s=H_t$ for all $s,t$, which proves \eqref{itm:uno}.

In order to prove \eqref{itm:due}, assume that \eqref{eq:equivweakcont} holds, and fix $t>a$. Since the family $(H_s)$ is
nondecreasing, it is easy to see that, given $\xi\in H_t^\perp$, then $\lim\limits_{s\to t^-}P_s\xi=0=P_t\xi$.
If $\xi\in H_t$, then by the first equality in \eqref{eq:equivweakcont} for all $r<t$ there exists $\xi_r\in H_r$ such that $\lim\limits_{r\to t^-}\xi_r=\xi$.
Choose arbitrary $\varepsilon>0$ and let $r_0<t$ be such that $\Vert\xi_{r_0}-\xi\Vert<\varepsilon$;
then, for all $s\in\left[r_0,t\right[$ one has $P_s\xi_{r_0}=\xi_{r_0}$, and therefore:
\[\Vert P_s\xi-\xi\Vert\le\Vert P_s\xi-P_s\xi_{r_0}\Vert+\Vert P_s\xi_{r_0}-\xi\Vert\le\Vert P_s\Vert\Vert\xi_{r_0}-\xi\Vert+\Vert\xi_{r_0}-\xi\Vert<2\varepsilon.\]
This shows that $\lim\limits_{s\to t^-}P_s\xi=P_t\xi=\xi$,
and we have thus proven that for a nondecreasing family, the first equality in \eqref{eq:equivweakcont} implies the SOT left-continuity
of $P_s$.

Consider now the family $(K_s)$ of closed subspaces of $H$ given by $K_s=H_s^\perp$; this is a non increasing family of
subspaces, and the second equality in \eqref{eq:equivweakcont} is equivalent\footnote{The two equalities in \eqref{eq:equivweakcont}
are \emph{dual} to each other with respect to the operation of taking orthogonal complements.} to $\overline{\bigcup_{s>t}K_s}=K_t$.
By a totally analogous argument, the family of orthogonal projections $Q_s=1-P_s$ onto $K_s$ is SOT \emph{right-}continuous;
thus, $P_s$ is also right continuous.

Conversely, assume that $P_s$ is SOT continuous at $t$. Then, for all $\xi\in H_t$, $\lim\limits_{s\to t^-}P_s\xi=P_t\xi=\xi$.
Set $\xi_s=P_s\xi\in H_s$, so that $\lim\limits_{s\to t^-}\xi_s=\xi$, hence the first equality in \eqref{eq:equivweakcont} holds.
By duality, the SOT continuity of the projections $Q_s=1-P_s$ implies that also the second equality in \eqref{eq:equivweakcont} holds.
\end{proof}
\end{section}

\end{document}